\documentclass[11pt,a4paper]{amsart}
\usepackage{amsfonts}
\usepackage{amsthm}
\usepackage{amsmath}
\usepackage{amscd}
\usepackage{amssymb}
\usepackage{t1enc}
\usepackage[mathscr]{eucal}
\usepackage{indentfirst}
\usepackage{graphicx}
\usepackage{graphics}
\usepackage{pict2e}
\usepackage{epic}
\usepackage{tikz-cd}

\usepackage[style = alphabetic]{biblatex}
\renewbibmacro{in:}{}
\addbibresource{references.bib}

\numberwithin{equation}{section}
\usepackage[margin=2.9cm]{geometry}
\usepackage{epstopdf}
\usepackage{comment}
\usepackage[colorinlistoftodos]{todonotes}
\usepackage[toc,page]{appendix}
\usepackage{mathtools}
\usepackage{physics}

 \definecolor{winered}{rgb}{0.5,0,0}
 \usepackage{hyperref}
 \hypersetup{citecolor = winered, linkcolor = winered, menucolor = winered, colorlinks = true}

\newtheorem{Thm}{Theorem}[section]

\newtheorem{Cor}[Thm]{Corollary}
\newtheorem{Prop}[Thm]{Proposition}

\theoremstyle{definition}
\newtheorem{Def}[Thm]{Definition}
\newtheorem{Rem}[Thm]{Remark}
\newtheorem{Ex}[Thm]{Example}
\newtheorem{Cons}[Thm]{Construction}
\newtheorem{Prob}{Problem}

\newcommand{\Z}{\mathbb{Z}}

\newcommand{\conj}{\overline}

\newcommand{\cat}{\mathsf}

\newcommand{\colim}{\text{colim }}

\RequirePackage{tikz-cd}
\RequirePackage{amssymb}
\usetikzlibrary{calc}
\usetikzlibrary{decorations.pathmorphing}

\tikzset{curve/.style={settings={#1},to path={(\tikztostart)
    .. controls ($(\tikztostart)!\pv{pos}!(\tikztotarget)!\pv{height}!270:(\tikztotarget)$)
    and ($(\tikztostart)!1-\pv{pos}!(\tikztotarget)!\pv{height}!270:(\tikztotarget)$)
    .. (\tikztotarget)\tikztonodes}},
    settings/.code={\tikzset{quiver/.cd,#1}
        \def\pv##1{\pgfkeysvalueof{/tikz/quiver/##1}}},
    quiver/.cd,pos/.initial=0.35,height/.initial=0}

\tikzset{tail reversed/.code={\pgfsetarrowsstart{tikzcd to}}}
\tikzset{2tail/.code={\pgfsetarrowsstart{Implies[reversed]}}}
\tikzset{2tail reversed/.code={\pgfsetarrowsstart{Implies}}}
\tikzset{no body/.style={/tikz/dash pattern=on 0 off 1mm}}

\title{Categorical models for path spaces}

\author[E. Minichiello]{Emilio Minichiello}
\address{E.M., Department of Mathematics, CUNY Graduate Center}
\email{\href{mailto:eminichiello@gradcenter.cuny.edu}{eminichiello@gradcenter.cuny.edu}}

\author[M. Rivera]{Manuel Rivera}
\address{M.R., Department of Mathematics, Purdue University}
\email{\href{mailto:manuelr@purdue.edu}{manuelr@purdue.edu}}

\author[M. Zeinalian]{Mahmoud Zeinalian}
\address{M.Z., Department of Mathematics, Lehman College of CUNY}
\email{\href{mailto:mahmoud.zeinalian@lehman.cuny.edu}{mahmoud.zeinalian@lehman.cuny.edu}}

\begin{document}

\maketitle

\begin{abstract}
    
%
We establish an explicit comparison between two constructions in homotopy theory: the left adjoint of the homotopy coherent nerve functor, also known as the rigidification functor, and the Kan loop groupoid functor. This is achieved by considering localizations of the rigidification functor, unraveling a construction of Hinich, and using a sequence of operators originally introduced by Szczarba in 1961. As a result, we obtain several combinatorial models for the path category of a simplicial set. We then pass to the chain-level and describe a model for the path category, now considered as a category enriched over differential graded (dg) coalgebras, in terms of a suitable algebraic chain model for the underlying simplicial set. This is achieved through a version of the cobar functor inspired by Lazarev and Holstein's categorical Koszul duality. As a consequence, we obtain a conceptual explanation of a result of Franz stating that there is a natural dg bialgebra quasi-isomorphism from the extended cobar construction on the chains of a reduced simplicial set to the chains on its Kan loop group.
\end{abstract}

\tableofcontents

\section{Introduction}

Any topological space $Y$ may be regarded as a category $\mathcal{P}(Y)$ enriched over topological spaces whose morphisms are invertible up to homotopy. The objects of $\mathcal{P}(Y)$ are the points of $Y$ and given any two points $a,b\in Y$ the corresponding mapping space is the set
 $$\mathcal{P}(Y)(a,b)=\{ \gamma: [0,r] \to X : r \in \mathbb{R}_{\geq 0}, \gamma \text{ is continuous, and }\gamma(0)=a, \gamma (r)=b \}$$ equipped with the compact-open topology. Composition is defined by concatenation of paths and the identity morphisms are given by constant paths with $r=0$. This gives rise to a functor $$\mathcal{P}: \cat{Top} \to \cat{Cat}_{\cat{Top}}$$ from the category of topological spaces to the category of topological categories (categories enriched over the monoidal category of topological spaces) called the \textit{path category} functor. For any point $b \in Y$, the topological monoid of endomorphisms $\mathcal{P}(Y)(b,b)$ is Moore's model for the based loop space of $Y$ at $b$. We also have a simplicial version of $\mathcal{P}$ defined as the composition 
 $$\mathbb{P}: \cat{sSet} \xrightarrow{|\cdot|} \cat{Top} \xrightarrow{\mathcal{P}} \cat{Cat}_{\cat{Top}} \xrightarrow{\text{Sing}} \cat{Cat}_{\cat{sSet}}.$$
Here $|\cdot|: \cat{sSet}\to \cat{Top}$ denotes the geometric realization functor from the category of simplicial sets to the category of topological spaces and $\text{Sing}: \cat{Cat}_{\cat{Top}} \to \cat{Cat}_{\cat{sSet}}$ the functor from topological categories to simplicial categories (categories enriched over the monoidal category of simplicial sets) that applies the singular complex functor at the level of mapping spaces. 

The functor $\mathbb{P}$ sends weak homotopy equivalences of simplicial sets to weak equivalences of simplicial categories. Recall that a map of simplicial categories is called a weak equivalence if it induces an equivalence on homotopy categories and a weak homotopy equivalence on all simplicial sets of morphisms. Furthermore, any simplicial set  $X$ may be recovered, up to weak homotopy equivalence, by applying a homotopy coherent version of the nerve functor to $\mathbb{P}(X)$.

In the present article, we study and compare different models for the functor $\mathbb{P}$. We introduce some notation before stating our main results. Denote by $F: \cat{Quiv} \to \cat{Cat}$ the functor from the category of quivers to the category of categories that sends a quiver (also known as a reflexive directed graph) to the category freely generated by it.  We also have a functor $j: \cat{Quiv} \to \cat{sSet},$ that sends a quiver to its associated $1$-skeletal simplicial set. The right adjoint of $j$ is the functor
$\mathcal{Q}: \cat{sSet} \to \cat{Quiv}$ that sends a simplicial set to the quiver determined by its $1$-skeleton. Denote by $i: \cat{Gpd} \to \cat{Cat}$ the fully faithful embedding from groupoids to categories and by $L:\cat{Cat} \to \cat{Gpd}$ its left adjoint, called the localization functor. We are interested in solutions to the following.

\begin{Prob} Construct a functor  $\mathbb{F}: \cat{sSet} \to \cat{Cat}_{\cat{sSet}}$ that fits into a commutative diagram
\begin{equation} \label{1-loc}
    \begin{tikzcd}
{\cat{Quiv}} \arrow[d, "j"] \arrow[r, "i \circ L \circ F"]  & {\cat{Cat}} \arrow[d, "\tau "] \\
\cat{sSet} \arrow[r, "\mathbb{F}"]  & {\cat{Cat}_{\cat{sSet}}}
\end{tikzcd}
\end{equation}
and such that, for any simplicial set $X$, the simplicial categories $\mathbb{F}(X)$ and $\mathbb{P}(X)$ are naturally weakly equivalent. The vertical right functor $\tau$ sends an ordinary category to the simplicial category obtained by considering sets of morphisms as discrete simplicial sets. The commutativity of diagram \ref{1-loc} says that $\mathbb{F}$ inverts (strictly) every $1$-simplex.
\end{Prob}
Our motivation for finding solutions to the above problem is to obtain small and tractable combinatorial and algebraic models for the path category that are useful in geometric contexts. The main idea is to use ordered sequences of simplices to construct higher dimensional mapping spaces while localizing the smallest set of morphisms necessary to obtain the correct homotopy type. This may be achieved through different combinatorial constructions.

In \cite{dwyer_kan_1984}, Dwyer and Kan studied a classifying space functor $$\overline{W}: \cat{Cat}_{\cat{sSet}} \to \cat{sSet}.$$ Denote the left adjoint of $\overline{W}$ by $$G: \cat{sSet} \to \cat{Cat}_{\cat{sSet}}.$$ Dwyer and Kan were mainly interested in the composition $$ G^{Kan}=\mathcal{L} \circ G:\cat{sSet}\to \cat{Gpd}_{\cat{sSet}}$$ where $\mathcal{L}: \cat{Cat}_{\cat{sSet}} \to \cat{Gpd}_{\cat{sSet}}$ is the functor from simplicial categories to simplicial groupoids obtained by applying the ordinary localization functor $L$ at each simplicial degree. The restriction of $G^{Kan}$ to the category of $0$-reduced simplicial sets is known in the literature as the \textit{Kan loop group functor}. Dwyer and Kan show that the functor $G^{Kan}$ together with its right adjoint $\overline{W}$ relate two models for the homotopy theory of homotopy types. We will denote by $$\mathbb{G}: \cat{sSet} \to \cat{Cat}_{\cat{sSet}}$$ the composition of functors $\mathbb{G}= \iota \circ G^{Kan}$, where $\iota: \cat{Gpd}_{\cat{sSet}} \to \cat{Cat_{sSet}}$ is the inclusion functor.

Another related, but combinatorially different construction is the functor $$\mathfrak{C}: \cat{sSet} \to \cat{Cat}_{\cat{sSet}},$$ called the \textit{rigidification functor}, which is left adjoint to Cordier's \textit{homotopy coherent nerve functor} $$ \mathfrak{N}: \cat{Cat}_{\cat{sSet}} \to \cat{sSet}.$$ The rigidification functor has been studied in detail in \cite{lurie2006higher} and \cite{DuSp11}, where the adjunction $(\mathfrak{C}, \mathfrak{N})$ is used to relate two models for the homotopy theory of $\infty$-categories. In the present article, we consider the following two localizations of $\mathfrak{C}$.
\begin{enumerate}
\item  Localize all $0$-morphisms to obtain a functor $$\widehat{\mathfrak{C}}: \cat{sSet} \to \cat{Cat}_{\cat{sSet}}$$ given by $$X \mapsto \mathfrak{C}(X)[\mathfrak{C}(X^1)^{-1}],$$ where $X^1$ denotes the $1$-skeleton of $X$.
\item Localize all morphisms to obtain a functor $$\mathbb{C}: \cat{sSet} \to \cat{Cat}_{\cat{sSet}}$$
given by
$$ X\mapsto \iota(\mathcal{L}( \mathfrak{C}(X)))= \iota( \mathfrak{C}(X)[\mathfrak{C}(X)^{-1}]).$$
\end{enumerate}
It follows from the fact that $\mathfrak{C}(X)$ is a cofibrant simplicial category that the natural map $$\mu_X: \widehat{\mathfrak{C}}(X) \to \mathbb{C}(X)$$ is a weak equivalence of simplicial categories 
Our first result provides an explicit comparison between $\mathfrak{C}$ and $G^{Kan}$. 

\begin{Thm}\label{theorem1} There is a natural transformation of functors $Sz: \mathfrak{C} \xRightarrow{ } G$ inducing a weak equivalence of simplicial groupoids
$$\mathcal{L}Sz_X: \mathcal{L}\mathfrak{C}(X) \xrightarrow{\simeq} G^{Kan}(X)$$ for any simplicial set $X$. Furthermore, the three functors $\widehat{\mathfrak{C}}$, $\mathbb{C}$, and $\mathbb{G}$ are all solutions to Problem 1.
\end{Thm}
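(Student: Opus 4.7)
The plan is to construct the natural transformation $Sz$ combinatorially using Szczarba's operators, verify that $\mathcal{L}Sz_X$ is a weak equivalence by reducing to a comparison of known models for the based loop space, and then combine this with the already-noted weak equivalence $\mu_X : \widehat{\mathfrak{C}}(X) \to \mathbb{C}(X)$ to conclude that all three candidate functors solve Problem 1.

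For the construction of $Sz$, I would first recall the explicit combinatorial descriptions of both sides. For $\mathfrak{C}(X)$, I would use the necklace presentation of Dugger--Spivak: an $n$-simplex in $\mathfrak{C}(X)(a,b)$ corresponds to an ordered sequence of simplices of $X$ going from $a$ to $b$ equipped with a flag of subsets of vertices. For $G(X)$, the $n$-simplices of $G(X)(a,b)$ are freely generated (modulo degeneracy relations) by certain $(n+1)$-simplices of $X$. Szczarba's operators, originally introduced in 1961 to construct a twisting cochain from the cobar of the chains of a simplicial set to the chains on its Kan loop group, provide a combinatorial rule turning necklace data into iterated products of such free generators. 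I would use this rule to define $Sz_X$ on morphism simplicial sets, then verify naturality in $X$, compatibility with face and degeneracy operators, preservation of identities, and preservation of composition. The last verification is the most labor-intensive part and amounts to a careful unraveling of Szczarba's original combinatorial identities in the language of necklaces.

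To prove that $\mathcal{L}Sz_X$ is a weak equivalence of simplicial groupoids, I would first note that $Sz_X$ is a bijection on objects (both sides have object set $X_0$), so it suffices to show that for each pair $a,b \in X_0$ the induced map on mapping simplicial sets is a weak homotopy equivalence. Passing to a connected component of $X$ and choosing a basepoint reduces to the based case: both $\mathcal{L}\mathfrak{C}(X)(a,a)$ and $G^{Kan}(X)(a,a)$ are models for the based loop space $\Omega |X|$, the former via the fact that $\mathfrak{C}$ is cofibrant in the Bergner model structure together with the extra localization of $1$-simplices, and the latter via Kan's classical theorem. Isomorphism on $\pi_0$ follows directly from how $Sz$ acts on $1$- and $2$-cells, since both sides present the fundamental groupoid of $|X|$ by the same generators and relations; higher homotopy then follows because both $\mathcal{L} \mathfrak{C}$ and $G^{Kan}$ are left adjoints in Quillen equivalences with simplicial sets, and the induced maps on homotopy categories agree.

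Finally, that each of $\widehat{\mathfrak{C}}, \mathbb{C}, \mathbb{G}$ solves Problem 1 follows quickly. For commutativity of the square: if $X = j(Q)$ is $1$-skeletal, then $\mathfrak{C}(X)$ is freely generated from $1$-simplices, so inverting either the $1$-skeleton (in $\widehat{\mathfrak{C}}$) or everything (in $\mathbb{C}$) produces the free simplicial groupoid on $Q$, which coincides with $\tau \circ i \circ L \circ F (Q)$; the corresponding identification for $\mathbb{G}$ is the classical Dwyer--Kan computation. Weak equivalence with $\mathbb{P}$ then follows from the chain of natural weak equivalences $\widehat{\mathfrak{C}}(X) \xrightarrow{\mu_X} \mathbb{C}(X) \xrightarrow{\mathcal{L}Sz_X} \mathbb{G}(X) \simeq \mathbb{P}(X)$, where the last equivalence is Dwyer--Kan. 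The main obstacle in the plan is step one: elevating Szczarba's combinatorial operators to a strict morphism in $\cat{Cat}_{\cat{sSet}}$ compatible with composition and the simplicial structure requires careful bookkeeping across all relevant data of both constructions.
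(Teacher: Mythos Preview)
Your proposal has the right overall architecture but contains a genuine gap in the central step, and differs from the paper in how $Sz$ is constructed.

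\textbf{Construction of $Sz$.} You propose defining $Sz_X$ directly on mapping spaces via the necklace presentation and then verifying compatibility with composition by hand. The paper instead works cosimplicially: it defines $Sz_{\Delta^n}:\mathfrak{C}(\Delta^n)\to G(\Delta^n)$ via Szczarba's operators and then observes that both $\mathfrak{C}(\Delta^n)$ and $G(\Delta^n)$ are nerves of \emph{poset}-enriched categories, so that $Sz_{\Delta^n}=N^{\cat{Cat}}(\Phi)$ for an easily described poset map $\Phi$. Compatibility with composition is then automatic, and one only checks the cosimplicial identities. This avoids exactly the ``careful bookkeeping'' you flag as the main obstacle; your direct approach could in principle succeed but is strictly harder.

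\textbf{The gap: why $\mathcal{L}Sz_X$ is a weak equivalence.} Your argument here is incomplete. You reduce to the based case and say that both $\mathcal{L}\mathfrak{C}(X)(a,a)$ and $G^{Kan}(X)(a,a)$ are models for $\Omega|X|$, hence have the same homotopy type, and that they agree on $\pi_0$. But this does not show that \emph{the specific map} $\mathcal{L}Sz_X$ realizes this equivalence on higher homotopy groups. Knowing that two left Quillen equivalences land in weakly equivalent objects does not, by itself, force a given natural transformation between them to be a weak equivalence. The paper closes this gap by passing to right adjoints: it proves (Proposition~\ref{WandN}) that for every simplicial groupoid $C$ the map $\overline{W}\iota(C)\to\mathfrak{N}\iota(C)$ induced by $Sz$ is a weak homotopy equivalence, via an explicit computation of $\pi_n$ using the identifications $G^{Kan}(S^n)\cong\mathcal{L}F^*(S^{n-1})$ and $\mathfrak{C}(S^n)\cong F^*\big((S^1)^{\wedge(n-1)}\big)$ (the latter coming from necklaces), and noting that $Sz_{S^n}$ induces a collapse map $(S^1)^{\wedge(n-1)}\to S^{n-1}$ which is a weak equivalence. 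Since both $(\mathcal{L}\mathfrak{C},\mathfrak{N}\iota)$ and $(G^{Kan},\overline{W}^{Kan})$ are Quillen equivalences, a weak equivalence on right adjoints forces $\mathcal{L}Sz$ to be one on the left. This is the missing idea in your outline.

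\textbf{Minor point.} The final equivalence $\mathbb{G}(X)\simeq\mathbb{P}(X)$ as simplicial categories is not quite ``Dwyer--Kan'' alone; one also needs that $\mathfrak{N}(\mathbb{P}(X))\simeq X$, which the paper obtains by comparing $\mathcal{N}$, $\overline{W}$ and $\mathfrak{N}$ on fibrant groupoids together with May's $\mathcal{B}\Omega|X|\simeq|X|$.
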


The notation ``$Sz$'' stands for \textit{Szczarba}, since the natural transformation $Sz: \mathfrak{C} \xRightarrow{ } \mathbb{G}$ is constructed using a sequence of simplicial operators reminiscent of a construction of Szczarba while studying twisted cartesian products. The description of the natural transformation $Sz$ in terms of Szczarba's simplicial operators gives explicit formulas for a construction originally proposed by Hinich in  \cite{hinich2007homotopy}. In particular, one may deduce from the above theorem that the geometric realization of the Kan loop group of a $0$-reduced simplicial set $X$ is weakly equivalent to the based loop space of $|X|$ \textit{as a topological monoid.} A different proof of this fact may be found in \cite{berger1995}.


We now turn to the problem of constructing and comparing \textit{algebraic} models for the path category. 
Let $R$ be a fixed commutative ring and write $\otimes_R=\otimes$. The dg $R$-module of normalized simplicial chains equipped with Alexander-Whitney diagonal approximation gives rise to a lax monoidal functor $$C^{\Delta}_*: (\cat{sSet}, \times) \to (\cat{dgCoalg}_{R}^{\geq 0}, \otimes)$$ from simplicial sets to differential non-negatively graded counital coassociative coalgebras. There is an induced functor $$\mathcal{C}^{\Delta}_*: \cat{Cat}_{\cat{sSet}} \to \cat{Cat}_{ \cat{dgCoalg}_{R}^{\geq 0}},$$ obtained by applying $C^{\Delta}_*$ on mapping spaces, where $\cat{Cat}_{\cat{dgCoalg}_{R}^{\geq 0}}$ is the category of categories enriched over $(\cat{dgCoalg}_{R}^{\geq 0},\otimes)$. A map $f: \cat{C} \to \cat{D}$ in $\cat{Cat}_{\cat{dgCoalg}_{R}^{\geq 0}}$ is called a \textit{quasi-equivalence}, if it induces a quasi-isomorphism between the underlying dg $R$-modules of morphisms and an essentially surjective map between homotopy categories. 

Following the framework for categorical Koszul duality proposed in \cite{holstein-lazarev}, we define the category $\cat{cCoalg}_R$ of \textit{categorical $R$-coalgebras}. An object of $\cat{cCoalg}_R$ consists of the data $$(C,\Delta, \partial, h)$$ where $(C, \Delta)$ is a non-negatively graded coassociative counital $R$-coalgebra such that $C_0=R[P_C]$ is freely generated by a set of  ``objects'' $P_C$,  $\partial: C \to C$ is a coderivation of degree $-1$, and $h: C_2 \to R$ a ``curvature'' map measuring the failure of $\partial$ in squaring to zero such that certain properties and compatibilities are satisfied. The category $\cat{cCoalg}_R$ provides a convenient setting to define a ``many-object'' version of the cobar functor
$$\Omega: \cat{cCoalg}_R \to \cat{dgCat}^{\geq 0}_R,$$ where $\cat{dgCat}^{\geq 0}_R$ is the category of differential non-negatively graded categories over $R$. Categorical coalgebras are particular examples of \textit{pointed curved coalgebras} as defined in \cite{holstein-lazarev}.

Any simplicial set $X$ gives rise to a categorical coalgebra through a modified version of the normalized chains functor $$\widetilde{C}^{\Delta}_*: \cat{sSet} \to \cat{cCoalg}_R.$$ One of our observations is that, for any simplicial set $X$, $\widetilde{C}^{\Delta}_*(X)$ may be equipped with additional algebraic structure that we call a  $B_{\infty}$-\textit{coalgebra} structure. A $B_{\infty}$-coalgebra is a categorical coalgebra $C \in \cat{cCoalg}_R$ equipped with coassociative coproducts
$$\nabla_{x,y}: \Omega(C)(x,y)\to \Omega(C)(x,y) \otimes \Omega(C)(x,y)$$
for all objects $x$ and $y$ in $\Omega(C)$ making $\Omega(C)$ into a category enriched over the monoidal category of dg $R$-coalgebras. $B_{\infty}$-coalgebras form a category $\cat{B_{\infty}Coalg}_R$. Thus, the cobar construction can be regarded as a functor $$\mathbf{\Omega}: \cat{B_{\infty}Coalg}_R \to \cat{Cat}_{\cat{dgCoalg}_{R}^{\geq 0}}.$$


The following statement, which is our second result, describes how to obtain an algebraic model for the chains on the path category directly from the $B_{\infty}$-coalgebra of normalized chains on $X$. 

\begin{Thm}\label{theorem2} There exists a functor $\widetilde{\mathbf{C}}_*: \cat{sSet} \to \cat{B_{\infty}Coalg}$ satisfying the following properties.
\begin{enumerate}
    \item $\widetilde{\mathbf{C}}_*$ is a lift of $\widetilde{C}^{\Delta}_*$, i.e. there is a natural isomorphism of functors $\mathcal{U} \circ \widetilde{\mathbf{C}}_* \cong \widetilde{C}^{\Delta}_*$, where  $\mathcal{U}: \cat{B_{\infty}Coalg}_R \to \cat{cCoalg}_R$ denotes the forgetful functor. 
    \item There is a natural isomorphism of functors $F\circ \mathcal{Q} \cong \mathcal{S} \circ \mathbf{\Omega} \circ \widetilde{\mathbf{C}}_*$, where $$\mathcal{S}:\cat{Cat}_{\cat{dgCoalg}_{R}^{\geq 0}} \to \cat{Cat}$$ applies the ``set-like elements'' functor at the level of morphisms. 
    \item For any simplicial set $X$, there are natural quasi-equivalences
    $$\widehat{\mathbf{\Omega}}(\widetilde{\mathbf{C}}_*(X)) \xrightarrow{\mathbb{T}_X} \mathcal{C}^{\Delta}_*(\widehat{\mathfrak{C}}(X)) \xrightarrow{\mathcal{C}^{\Delta}_*(\iota (\mathcal{L} (Sz_X)) \circ \mu_X)}\mathcal{C}^{\Delta}_*(\mathbb{G}(X)),$$ where
    $$\widehat{\mathbf{\Omega}}:  \cat{B_{\infty}Coalg} \to \cat{Cat}_{\cat{dgCoalg}_{R}^{\geq 0}}$$ is defined on any $C \in \cat{B_{\infty}Coalg}$ by formally inverting all set-like elements in each dg coalgebra $\mathbf{\Omega}(C)(x,y)$ for all objects $x,y \in \mathbf{\Omega}(C)$.
\end{enumerate}
\end{Thm}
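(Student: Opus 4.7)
The plan is to handle the three parts in order, with Parts (1) and (2) reducing to explicit constructions once the right combinatorial data is fixed, and Part (3) containing the analytical content.

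For Part (1), I would equip $\mathbf{\Omega}(\widetilde{C}^{\Delta}_*(X))$ with coassociative coproducts $\nabla_{x,y}$ on each hom dg module, compatible with the cobar differential and natural in $X$. The natural candidate is a many-object extension of Baues' diagonal on the cobar construction: on a generator of $\mathbf{\Omega}(\widetilde{C}^{\Delta}_*(X))(x,y)$ corresponding to an $n$-simplex $\sigma$ of $X$, I would define $\nabla_{x,y}(\sigma)$ as a sum of tensor products of monomials in lower-dimensional faces, indexed by Szczarba-style splittings of $\sigma$. Coassociativity and the Leibniz rule then amount to standard identities among front-face and back-face operators. For Part (2), I would observe that the set-like elements of $\mathbf{\Omega}(\widetilde{\mathbf{C}}_*(X))(x,y)$ in degree zero are spanned by monomials in the degree-zero generators, which by construction correspond to $1$-simplices of $X$ with the appropriate endpoints; applying $\mathcal{S}$ degreewise thus recovers the free category on the quiver $\mathcal{Q}(X)$, yielding the natural isomorphism $F \circ \mathcal{Q} \cong \mathcal{S} \circ \mathbf{\Omega} \circ \widetilde{\mathbf{C}}_*$.

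For Part (3), the rightmost quasi-equivalence requires no new work: by Theorem \ref{theorem1}, the composite $\iota(\mathcal{L}(Sz_X)) \circ \mu_X$ is a weak equivalence of simplicial categories, and since $\mathcal{C}^{\Delta}_*$ is obtained by applying the normalized chains functor hom-wise (which preserves weak homotopy equivalences), the induced map is a quasi-equivalence. The substantive map is $\mathbb{T}_X$, which I would construct as a categorical extension of the Hess--Tonks twisting cochain. Explicitly, on a generator of $\widehat{\mathbf{\Omega}}(\widetilde{\mathbf{C}}_*(X))$ coming from a non-degenerate $n$-simplex $\sigma$, the image is the chain in $\mathcal{C}^{\Delta}_*(\widehat{\mathfrak{C}}(X))$ indexed by the necklaces of $\sigma$ in the sense of Dugger--Spivak and weighted by the corresponding Szczarba coefficients, extended multiplicatively to the free cobar algebra and descending through the localization using Part (2) together with the fact that $\widehat{\mathfrak{C}}$ inverts $\mathfrak{C}(X^1)$.

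To prove that $\mathbb{T}_X$ is a quasi-equivalence, I would filter both sides by the skeletal filtration of $X$ and compare the induced spectral sequences. On each attaching cell $\Delta^n$, the combinatorial description of $\mathfrak{C}(\Delta^n)$ in terms of posets of non-empty subsets of $\{0,\dots,n\}$ matches, up to localization, the standard presentation of $\mathbf{\Omega}(\widetilde{C}^{\Delta}_*(\Delta^n))$, and the $1$-skeletal base case is handled by Part (2); a Zeeman-style comparison then lifts the isomorphism on $E^1$-pages to the limits. The main obstacle will be verifying that $\mathbb{T}_X$ is in fact a morphism in $\cat{Cat}_{\cat{dgCoalg}_{R}^{\geq 0}}$ and not only in $\cat{dgCat}^{\geq 0}_R$, i.e. that it intertwines the coproduct $\nabla$ of Part (1) with the Alexander--Whitney diagonal on $\mathcal{C}^{\Delta}_*(\widehat{\mathfrak{C}}(X))$. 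This reduces to a combinatorial identity relating Szczarba's operators with iterated Alexander--Whitney splittings on the homotopy coherent nerve side, and is the categorical generalization of Franz's theorem that the Hess--Tonks morphism is a dg bialgebra map.
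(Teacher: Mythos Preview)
Your outline for Parts (1) and (2) is workable, but for Part (3) there is a genuine gap, and it stems from missing the paper's organizing idea: the cubical interpretation of the cobar construction.

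The paper does not build $\nabla_{x,y}$ by writing down Baues-type formulas with Szczarba splittings. Instead it proves a natural isomorphism $\Omega(\widetilde{C}^{\Delta}_*(X))(x,y)\cong C^{\square}_*(\mathcal{R}_!(\Pi(X)(x,y)))$ between the hom complexes of the cobar construction and normalized \emph{cubical} chains on a cubical set built from necklaces in $X$. The coproduct $\nabla_{x,y}$ is then simply the Serre diagonal on cubical chains, and Part (1) is immediate. More importantly, this identification dictates what $\mathbb{T}_X$ is: it is the Eilenberg--Zilber shuffle map $C^{\square}_*(\square^n)\cong C^{\Delta}_*(\Delta^1)^{\otimes n}\to C^{\Delta}_*((\Delta^1)^{\times n})$, applied hom-wise and then localized. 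Your description of $\mathbb{T}_X$ as ``indexed by necklaces and weighted by Szczarba coefficients'' conflates $\mathbb{T}_X$ with the \emph{subsequent} map $\mathcal{C}^{\Delta}_*(\iota(\mathcal{L}(Sz_X))\circ\mu_X)$; the Szczarba operators enter only in that second factor, not in $\mathbb{T}_X$ itself.

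This matters because the step you flag as the ``main obstacle''---showing that $\mathbb{T}_X$ respects the coalgebra structures---is precisely the step the cubical approach makes trivial. The classical Eilenberg--Zilber map is a map of dg coalgebras (Eilenberg--Moore), so comultiplicativity of $\mathbb{T}_X$ is inherited for free, and the quasi-isomorphism property follows from acyclic models rather than a skeletal spectral sequence. Your plan, by contrast, defers the hard content to ``a combinatorial identity\ldots\ the categorical generalization of Franz's theorem''; but one of the paper's stated goals is to \emph{explain} Franz's theorem as a consequence of this factorization, not to reprove it by direct computation. As written, your Part (3) is circular relative to the paper's aims and does not contain an actual proof of comultiplicativity.
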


The proof of Theorem \ref{theorem2} uses a cubical interpretation of the dg category $\Omega(\widetilde{C}^{\Delta}_*(X))$. 
The map $\mathbb{T}_X$ is then induced by a triangulation map that sends an $n$-dimensional cube to a formal sum of $n$-dimensional simplices indexed by the $n!$ permutations in the symmetric group of $n$ elements. 

Taking into account higher structure on the chains of a simiplicial set allows us to achieve two goals: 1) extract functorially a set of morphisms to formally invert them, and 2) model the chains on the path category not only as a dg category but as a category enriched over dg coalgebras. 
This results in an algebraic model for the path category of an arbitrary simplicial set. More precisely, Theorems \ref{theorem1} and \ref{theorem2} together imply the following. 

\begin{Thm}\label{algmodelforpathcat} For any simplicial set $X \in \cat{sSet}$, the dg coalgebra enriched categories $\widehat{\mathbf{\Omega}}(\widetilde{\mathbf{C}}_*(X))$ and $\mathcal{C}_*^{\Delta}(\mathbb{P}(X))$ are naturally quasi-equivalent.
\end{Thm}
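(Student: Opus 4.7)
The plan is to combine Theorems \ref{theorem1} and \ref{theorem2} by concatenating their respective zigzags of natural equivalences in $\cat{Cat}_{\cat{dgCoalg}_R^{\geq 0}}$.

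First I would invoke Theorem \ref{theorem2}(3) directly, which supplies the natural zigzag of quasi-equivalences
\begin{equation*}
\widehat{\mathbf{\Omega}}(\widetilde{\mathbf{C}}_*(X)) \xrightarrow{\mathbb{T}_X} \mathcal{C}^{\Delta}_*(\widehat{\mathfrak{C}}(X)) \xrightarrow{\;\simeq\;} \mathcal{C}^{\Delta}_*(\mathbb{G}(X)).
\end{equation*}
This step requires no further work. Next, I would use Theorem \ref{theorem1}, which asserts that $\mathbb{G}$ is a solution to Problem 1, meaning that $\mathbb{G}(X)$ and $\mathbb{P}(X)$ are naturally weakly equivalent as simplicial categories (via a zigzag through the other solutions $\widehat{\mathfrak{C}}$ and $\mathbb{C}$, together with the natural weak equivalences from Problem 1 connecting any solution to $\mathbb{P}$).

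To promote that zigzag of weak equivalences of simplicial categories to a zigzag of quasi-equivalences in $\cat{Cat}_{\cat{dgCoalg}_R^{\geq 0}}$, I would establish the following lemma: the functor $\mathcal{C}^{\Delta}_*\colon \cat{Cat}_{\cat{sSet}} \to \cat{Cat}_{\cat{dgCoalg}_R^{\geq 0}}$ sends weak equivalences of simplicial categories to quasi-equivalences. This reduces to two standard facts: (i) the normalized chains functor $C^{\Delta}_*\colon \cat{sSet}\to \cat{dgMod}_R$ takes weak homotopy equivalences to quasi-isomorphisms of underlying dg $R$-modules, and (ii) an equivalence of homotopy categories is in particular essentially surjective. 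The lax monoidality of $C^{\Delta}_*$ via the Alexander–Whitney map guarantees that the construction is compatible with composition and is itself functorial in the simplicial category. Concatenating with the zigzag above yields a natural chain of quasi-equivalences connecting $\widehat{\mathbf{\Omega}}(\widetilde{\mathbf{C}}_*(X))$ to $\mathcal{C}^{\Delta}_*(\mathbb{P}(X))$.

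The main obstacle, such as it is, is not conceptual but bookkeeping: one must verify that the natural weak equivalence from $\mathbb{G}$ to $\mathbb{P}$ provided by Problem 1 can be made into an explicit natural zigzag in $\cat{Cat}_{\cat{sSet}}$ (as opposed to merely an abstract isomorphism in the homotopy category of simplicial categories), so that applying $\mathcal{C}^{\Delta}_*$ termwise makes sense and produces a natural transformation in $X$. Once this naturality is pinned down, the result is immediate by stringing together the two zigzags.
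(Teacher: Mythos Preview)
Your proposal is correct and mirrors the paper's own argument: invoke Theorem~\ref{theorem2}(3) to obtain the quasi-equivalence $\widehat{\mathbf{\Omega}}(\widetilde{\mathbf{C}}_*(X)) \simeq \mathcal{C}^{\Delta}_*(\mathbb{G}(X))$, then use the natural zigzag of weak equivalences $\mathbb{G}(X)\simeq \mathbb{P}(X)$ from Theorem~\ref{theorem1} (spelled out in Theorem~\ref{equivalencesofsimplicialcategories}) together with the observation that $\mathcal{C}^{\Delta}_*$ sends weak equivalences of simplicial categories to quasi-equivalences. The paper leaves this last observation implicit, and the naturality bookkeeping you flag is handled by the explicit zigzag in Theorem~\ref{equivalencesofsimplicialcategories}.
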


Our argument also provides a conceptual explanation for the main results of \cite{hess2010loop} and \cite{franz2021szczarba}.  When $X$ is a $0$-reduced simplicial set, the category  $\widehat{\mathbf{\Omega}}(\widetilde{\mathbf{C}}_*(X))$ has a single object and consequently can be interpreted as a dg bialgebra. Its underlying dg algebra is isomorphic to the extended cobar construction $\widehat{\Omega}C^{\Delta}_*(X)$ introduced in \cite{hess2010loop}. The coalgebra structure on $\widehat{\Omega}C^{\Delta}_*(X)$ is determined by the homotopy Gerstenhaber coalgebra structure of $C^{\Delta}_*(X)$, as discussed in \cite{franz2021szczarba}. The following corollary is then a direct consequence of Theorem 1.2.
\begin{Cor} For any $0$-reduced simplicial set $X$, there are natural quasi-isomorphisms of dg bialgebras  $$\widehat{\mathbf{\Omega}}(\widetilde{\mathbf{C}}_*(X)) \cong \widehat{\Omega}(C^{\Delta}_*(X)) \xrightarrow{\mathbb{T}_X} C_*(\widehat{\mathfrak{C}}(X)) \xrightarrow{C_*(\iota (\mathcal{L} (Sz_X)) \circ \mu_X)}C_*(G^{Kan}(X)),$$ whose composition coincides with the chain level "Szczarba map"  described in section 1.4 of  \cite{hess2010loop}.
\end{Cor}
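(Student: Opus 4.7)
The plan is to deduce this corollary as the $0$-reduced specialization of Theorem~\ref{algmodelforpathcat} (and hence of Theorem~\ref{theorem2}). First I would observe that if $X$ is $0$-reduced then $\mathcal{Q}(X)$ is a quiver with a single vertex, so by property (2) of Theorem~\ref{theorem2} every simplicial category appearing on both sides has a single object. A one-object dg-coalgebra enriched category is exactly a dg bialgebra, so $\widehat{\mathbf{\Omega}}(\widetilde{\mathbf{C}}_*(X))$, $C_*(\widehat{\mathfrak{C}}(X))$, and $C_*(G^{Kan}(X))$ canonically inherit dg bialgebra structures, and the maps $\mathbb{T}_X$ and $C_*(\iota(\mathcal{L}(Sz_X))\circ\mu_X)$ from Theorem~\ref{theorem2}(3) specialize to bialgebra maps. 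By the same theorem these maps are quasi-equivalences, which in the single-object case means quasi-isomorphisms.

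Next I would identify the leftmost term with the extended cobar construction. On underlying dg algebras, $\mathbf{\Omega}(\widetilde{\mathbf{C}}_*(X))$ reduces to $\Omega(\widetilde{C}^{\Delta}_*(X))$ by property (1), and formally inverting set-like elements produces precisely Hess–Tonks' $\widehat{\Omega}(C^{\Delta}_*(X))$ from \cite{hess2010loop}; this identification is essentially tautological once the curvature and the $0$-cell inversions are matched. The coproduct on $\widehat{\Omega}(C^{\Delta}_*(X))$ supplied by the $B_{\infty}$-structure of $\widetilde{\mathbf{C}}_*(X)$ must then be identified with the one built from the homotopy Gerstenhaber coalgebra structure on $C^{\Delta}_*(X)$ used in \cite{franz2021szczarba}. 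I would carry out this identification by comparing the two coproducts on generators of the form $s^{-1}\sigma$ for a simplex $\sigma$, noting that both are assembled from the same cubical-triangulation data that underlies the Serre coproduct construction described in the proof of Theorem~\ref{theorem2}.

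To match the composition with the chain-level Szczarba map of section~1.4 of \cite{hess2010loop}, I would unravel the explicit description of the natural transformation $Sz$ in terms of Szczarba's simplicial operators, as emphasized after Theorem~\ref{theorem1}, and trace a generating $n$-simplex of $X$ through the composite $\mathbb{T}_X$ followed by $C_*(\iota(\mathcal{L}(Sz_X))\circ\mu_X)$. The triangulation map $\mathbb{T}_X$ contributes the sum over permutations in $S_n$ sending an $n$-cube to $n!$ simplices, and the Szczarba operators then act degree by degree; both ingredients are exactly the combinatorial data defining the Szczarba twisting cochain, so the composite recovers it on the nose.

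The only substantive point is the coalgebra identification in the second paragraph; everything else is a direct specialization of Theorems~\ref{theorem1} and \ref{theorem2}, so I expect the compatibility of the $B_{\infty}$ coproduct with the homotopy Gerstenhaber coproduct of \cite{franz2021szczarba} to be the sole obstacle. Once that is pinned down, the bialgebra statement and the matching with Hess–Tonks' Szczarba map follow immediately.
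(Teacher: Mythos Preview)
Your proposal is correct and follows essentially the same route as the paper. The one place where you are slightly too casual is the identification $\widehat{\mathbf{\Omega}}(\widetilde{\mathbf{C}}_*(X)) \cong \widehat{\Omega}(C^{\Delta}_*(X))$: the paper does not treat this as tautological but writes down an explicit dg algebra isomorphism $\varphi$ determined by $\varphi(s^{-1}\sigma)=s^{-1}\sigma-1_R$ for $\sigma\in X_1$ and $\varphi(s^{-1}\sigma)=s^{-1}\sigma$ for $\sigma\in X_n$, $n>1$, extended multiplicatively; this shift is exactly what absorbs the curvature term and the modification $\widetilde{\partial}=\partial-(\text{id}\otimes\widetilde{e}-\widetilde{e}\otimes\text{id})\circ\Delta$. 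Once $\varphi$ is in hand, the coproduct identification with the homotopy Gerstenhaber coproduct is simply cited from \cite{franz2021szczarba} and \cite{Bau81} rather than re-derived, and the matching with Hess--Tonks' $\phi$ is done exactly as you describe, by tracing a single necklace $\sigma\in X_{n+1}$ through $\mathbb{T}_X$ (the $n!$ permutation sum) followed by the Szczarba operators, using $\varphi(\sigma)=\sigma-1_R$ in degree zero.
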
 

Note, however, that $\widehat{\mathbf{\Omega}}$ is now functorial with respect to the initial algebraic chain-level data, in contrast to the extended cobar construction of \cite{hess2010loop} which is functorial with respect to space-level data (or functorial after equipping the initial algebraic data with a choice of basis for the degree $1$ part of the underlying coalgebra). 
\subsection{Organization of the paper}
This paper has five sections and an appendix. In section 2, we recall preliminaries regarding quivers and simplicial sets as well as different adjunctions involving forgetful, free, inclusion and localization functors. In section 3, we construct a natural transformation between the functors $\mathfrak{C}$ and $G$ in terms of a sequence of operators introduced by Szczarba, in a different context, in \cite{szczarba1961homology}. This involves defining these functors as well as giving an interpretation in terms of the framework of necklaces. In section 4, we prove that different localizations of $\mathfrak{C}$ are weakly equivalent to the Kan loop groupoid functor. The end result is that we obtain different explicit combinatorial models (described in terms of simplicial categories) for the path category of a space, as summarized in Theorem \ref{equivalencesofsimplicialcategories}. 
In section 5, we turn to the algebraic problem of modelling the path category, now considered as a category enriched over dg coalgebras, directly from chain-level algebraic structure naturally associated to a simplicial set. This involves introducing the notion of a $B_{\infty}$-coalgebra and a localized version of the many-object cobar construction, building upon \cite{holstein-lazarev}. Finally, in the appendix we discuss different Quillen equivalences of model categories that are used in section 4.

\subsection{Acknowledgments}
MR was supported by NSF Grant 210554 and the Karen EDGE Fellowship. The authors would like to thank Clemens Berger, Kathryn Hess, Julian Holstein, Ralph Kaufmann,  Andrey Lazarev, Yang Mo, and Jim Stasheff for fruitful conversations. We'd also like to thank Kensuke Arawaka for finding typos that lead to correcting the formula for the natural transformation $Sz$ constructed in section 3. 

\section{Quivers, simplicial sets, and localization}
\subsection{Quivers and simplicial sets} 
A \textbf{quiver} (also known as a reflexive directed graph) consists of a pair of sets $Q_1, Q_0$ equipped with maps $t,s: Q_1 \to Q_0$ called the source and target maps and $1: Q_0 \to Q_1$, called the unit map such that for any $q \in Q_0$, $(t \circ 1)(q) = (s \circ 1)(q) = q$. A morphism  of quivers $f:Q \to Q'$ consists on two maps of sets $f_0: Q_0 \to Q'_0$ and $f_1: Q_1 \to Q'_1$ that preserve the unit, source and target maps. Quivers form a category which we denote by $\cat{Quiv}$.

A \textbf{simplicial set} is a functor $X: \mathbf{\Delta}^{op} \to \cat{Set}$, where $\mathbf{\Delta}$ is the category of finite ordinals with order-preserving maps and $\cat{Set}$ is the category of sets. Simplicial sets form a category $\cat{sSet}$ with natural transformations as morphisms. For any simplicial set $X$, we write $X_n=X([n]) \in \cat{Set}.$ The data of a simplicial set $X$ is equivalent to a sequence of sets $\{X_0, X_1, X_2,...\}$ with face maps $d_i=d_{i,n}: X_n \to X_{n-1}$ for $i=0,...,n$ and degeneracy maps $s_j=s_{j,n}: X_n \to X_{n+1}$ for $j=0,...,n$ satisfying the simplicial identities. The category of simplicial sets becomes a monoidal category $(\cat{sSet}, \times)$ when equipped with the level-wise Cartesian product of sets.

Quivers may be thought of as simplicial sets with no non-degenerate simplices of dimension $2$ or above. More precisely, there is a fully faithful functor $$j: \cat{Quiv} \to \cat{sSet}$$ sending a quiver $Q$ to the simplicial set $j(Q)$ defined by $j(Q)_0=Q_0$, $j(Q)_1=Q_1$, with the two face maps $d_0, d_1: j(Q)_1 \to j(Q)_0$ given by the source and target maps, and adjoining degeneracies formally.

We describe several free-forgetful adjunctions that will be used throughout the article. 

\begin{Def} Consider the functor $U: \cat{Cat} \to \cat{Quiv}$, where $\cat{Cat}$ is the category of small categories, given by forgetting the composition rule. The functor $U$ has a left adjoint $F: \cat{Quiv} \to \cat{Cat}$. For any quiver $Q$,  $F(Q)$ is a category, where $\text{Obj}(F(Q)) = Q_0$ and where $\text{Mor}(F(Q))$ is the set of finite formal compositions of elements of $Q_1$. We say that $F(Q)$ is the \textbf{free category} generated by the quiver $Q$ and that $U(\mathcal{C})$ is the \textbf{underlying quiver} of the category $\mathcal{C}$. We obtain an adjunction
$$F: \cat{Quiv} \rightleftarrows \cat{Cat}: U.$$
\end{Def}

\begin{Rem}
We follow the convention of writing the left adjoints on the left hand side with corresponding arrow at the top and going from left to right, just as above. 
\end{Rem}

\begin{Def} Let $\cat{sSet}^*= \left( \Delta^0 \downarrow \cat{sSet} \right)$ be the category of \textbf{pointed simplicial sets}. Denote by 
$\cat{sMon}$ the category of \textbf{simplicial monoids}, i.e. simplicial sets $M$ equipped with an associative and unital map of simplicial sets $M \times M \to M$. 

We now define another free-forgetful adjunction
$$F^*: \cat{sSet}^* \rightleftarrows \cat{sMon}: U^*.$$

Given $(X,b) \in \cat{sSet}^*$, let $F^*(X)_n$ denote the free monoid generated by the set $X_n$ with unit $s_0^{n}(b)$, where $s_0$ denotes the $0$-th degeneracy map. 
Given a simplicial monoid $M$, $U^*(M)$ is the simplicial set obtained by forgetting the product structure, and whose basepoint is given by the unit $e \in M_0$.
\end{Def}

\begin{Def}
We call a set $X$ equipped with a function $X \xrightarrow{\text{deg}} \Z_{\geq 0}$ a \textbf{graded set}. Graded sets form a category $\cat{Set}^{\geq 0}$ with set maps preserving the grading. There is an adjunction $$F^\Delta: \cat{Set}^{\geq 0} \rightleftarrows \cat{sSet}: U^\Delta,$$ where $U^\Delta$ forgets the simplicial face and degeneracy maps, and if $X$ is a graded set, then $F^{\Delta}(X)$ is the simplicial set with $(F^{\Delta}(X))_k = \text{deg}^{-1}(k)$ with faces and degeneracies freely adjoined. In other words,
\begin{equation}
F^{\Delta}(X) \coloneqq \coprod_{k \geq 0} \,  \coprod_{x \in \text{deg}^{-1}(k)} \Delta^k 
\end{equation}
We call $F^{\Delta}(X)$ the \textbf{free simplicial set} generated by the graded set X.
\end{Def}


Let $X$ be a graded singleton set $X = \{ x \}$ with $\text{deg} \, x = k$. Then $x$ corresponds to the unique nondegenerate $k$-simplex $\Delta^k \xrightarrow{x} F^{\Delta}(X) \cong \Delta^k$. We define the poset
\begin{equation}\label{poset bracket}
   \langle x \rangle^k \coloneqq \{ d_1^k x \leq d_1^{k-1} d_0 x \leq \dots \leq d^1_1 d_0^{k-1} x \leq d_0^k x \},
\end{equation}
so that we have natural isomorphisms $\langle x \rangle^k \cong [k]$ and $N \langle x \rangle^k \cong F^\Delta \{ x \} \cong \Delta^k$, where $N$ is the nerve functor.

\begin{Rem}
The elements of $\langle x \rangle^k$ are the vertices of the simplex $x$ ordered from $0$ to $k$.
\end{Rem}

\begin{Def}
A \textbf{graded quiver} is a quiver $Q$ equipped with a function $Q_1 \xrightarrow{\text{deg}} \Z_{\geq 0}$ called the grading. Graded quivers form a category $\cat{Quiv}^{\geq 0}$ with morphisms of quivers that preserve the gradings. 

Let $\cat{Cat}_{\cat{sSet}}$ denote the category of small categories enriched over $(\cat{sSet}, \times)$. This is equivalently the category of simplicial objects in $\cat{Cat}$ whose simplicial operators are the identity on objects. We refer to these objects as \textbf{simplicial categories}. If $\cat{C}$ is a simplicial category, then $\cat{C}(x,y)$ is a simplicial set for all objects $x,y \in \text{Obj}(\cat{C})$, and thus we refer to the set $\cat{C}(x,y)_k$ as the set of \textbf{$k$-morphisms} with source $x$ and target $y$.

There is a forgetful functor
$$\mathcal{U}: \cat{Cat}_{\cat{sSet}} \to \cat{Quiv}^{\geq 0}$$ 
that forgets about composition and the simplicial structure of the morphisms. The forgetful functor has a left adjoint $$\mathcal{F}: \cat{Quiv}^{\geq 0} \to \cat{Cat}_{\cat{sSet}}.$$ 
For any graded quiver $Q$, $\mathcal{F}(Q)$ is the simplicial category having $Q_0$ as objects and for each $x,y \in Q_0$, the simplicial set of morphisms $\mathcal{F}(Q)(x,y) \in \cat{sSet}$ is defined as follows. First consider $F^\Delta (Q_1(x,y))$, the free simplicial set generated by the graded set $Q_1(x,y)$, for each $x,y \in Q_0$, where $Q_1(x,y)=\{e \in Q_1: s(e)=x, t(e)=y\}$. Then define $\mathcal{F}Q(x,y)_n$ to consist of all finite formal compositions $e_k \cdot \dotsc \cdot e_1$ where $e_i \in F^\Delta(Q_1(s(e_i),t(e_i))_n$ and $t(e_i)=s(e_{i+1})$ for all $i=1,\dots,n$. The face maps $d_j: \mathcal{F}(Q)(x,y)_n \to \mathcal{F}(Q)(x,y)_{n-1}$ are defined by $d_j(e_k \cdot \dotsc \cdot e_1) = d_j(e_k) \cdot \dotsc \cdot d_j(e_1)$ and degeneracies are defined similarly. The simplicial category $\mathcal{F}(Q)$ is called the \textbf{free simplicial category} generated by the graded quiver $Q$.

\end{Def}

\subsection{Localization} 
\begin{Def} \label{localizationofcategories} Let $L: \cat{Cat} \to \cat{Gpd}$ be the classical (Gabriel-Zisman) \textbf{localization} functor from categories to groupoids. Given any category $C$, $L(C)$ is the groupoid whose morphisms are obtained by adding formal inverses to all morphisms in $C$. The localization functor is the left adjoint of an adjunction
$$L: \cat{Cat} \rightleftarrows \cat{Gpd}: i$$
where $i$ is the fully faithful inclusion from groupoids into categories.

The above adjunction induces a new adjunction
$$\mathcal{L}: \cat{Cat}_{\cat{sSet}} \rightleftarrows \cat{Gpd}_{\cat{sSet}}: \iota$$
between simplicial categories and simplicial groupoids. For any simplicial category $\mathcal{C}$, we define $\mathcal{L}(\mathcal{C})_n = L( \mathcal{C}_n )$ together with the obvious face and degeneracy maps on morphisms. In other words, we apply the localization functor $L$ degree-wise. We also call $\mathcal{L}$ the localization functor. For any simplicial category $\mathcal{C}$ the unit of the adjunction gives us a natural map of simplicial categories $\mathcal{C} \to \iota \mathcal{L}(\mathcal{C})$, which we often write as $\mathcal{C} \to \mathcal{L}(\mathcal{C})$

For any map $i: \mathcal{W} \xrightarrow{} \mathcal{C}$ of simplicial categories, we denote by $\mathcal{C}[\mathcal{W}^{-1}]$ the (ordinary) pushout of the maps $i: \mathcal{W} \to \mathcal{C}$ and $\mathcal{W} \to \mathcal{L}(\mathcal{W})$ in $\cat{Cat}_{\cat{sSet}}$ as given by the following diagram:
\begin{equation}
    \begin{tikzcd}
\mathcal{W} \arrow[d, "i"'] \arrow[r]  & {\mathcal{L}(\mathcal{W})} \arrow[d] \\
\mathcal{C} \arrow[r]                      & {\mathcal{C}[\mathcal{W}^{-1}]} \arrow[ul, phantom, "\ulcorner", very near start].    
\end{tikzcd}
\end{equation}
\end{Def}
The following is now straightforward to verify.
\begin{Prop}
The adjunction $\mathcal{L}: \cat{Cat}_{\cat{sSet}, B} \rightleftarrows  \cat{Gpd}_{\cat{sSet},DK}: \iota$ is a Quillen adjunction when these categories are equipped with the Bergner model structure (Theorem \ref{scatB}) and Dwyer-Kan model structure (Theorem \ref{sgpdDK}), respectively. 
\end{Prop}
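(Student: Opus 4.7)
The plan is to verify that the right adjoint $\iota$ preserves fibrations and trivial fibrations, which is one of the standard characterizations of a Quillen adjunction. Recall from Theorem \ref{scatB} that a map in $\cat{Cat}_{\cat{sSet}}$ is a Bergner fibration iff it is a local Kan fibration on all mapping spaces together with an isofibration condition on $\pi_0$, and a weak equivalence iff it is a local weak equivalence that induces an equivalence on $\pi_0$. Dually, by Theorem \ref{sgpdDK}, fibrations and weak equivalences in $\cat{Gpd}_{\cat{sSet}}$ are defined by the same local conditions, while the $\pi_0$-condition for fibrations becomes a lifting property for arbitrary $0$-simplex morphisms, since every morphism in a simplicial groupoid is invertible.

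Since $\iota$ is the identity on objects, mapping spaces, and simplicial enrichment, the local Kan fibration and local weak equivalence conditions transfer across $\iota$ without change. For weak equivalences, an equivalence of groupoids is in particular an equivalence of categories, so $\iota$ carries any Dwyer-Kan equivalence of simplicial groupoids to a Dwyer-Kan equivalence of simplicial categories, hence to a Bergner weak equivalence. For fibrations, given a Dwyer-Kan fibration $f: \cat{G} \to \cat{H}$, I would verify the Bergner isofibration condition for $\iota(f)$ by representing a given isomorphism class $[g] \in \pi_0\cat{H}(fx, y')$ by a $0$-simplex, lifting it via the Dwyer-Kan path-lifting property to a $0$-simplex of $\cat{G}(x, \tilde y')_0$, and observing that its homotopy class is automatically an isomorphism in $\pi_0\cat{G}$ because $\cat{G}$ is a simplicial groupoid. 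Combining the two cases gives that $\iota$ preserves fibrations and trivial fibrations, hence $(\mathcal{L}, \iota)$ is a Quillen adjunction.

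The only real subtlety, and not a genuine obstacle, is the comparison of the Bergner isofibration condition with the Dwyer-Kan path-lifting condition. This reduces to the observation that in a simplicial groupoid every $0$-simplex of a mapping space represents an isomorphism in the component groupoid, so that strict $0$-simplex lifts automatically produce $\pi_0$-isomorphism lifts. With this observation in place, the proof is essentially a direct translation of the model-structural definitions across $\iota$, using that the inclusion is identity on the underlying simplicial-enriched data.
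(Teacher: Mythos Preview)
Your proposal is correct and follows the standard route of showing that the right adjoint $\iota$ preserves fibrations and acyclic fibrations. The paper does not give an explicit proof of this proposition, but for the analogous Proposition~\ref{locfree} it uses exactly your strategy, remarking only that the right adjoint preserves fibrations and acyclic fibrations (in fact all weak equivalences).

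One small simplification relative to your write-up: in the paper's formulation of Theorems~\ref{scatB} and~\ref{sgpdDK}, the fibrations and weak equivalences in $\cat{Gpd}_{\cat{sSet},DK}$ are described by \emph{literally the same} conditions as in $\cat{Cat}_{\cat{sSet},B}$ (local Kan fibration / local weak equivalence together with an isofibration / equivalence condition on $\pi_0$), just restricted to simplicial groupoids. Since $\iota$ is the identity on all underlying data, the preservation of fibrations and (acyclic) weak equivalences is immediate, and the detour through the ``path-lifting at the $0$-simplex level'' characterization of Dwyer--Kan fibrations is not needed here. Your argument handling that translation is nonetheless correct and would be the right thing to say if one were working with that alternative definition.
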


By restricting the functors $\mathcal{L}$ and $\iota$ to categories with one object we obtain an induced adjunction $\mathcal{L}: \cat{sMon} \rightleftarrows \cat{sGrp}: \iota$. Denote by $\cat{sSet}^*_{KQ}$ the model category given by the slice model structure on  $(\Delta^0 \downarrow \cat{sSet}) =\cat{sSet}^*$ induced by the Kan-Quillen model structure on $\cat{sSet}.$

\begin{Prop} \label{locfree}
The adjunction $\mathcal{L}F^*: \cat{sSet}^*_{KQ} \rightleftarrows \cat{sGrp}_K: U^*\iota$ is a Quillen adjunction. 
\end{Prop}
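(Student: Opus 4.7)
The plan is to verify the Quillen adjunction by checking the standard characterization that the right adjoint $U^*\iota$ preserves fibrations and trivial fibrations. This sidesteps any need to write out generating (trivial) cofibrations in $\cat{sSet}^*_{KQ}$ or to analyze the group completion on the level of cofibrant objects.

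First, I would unpack the two model structures involved. By definition of the slice model structure, a morphism in $\cat{sSet}^*_{KQ} = (\Delta^0 \downarrow \cat{sSet})$ is a fibration, respectively a weak equivalence, precisely when its image under the forgetful functor to $\cat{sSet}_{KQ}$ is a Kan fibration, respectively a weak homotopy equivalence. On the other side, the Kan model structure $\cat{sGrp}_K$ (recalled in the appendix) is transferred along the underlying simplicial set functor $\cat{sGrp} \to \cat{sSet}$: a morphism of simplicial groups is a fibration or a weak equivalence exactly when the underlying simplicial map is a Kan fibration or weak homotopy equivalence.

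Next, I would trace through the definition of the right adjoint. For any simplicial group $G$, the object $U^*\iota(G) \in \cat{sSet}^*$ is simply the underlying simplicial set of $G$ pointed at the identity element, and on a morphism $f \colon G \to H$ the map $U^*\iota(f)$ has the same underlying map of simplicial sets as $f$. Combining this with Step 1 yields immediately: if $f$ is a fibration in $\cat{sGrp}_K$, then $U^*\iota(f)$ is a fibration in $\cat{sSet}^*_{KQ}$; and likewise for trivial fibrations. Hence $U^*\iota$ is a right Quillen functor, so $(\mathcal{L}F^*, U^*\iota)$ is a Quillen adjunction.

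There is no serious obstacle in this argument; the proof amounts to a careful unpacking of definitions, since both model structures in question are characterized by the same property at the level of underlying simplicial sets. The only point worth double-checking is that the basepoint of $U^*\iota(G)$ (the identity) is preserved under maps of simplicial groups, which is immediate, so that $U^*\iota$ indeed lands in the slice category rather than in $\cat{sSet}$ itself.
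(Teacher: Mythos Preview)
Your proposal is correct and follows exactly the same approach as the paper's proof: show that the right adjoint $U^*\iota$ preserves fibrations and acyclic fibrations by unpacking the definitions of the two model structures. The paper's version is simply a one-line statement of this fact (noting additionally that $U^*\iota$ preserves all weak equivalences), whereas you have spelled out the details.
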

\begin{proof} This is immediate since the right adjoint $U^*\iota$ preserves fibrations and acyclic fibrations (in fact, it preserves all weak equivalences). 
\end{proof}

\section{The Szczarba map}

In this section we describe a natural transformation $Sz: \mathfrak{C} \xRightarrow{ } G$, where $$\mathfrak{C}: \cat{sSet} \to \cat{Cat}_{\cat{sSet}}$$ is the \textit{rigidification} functor, the left adjoint of Cordier's homotopy coherent nerve functor $\mathfrak{N}: \cat{Cat}_{\cat{sSet}} \to \cat{sSet}$ (\cite{Cor82}, \cite{lurie2006higher}, \cite{DuSp11}) and $$G: \cat{sSet} \to \cat{Cat}_{\cat{sSet}}$$ is the left adjoint of the \textit{classifying space} functor $\overline{W}: \cat{Cat}_{\cat{sSet}} \to \cat{sSet}$ \cite{dwyer_kan_1984}. The localization $\mathcal{L} \circ G$ is known as the \textit{Kan loop groupoid} functor.  The natural transformation $Sz: \mathfrak{C} \xRightarrow{ } G$ is described in terms of a sequence of simplicial operators that are reminiscent of those introduced by Szczarba \cite{szczarba1961homology}.

\subsection{The Kan loop groupoid functor}

\begin{Rem}
The following definition comes from \cite[Section 2.6]{hinich2007homotopy}. 
\end{Rem}

\begin{Def} \label{GHin def}
Let $G(\Delta^n) = \mathcal{F} (Q^n)$ denote the free simplicial category generated by the graded quiver $Q^n$ with $Q^n_0=\{0,...,n\}$ and where the graded set $Q^n_1$ has exactly one element $g_i$ in degree $n-i$ with $s(g_i) = i-1, t(g_i) = i$ for $i = 1, \dots, n$. 

It follows that every simplex in $G(\Delta^n)(i,j)$ is a formal composition of the elements $g_k$, their faces, and their degeneracies. In other words, using the notation introduced in \ref{poset bracket}, $$G(\Delta^n)(i,j) \cong \Delta^{n-j} \times \dots \times \Delta^{n-i-1} \cong N(\langle g_j \rangle^{n-j} \times \dots \times \langle g_{i+1} \rangle^{n-i-1}),$$ where $N$ denotes the nerve functor. The assignment $[n] \mapsto G (\Delta^n)$ defines a cosimplicial simplicial category as follows.

Let $d^i: G(\Delta^n) \to G(\Delta^{n+1})$ with $1 \leq i \leq n+1$ act on objects in the obvious way and define it on the generators by:
\begin{equation}
    d^i(g_j) = \begin{cases}
    g_{j+1} & \text{if } i < j, \\
    g_{i+1} d_0 g_i & \text{if } i = j,\\
    d_{i-j} g_j & \text{if } i > j.\\ 
    \end{cases} \\
\end{equation}
Similarly let $s^i: G(\Delta^n) \to G(\Delta^{n-1})$ act on objects in the obvious way and define it on the generators by:
\begin{equation}
s^i(g_j) = \begin{cases}
    s_{i-j} g_j & \text{if } j \leq i \\
    \text{id}_j & \text{if } j = i + 1 \\
    g_{j-1} & \text{if } j > i + 1.
    \end{cases}
\end{equation}
where $\text{id}_j$ denotes the unique $(n-j)$-simplex in $G(\Delta^{n-1})(j,j) \cong \Delta^0$.

The category $\cat{Cat}_{\cat{sSet}}$ is cocomplete, thus we can define the functor $$G: \cat{sSet} \to \cat{Cat}_{\cat{sSet}}$$ on any simplicial set $X$ by the formula  $$G(X):= \underset{\Delta^n \to X}{\text{colim }} G(\Delta^n) \in \cat{Cat}_{\cat{sSet}}.$$ 
\end{Def}

\begin{Def}
Define the functor $$\conj{W}: \cat{Cat}_{\cat{sSet}} \to \cat{sSet}$$ on any simplicial category $C$ by letting $\conj{W}(C) \in \cat{sSet}$ be the simplicial set having as $n$-simplices the set $\conj{W}(C)_n= \cat{Cat}_{\cat{sSet}}(G (\Delta^n), C)$ and face and degeneracy maps induced by the cosimplicial structure of $G (\Delta^{\bullet})$.

Hence we have an adjunction 
\begin{equation}
    G : \cat{sSet} \rightleftarrows \cat{Cat}_{\cat{sSet}}: \conj{W}.
\end{equation}
\end{Def}

\begin{Def}
We now recall the definition of the \textbf{Kan loop groupoid} functor $G^{Kan}: \cat{sSet} \to \cat{Gpd}_{\cat{sSet}}$. If $X$ is a simplicial set, then for each integer $n \geq 0$, define $G^{Kan}(X)_n$ to be the free groupoid with object set $\text{Obj}( G^{Kan}(X)_n) = \lbrace \conj{x} \, | \, x \in X_0 \rbrace$ and morphism set $\text{Mor}(G^{Kan}(X)_n)$ generated by elements $\conj{y}: \conj{s(y)} \to \conj{t(y)}$, where $y \in X_{n+1}$ and $s = (d_1)^{n+1}, t = d_0(d_2)^n$ with relation $\conj{s_0 z} = \text{id}_{\conj{s(z)}}$ for $z \in X_n$. Define face maps $\delta_i: G^{Kan}(X)_n \to G^{Kan}(X)_{n-1} $ and degeneracy maps $\sigma_i: G^{Kan}(X)_n \to  G^{Kan}(X)_{n+1}$ to be the identity on objects and on morphisms given by:
\begin{equation}
\begin{aligned}
& \sigma_i \conj{x} = \conj{s_{i+1} x} \\
& \delta_i\conj{x} = \conj{d_{i+1} x} \text{ for } 1 \leq i \leq n \\
& \delta_0 \conj{x} = (\conj{d_0 x})^{-1} \conj{d_1 x }.
\end{aligned}
\end{equation}
\end{Def}

\begin{Def}
Let $\overline{W}^{Kan}: \cat{Gpd}_{\cat{sSet}} \to \cat{sSet}$ denote the functor such that:
\begin{equation*}
\begin{aligned}
    & \overline{W}^{Kan}(\mathcal{G})_0 = \text{Obj } \mathcal{G}_0 \\
    & \overline{W}^{Kan}(\mathcal{G})_n  = \lbrace (h_{n-1}, \dots, h_0) \, | \, h_k \in \text{Mor }\mathcal{G}_k, \, t(h_k) = s(h_{k-1}) \rbrace \\
    \end{aligned}
    \end{equation*}
    with face and degeneracy maps given by:
    \begin{equation*}
    d_i(h_{n-1}, \dots, h_0)  = \begin{cases} (h_{n-2}, \dots, h_0), & i = 0 \\
    (d_{i-1} h_{n-1}, \dots,h_{n-i-1} d_0 h_{n-i}, \dots, h_0), & 0 < i < n\\
    (d_{n-1} h_{n-1}, \dots, d_1 h_1), & i = n \\
    \end{cases}
    \end{equation*}
    \begin{equation*}
    s_i(h_{n-1}, \dots, h_0)= \begin{cases} (\text{id}_{s(h_{n-1})}, h_{n-1}, \dots, h_0), & i = 0 \\
    (s_{i-1} h_{n-1}, \dots, s_0 h_{n-i},\text{id}_{t(h_{n-i})},h_{n-i-1}, \dots, h_0), & 0 < i. \\
    \end{cases}
\end{equation*}
\end{Def}

\begin{Prop}\label{prop G Kan iso to L G}
There is a natural isomorphism of functors 
\begin{equation}
G^{Kan} \cong \mathcal{L} \circ G,
\end{equation}
where $\mathcal{L}$ denotes the localization functor, i.e. the left adjoint  to the inclusion of simplicial groupoids into simplicial categories.
\end{Prop}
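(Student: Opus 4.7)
The plan is to deduce the isomorphism from the uniqueness of adjoint functors. Composing the adjunction $G \dashv \overline{W}$ from the preceding discussion with the $(\mathcal{L}, \iota)$-adjunction of Definition \ref{localizationofcategories} shows that $\mathcal{L}\circ G$ admits $\overline{W}\circ \iota$ as a right adjoint. On the other hand, the classical theorem of Dwyer--Kan furnishes an adjunction $G^{Kan} \dashv \overline{W}^{Kan}$. Hence, by uniqueness of adjoints, it suffices to construct a natural isomorphism $\overline{W}\circ \iota \cong \overline{W}^{Kan}$ of functors $\cat{sGpd}\to\cat{sSet}$.

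Fix a simplicial groupoid $\mathcal{G}$. By the defining formula of $\overline{W}$ and the $(\mathcal{L},\iota)$-adjunction,
$$\overline{W}(\iota\mathcal{G})_n \;=\; \cat{Cat}_{\cat{sSet}}(G(\Delta^n),\iota\mathcal{G}) \;\cong\; \cat{sGpd}(\mathcal{L}(G(\Delta^n)),\mathcal{G}).$$
Since $G(\Delta^n) = \mathcal{F}(Q^n)$ is the free simplicial category on the graded quiver $Q^n$, the localization $\mathcal{L}(G(\Delta^n))$ is the free simplicial groupoid on $Q^n$, and hence the right hand side is in natural bijection with graded quiver maps $Q^n \to \mathcal{U}(\mathcal{G})$. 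Such data amounts to a choice of objects $x_0,\dots,x_n \in \text{Obj}(\mathcal{G})$ together with morphisms $h'_i \in \mathcal{G}_{n-i}(x_{i-1},x_i)$ for $i=1,\dots,n$. Reindexing by $k = n-i$ and setting $h_k = h'_{n-k}$ yields a tuple $(h_{n-1},\dots,h_0)$ with $h_k$ of degree $k$ and satisfying $t(h_k) = s(h_{k-1})$, which is exactly the formula for $\overline{W}^{Kan}(\mathcal{G})_n$.

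It remains to check that the face and degeneracy maps induced on $\overline{W}(\iota\mathcal{G})$ by the cosimplicial structure of $G(\Delta^\bullet)$ from Definition \ref{GHin def} agree with those of $\overline{W}^{Kan}(\mathcal{G})$. Each branch of the coface rule translates directly via the bijection above: the branch $d^0(g_j) = g_{j+1}$ for $j>0$ implements the drop of the top-degree entry, matching $d_0(h_{n-1},\dots,h_0) = (h_{n-2},\dots,h_0)$; the middle rule $d^i(g_i) = g_{i+1}d_0 g_i$ produces the composite $h_{n-i-1}\cdot d_0 h_{n-i}$ at the appropriate position of the output tuple; the branch $d^i(g_j) = d_{i-j}g_j$ for $j<i$ produces the required face-map applications on the higher-degree entries; and the terminal case $i=n$ supplies the formula for $d_n$. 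A parallel case analysis matches the codegeneracies $s^i$ with the degeneracy formulas of $\overline{W}^{Kan}$.

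The main obstacle is organizational: the generators $g_i$ of $G(\Delta^n)$ are indexed so that their degree $n-i$ decreases with $i$, whereas the tuples $(h_{n-1},\dots,h_0)$ in $\overline{W}^{Kan}(\mathcal{G})_n$ are indexed by decreasing degree, so one must keep the reindexing $k = n-i$ straight when matching the piecewise formulas. Once this bookkeeping is fixed, the natural isomorphism $\overline{W}\circ\iota \cong \overline{W}^{Kan}$ is manifest and visibly natural in $\mathcal{G}$, and by uniqueness of left adjoints we conclude $G^{Kan} \cong \mathcal{L}\circ G$.
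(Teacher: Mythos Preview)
Your proof is correct and follows essentially the same approach as the paper: both deduce the isomorphism from uniqueness of adjoints by comparing $\overline{W}\circ\iota$ with $\overline{W}^{Kan}$, with your write-up supplying more of the bookkeeping that the paper leaves implicit. The paper additionally records, ``for completeness,'' an explicit cosimplicial formula $\eta_{\Delta^n}\colon g_i\mapsto \overline{[i-1\dots n]}$ exhibiting the isomorphism $\mathcal{L}G(\Delta^\bullet)\cong G^{Kan}(\Delta^\bullet)$ directly, which you do not need but which is used later in the paper.
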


\begin{proof} 
Since  $\conj{W}^{Kan}$ and $G^{Kan}$ are adjoint functors, as proven in \cite{dwyer_kan_1984}, this proposition follows by comparing the definitions of $\conj{W} \circ \iota$ and $\conj{W}^{Kan}$ and taking adjoints.

For completeness, we describe the isomorphism directly. Define $$\eta_{\Delta^n}: G(\Delta^n) \xrightarrow{} G^{Kan}(\Delta^n)$$ to be the identity on objects, and for any $i \in [n]$, $$\eta_{\Delta^n}: G(\Delta^n)(i-1,i) \xrightarrow{} G^{Kan}(\Delta^n)(i-1,i)$$ is determined by 
\begin{equation}
    g_i \mapsto \conj{[i-1 \dots n]},
\end{equation}
where $[i \dots j]$ denotes the $(j-i)$-simplex in $\Delta^n$ determined by the vertices $\{ i, i+1, \dots, j \} \subseteq [n]$.  By the universal property of the localization functor, $\eta_{\Delta^n}$ gives rise to an isomorphism $$\widetilde{\eta_{\Delta^n}}: \mathcal{L}G(\Delta^n) \to G^{Kan}(\Delta^n)$$ of simplicial groupoids for all $n \geq0$. These maps give rise to an isomorphism of cosimplicial simplicial groupoids $$\widetilde{\eta_{\Delta^{\bullet}}}:\mathcal{L}G(\Delta^{\bullet}) \to G^{Kan}(\Delta^{\bullet})$$ which induces a natural isomorphism $\mathcal{L} \circ G \xRightarrow{} G^{Kan}$.

\end{proof}
\subsection{The rigidification functor}
\begin{Def}\label{C def}
Given a standard $n$-simplex $\Delta^n$, define a simplicial category $\mathfrak{C}(\Delta^n)$ as follows:
\begin{enumerate}
    \item $\text{Obj} \, \mathfrak{C}(\Delta^n) = [n] = \lbrace 0, 1, \dots, n \rbrace$.
    \item If $i,j \in [n]$, then $\mathfrak{C}(\Delta^n)(i,j) = \begin{cases}
    \emptyset & \text{if }i > j \\
    N(P_{i,j}) \cong (\Delta^1)^{ \times (j - i -1)} & \text{if } i < j\\
    \Delta^0 & \text{if } i=j 
    \end{cases}$\\
    where $N$ is the nerve functor and, for $i<j$,  $P_{i,j}$ is the poset whose elements are subsets \[U = \{i, i_0, \dots, i_m, j \} \subseteq \{i,i+1, \dots, j-1,j\}\]
    and $U \leq V$ if $V \subseteq U$.
    \item If $i_0 \leq \dots \leq i_{m}$, then the composition
    \begin{equation*} 
        \mathfrak{C}(\Delta^n)(i_{m-1},i_{m}) \times \dots \times \mathfrak{C}(\Delta^n)(i_0,i_1) \to \mathfrak{C}(\Delta^n)(i_0,i_m)
    \end{equation*}
    is induced by the map of the posets
    \begin{equation*}
    \begin{aligned}
        & P_{i_{m-1},i_{m}} \times \dots \times P_{i_0,i_1} \to P_{i_0, i_{m}} \\
        &(U_{m-1}, \dots, U_0) \mapsto U_{m-1} \cup \dots \cup U_0.
        \end{aligned}
    \end{equation*}
\end{enumerate}

Each $0$-morphism in $\mathfrak{C}(\Delta^n)(i,j)$ corresponds to a subset $\lbrace i, i_0, \dots, i_m, j \rbrace \subset [n]$ with $i<i_0< \dots <i_m<j$. Hence, each $0$-morphism can be written as a composition $\lbrace i_m, j \rbrace  \times \dots \times \lbrace i, i_0 \rbrace \mapsto \lbrace i, i_0, \dots, i_m, j \rbrace$. Each set $(\mathfrak{C}(\Delta^n)(i,j))_0 =N(P_{i,j})_0$ has a special element $\{i,j\}$, we call $0$-morphisms of this form \textbf{indecomposable}. It follows that any $0$-morphism is a unique composition of indecomposable $0$-morphisms.

Each $k$-simplex in $\mathfrak{C}(\Delta^n)(p,q)$, where $0 \leq k \leq q - p - 1$ is given by a sequence $$\{ p, q \} \geq \{ p, i_1, q \} \geq \{ p, i_1, i_2, q \} \geq \dots \geq \{p, i_1, \dots, i_k, q \}$$
which we can write as the sequence $(i_1, \dots, i_k)$. The empty sequence, denoted $\emptyset$ corresponds to the indecomposable $\{p , q \}$.
Furthermore, the assignment $[n] \mapsto \mathfrak{C} (\Delta^n)$ defines a cosimplicial object in simplicial categories with coface and codegeneracy maps which act in the obvious way on objects and on simplices by
\begin{equation}
    \begin{aligned}
        & d^j: \mathfrak{C}(\Delta^n) \to \mathfrak{C}(\Delta^{n+1}) \\
        & d^j(i_1, \dots, i_k) = (d^j(i_1), \dots, d^j(i_k)) \\
        & s^j: \mathfrak{C}(\Delta^n) \to \mathfrak{C}(\Delta^{n-1}) \\
        & s^j(i_1, \dots, i_k) = (s^j(i_1), \dots, s^j(i_k))).
    \end{aligned}
\end{equation}
For any simplicial set $X \in \cat{sSet}$, define 
$$\mathfrak{C}(X)= \underset{\sigma: \Delta^n \to X}{\text{colim}} \mathfrak{C}(\Delta^n).$$
This construction gives rise to a functor $\mathfrak{C}: \cat{sSet} \to \cat{Cat}_{\cat{sSet}}$ called the \textbf{rigidification functor}. 
\end{Def}

\begin{Rem}
The functor $\mathfrak{C}$ given above is precisely \cite[Definition 1.1.5.1]{lurie2006higher} but with opposite underlying posets $P_{i,j}$, and agrees with the definition given in \cite[Section 2.2]{hinich2007homotopy}.
\end{Rem}

\begin{Def} \label{homotopy coherent nerve}
The \textbf{homotopy coherent nerve} $\mathfrak{N}: \cat{Cat}_{\cat{sSet}} \to \cat{sSet}$ is given by \begin{equation}
    (\mathfrak{N} C)_n \coloneqq \cat{Cat}_{\cat{sSet}}(\mathfrak{C}(\Delta^n), C),
\end{equation}
on any $C \in \cat{Cat}_{\cat{sSet}}$. This provides us with an adjunction
\begin{equation} \label{homotopy coherent adjunction}
    \mathfrak{C}: \cat{sSet} \rightleftarrows \cat{Cat}_{\cat{sSet}} : \mathfrak{N},
\end{equation}
which is actually a Quillen equivalence when $\cat{sSet}$ is equipped with Joyal's model structure and $\cat{Cat}_{\cat{sSet}}$ with Bergner's model structure, see Theorem \ref{rigidification} in the Appendix. 
\end{Def}

\subsection{Rigidification in terms of necklaces}\label{necklaces}

We recall a description of the mapping spaces $\mathfrak{C}(X)(x,y)$ given in \cite{DuSp11} in terms of the framework of necklaces. 

\begin{Def}

A \textbf{necklace} is simplicial set of the form 
$$N=\Delta^{n_1} \vee ... \vee \Delta^{n_k}$$ 
obtained from an ordered sequence of standard simplices with $n_i>0$ for $i=1,...,k$, by identifying the final vertex of one to the first vertex of its successor. The standard simplex $\Delta^{n_i}$ in the sequence is called the $i$-th \textbf{bead} of the necklace $N$. Define $\text{dim}(N)\coloneqq n_1+...+n_k-k$. We consider $\Delta^0$ as a necklace of dimension $0$. 

Any necklace $N$ has a natural ordering on its vertices given by the ordering of the beads of $N$ and the ordering of the vertices on each bead. Denote by $\alpha_N$ and $\omega_N$ the first and last vertices of $N$. Necklaces are the objects of a category $\cat{Nec}$ with morphisms being maps of simplicial sets $f: N \to N'$ such that $f(\alpha_N)=\alpha_{N'}$ and $f(\omega_N)=\omega_{N'}$. For $X \in \cat{sSet}$ and $x,y\in X_0$ denote by $(\cat{Nec} \downarrow X)_{x,y}$ the full subcategory of the over category $(\cat{Nec} \downarrow X)$ consisting of those maps $f: N\to X$ such that $f(\alpha_N)=x$ and $f(\omega_N)=y$. 

The category $\cat{Nec}$ of necklaces has a non-symmetric monoidal structure 
$$\vee: \cat{Nec} \times \cat{Nec} \to \cat{Nec}$$
given by concatenating necklaces. The unit object in the monoidal structure is $\Delta^0.$

The morphisms of $\cat{Nec}$ are generated through the monoidal structure by the following four types of morphisms:

\begin{enumerate}
	\item $\partial_j \colon \Delta^{n-1} \hookrightarrow \Delta^n$ for $j = 1, \dots, n-1$,
	\item $\Delta_{[j], [n-j]} \colon \Delta^{j} \vee \Delta^{n-j} \hookrightarrow \Delta^n$ for $j = 1, \dots, n-1$,
	\item $s_j \colon \Delta^{n+1} \twoheadrightarrow \Delta^{n}$ for $j = 0, \dots, n$ and $n>0$, and
	\item $s_0 \colon \Delta^1 \twoheadrightarrow \Delta^0$.
	\end{enumerate}
\end{Def}

The mapping spaces $\mathfrak{C}(X)(x,y)$ are obtained by gluing simplicial cubes labeled by necklaces in $X$ from $x$ to $y$. 

\begin{Prop}[{\cite{DuSp11}}]\label{dsnecklaces}
For any $X \in \cat{sSet}$ and $x,y \in X$, there are natural isomorphisms of simplicial sets
$$\mathfrak{C}(X)(x,y) \cong \underset{(f: N \to X) \in (\cat{Nec} \downarrow X)_{x,y}}{\emph{colim}} \mathfrak{C}(N)(\alpha_N, \omega_N) \cong \underset{(f: N \to X) \in (\cat{Nec} \downarrow X)_{x,y}}{\emph{colim}}  (\Delta^{1})^{\times\emph{dim}(N)}.$$ Furthermore, the composition $\mathfrak{C}(X)(y,z) \times \mathfrak{C}(x,y) \to \mathfrak{C}(X)(x,z)$ is induced by the concatenation of necklaces $\vee: \cat{Nec} \times \cat{Nec} \to \cat{Nec}$.
\end{Prop}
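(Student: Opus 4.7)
The plan is to construct mutually inverse natural maps between the two sides and check compatibility with composition; this is essentially the strategy carried out in \cite{DuSp11}. Note that it suffices to prove the first isomorphism, since the second follows from the bead-by-bead analysis of $\mathfrak{C}(N)$ below.

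For $X = \Delta^n$ with $x = 0, y = n$, Definition \ref{C def} gives $\mathfrak{C}(\Delta^n)(0, n) \cong (\Delta^1)^{\times(n-1)}$. The object $\text{id}: \Delta^n \to \Delta^n$ is terminal in $(\cat{Nec} \downarrow \Delta^n)_{0, n}$ (any $f: N \to \Delta^n$ sending endpoints to $0, n$ factors uniquely through it), so the colimit over this indexing category collapses to $\mathfrak{C}(\Delta^n)(0, n)$. For a general necklace $N = \Delta^{n_1} \vee \dots \vee \Delta^{n_k}$, the composition rule of Definition \ref{C def}(3) applied bead-by-bead yields
$$\mathfrak{C}(N)(\alpha_N, \omega_N) \cong \prod_{i=1}^{k} \mathfrak{C}(\Delta^{n_i})(0, n_i) \cong (\Delta^1)^{\times \dim N},$$
so the second isomorphism is immediate.

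For general $X$, I would define
$$\Phi_{x, y}: \underset{(f: N \to X) \in (\cat{Nec} \downarrow X)_{x, y}}{\text{colim}} \mathfrak{C}(N)(\alpha_N, \omega_N) \longrightarrow \mathfrak{C}(X)(x, y)$$
by sending $\tau \in \mathfrak{C}(N)(\alpha_N, \omega_N)$ indexed at $f: N \to X$ to $\mathfrak{C}(f)(\tau)$; naturality is functoriality of $\mathfrak{C}$. For the inverse, I would use that $\mathfrak{C}(X) = \text{colim}_{\sigma: \Delta^n \to X} \mathfrak{C}(\Delta^n)$ is computed in $\cat{Cat}_{\cat{sSet}}$, so a $k$-simplex of $\mathfrak{C}(X)(x, y)$ can be represented by a composable string $\tau_k \cdot \dots \cdot \tau_1$ where $\tau_i \in \mathfrak{C}(\Delta^{n_i})(a_i, b_i)_k$ is attached to a simplex $\sigma_i: \Delta^{n_i} \to X$, subject to $\sigma_1(a_1) = x$, $\sigma_k(b_k) = y$, and $\sigma_i(b_i) = \sigma_{i+1}(a_{i+1})$. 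Restricting each $\sigma_i$ along the face inclusion $[a_i, b_i] \hookrightarrow \Delta^{n_i}$ assembles into a map of necklaces $f: N \to X$ with $N = \Delta^{b_1 - a_1} \vee \dots \vee \Delta^{b_k - a_k}$, while the $\tau_i$ package into a single $k$-simplex of $\mathfrak{C}(N)(\alpha_N, \omega_N) = \prod_i \mathfrak{C}(\Delta^{b_i - a_i})(0, b_i - a_i)$. This defines the candidate inverse $\Psi_{x, y}$.

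The main technical hurdle is to verify that $\Phi$ and $\Psi$ are mutually inverse, i.e. that the identifications enforced by the colimit computation in $\cat{Cat}_{\cat{sSet}}$ correspond exactly to those generated by morphisms in $(\cat{Nec} \downarrow X)_{x, y}$. Using the four classes of generators of $\cat{Nec}$ listed above, one obtains a clean dictionary: inner faces $\partial_j$ encode non-outer face identifications among the $\sigma_i$; the splitting morphisms $\Delta_{[j], [n-j]}$ encode the composition law of Definition \ref{C def}(3) that splits a subset $[a, b]$ at an intermediate vertex; the degeneracies $s_j$ together with the edge-collapse $s_0: \Delta^1 \to \Delta^0$ encode degenerate simplices of $X$ and the absorption of identity beads. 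The composition claim is built into $\Psi$ by construction: the monoidal structure $\vee$ on $\cat{Nec}$ induces concatenation of composable strings, which is precisely how composition in the colimit $\mathfrak{C}(X)$ is defined.
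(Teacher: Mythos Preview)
The paper does not supply its own proof of this proposition; it is stated with a citation to \cite{DuSp11} and used as a black box. Your sketch is a faithful outline of the Dugger--Spivak argument: identify $\mathfrak{C}(N)(\alpha_N,\omega_N)$ bead-by-bead, build the comparison map $\Phi$ from functoriality of $\mathfrak{C}$, and construct the inverse $\Psi$ by representing morphisms in the colimit $\mathfrak{C}(X)$ as composable strings coming from simplices of $X$, then repackaging each string as a necklace.

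One place where your sketch is thinner than the actual \cite{DuSp11} argument is the well-definedness of $\Psi$. You assert that a $k$-simplex of $\mathfrak{C}(X)(x,y)$ ``can be represented'' by a composable string of local data, but colimits in $\cat{Cat}_{\cat{sSet}}$ are not computed level-wise on hom-sets, so this representation is not immediate from the definition $\mathfrak{C}(X)=\text{colim}_{\sigma}\mathfrak{C}(\Delta^n)$. Dugger and Spivak handle this by first proving the statement for $X$ a single simplex (your terminal-object observation), then for $X$ a necklace, and only then deducing the general case by a careful cofinality/gluing argument over the simplex category of $X$. Your dictionary between the generating morphisms of $\cat{Nec}$ and the relations in the colimit is correct in spirit, but as written it presupposes exactly the description of $\mathfrak{C}(X)(x,y)$ you are trying to establish. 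If you want this to stand alone rather than as a summary of \cite{DuSp11}, that step needs to be made honest.
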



\subsection{The natural transformation $Sz$} 

In this section we will define a map $Sz: \mathfrak{C}(\Delta^\bullet) \to G(\Delta^\bullet)$ of cosimplicial simplicial categories.

Given non-negative integers $p$, $q$ and $\ell$, let
\begin{equation*}
    S^\ell_{p,q} = \{ i = (i_1, \dots, i_\ell) \in \{ p + 1, \dots, q -1 \}^{\times \ell} \; : \: i_r \neq i_s, \text{ for } r \neq s \}.
\end{equation*}
For $\ell = 0$, we set $S^0_{p,q} = \{ \emptyset \}$, and we call $\emptyset$ the empty sequence.

For a fixed $n \geq 1$, let $0 \leq p < q \leq n$ and $0 \leq \ell \leq q-p-1$. Consider the set $\text{nd}(\mathfrak{C}(\Delta^n)(p,q)_\ell)$ of non-degenerate $\ell$-simplices of $\mathfrak{C}(\Delta^n)(p,q)$. There is an obvious bijection
$\text{nd}(\mathfrak{C}(\Delta^n)(p,q)_\ell) \cong S^\ell_{p,q}$. Thus we implicitly identify sequences $(i_1, \dots, i_\ell)$ as above with non-degenerate $\ell$-simplices.

Now if $i = (i_1, \dots, i_{\ell -1})$ is a non-degenerate $(\ell-1)$-simplex, and $i_\ell \in \{p+1, \dots, q  - 1\} \setminus \{i_1, \dots, i_{\ell -1}\}$, then let $\omega_{(i_1, \dots, i_{\ell -1})}(i_{\ell})$ denote the largest integer in $\{p, i_1, \dots, i_{\ell - 1}, q \}$ such that $\omega_i(i_\ell) < i_\ell$.

Given a sequence $i = (i_1, \dots, i_{\ell - 1}, i_\ell)$, let $i' = (i_1, \dots, i_{\ell - 1})$ and $i^{(k)} = (i_1, \dots, i_{\ell - k})$ for $1 \leq k \leq \ell$ and $1 \leq \ell \leq q-p-1$, where $i^{(\ell)} = \emptyset$. Now for a fixed $n \geq 1$, $0 \leq p < q \leq n$, $p + 1 \leq k \leq q$ and $0 \leq \ell \leq q - p - 1$, define a function
\begin{equation*}
    \alpha_k : S^\ell_{p,q} \to \{0, \dots, n - k \}
\end{equation*}
as follows. If $\ell = 0$, so that $i = \emptyset$, then set $\alpha_k(\emptyset) = q - k$. For nonempty $i = (i_1, \dots, i_\ell)$, define $\alpha_k(i)$ inductively by
\begin{equation}
   \alpha_k(i) = 
   \begin{cases}
  i_\ell - k, & \omega_{i'}(i_\ell) < k \leq i_\ell \\
  \alpha_k(i'), & k \leq \omega_{i'}(i_\ell) \text{ or } i_\ell < k.
    \end{cases}
\end{equation}

\begin{Cons} \label{Sz construction}
We construct a map of simplicial categories $$Sz_{\Delta^n}: \mathfrak{C}(\Delta^n) \to G(\Delta^n)$$ as follows. On objects, $Sz$ is the identity map. For any nondegenerate $\ell$-simplex in $\mathfrak{C}(\Delta^n)(p,q)_\ell$, determined by an ordered sequence $i = (i_1, \dots, i_\ell)$, let $Sz_{\Delta^n}(i)$ be the $\ell$-simplex in 
$$G(\Delta^n)(p,q) \cong N(\langle g_q \rangle^{n-q}) \times \dots \times N(\langle g_{p+1}^{n-(p+1)} \rangle)$$
given by
\begin{equation} \label{Sz formula}
Sz_{\Delta^n}(i) =  \left( \mathcal{E}_{i,q} \, g_q, \, \mathcal{E}_{i,q-1} \, g_{q-1}, \, \dots \, , \, \mathcal{E}_{i, p+2} \, g_{p + 2}, \, \mathcal{E}_{i, p+1} \, g_{p+1} \right),
\end{equation}where the $\mathcal{E}_{i, k}$ for $p+1 \leq k \leq q$ are simplicial operators which we now define. First, given any simplicial operator $\tau = s_{i_0} \dots s_{i_a} d_{j_0} \dots d_{j_b}$, we write $\tau' = s_{i_0 + 1} \dots s_{i_a + 1} d_{j_0 + 1} \dots d_{j_b + 1}$ and denote the iteration of this operation by $\tau'' = (\tau')' = s_{i_0 + 2} \dots d_{j_b + 2}$, and more generally $\tau^{(m)} = s_{i_0 + m} \dots d_{j_b + m}$. Define the operators $\mathcal{E}_{i,k}$ by induction on $\ell$, the length of $i$. For the empty sequence we define
\begin{equation*}
        \mathcal{E}_{\emptyset, k} = d_1^{n-q} \, d_0^{q-k},
\end{equation*}
and for $i \neq \emptyset$ we define
\begin{equation}
    \mathcal{E}_{i,k} = 
    \begin{cases}
        s_0 \, \mathcal{E}_{i',k} & \text{if } \alpha_k(i') = \alpha_k(i) \\
        \mathcal{E}'_{i', k} \, s_0^{\alpha_{k}(i) + 1} \, d_0^{\alpha_k(i)} & \text{if } \alpha_k(i) < \alpha_k(i').
    \end{cases}
\end{equation}
Note that by definition it is not possible for $\alpha_k(i) > \alpha_k(i')$.
\end{Cons}

\begin{Rem}
These simplicial operators $\mathcal{E}_{i,k}$ 
are reminiscent of the operators appearing in \cite[Theorem 2.1]{szczarba1961homology}.
\end{Rem} 

\begin{Prop} \label{prop Sz map of cosimplicial simplicial categories}
Construction \ref{Sz construction} determines a map of cosimplicial simplicial categories $Sz_{\Delta^{\bullet}}: \mathfrak{C}(\Delta^{\bullet}) \xrightarrow{} G(\Delta^{\bullet})$ and consequently a natural transformation 
$$Sz: \mathfrak{C} \xRightarrow{} G.$$
\end{Prop}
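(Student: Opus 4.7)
The strategy is to verify that the collection $\{Sz_{\Delta^n}\}_{n\geq 0}$ commutes with the cosimplicial coface and codegeneracy maps described in Definitions \ref{C def} and \ref{GHin def}, from which the natural transformation $Sz: \mathfrak{C}\Rightarrow G$ follows by taking left Kan extension along the Yoneda embedding $\mathbf{\Delta}\hookrightarrow \cat{sSet}$, using that both $\mathfrak{C}$ and $G$ are defined as the unique colimit-preserving extensions of $\mathfrak{C}(\Delta^\bullet)$ and $G(\Delta^\bullet)$, respectively. Thus the content of the proposition is that for every $i$,
\[
Sz_{\Delta^{n+1}}\circ d^i = d^i \circ Sz_{\Delta^n}, \qquad Sz_{\Delta^{n-1}}\circ s^i = s^i \circ Sz_{\Delta^n}.
\]

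Since both sides are maps of simplicial categories and composition in $\mathfrak{C}(\Delta^n)(p,q)$ is generated (together with the simplicial operators) by the non-degenerate top-dimensional simplices $\nd(\mathfrak{C}(\Delta^n)(p,q)_{q-p-1})$, it suffices to check these identities on such a fixed $\sigma$ with $\gamma(\sigma)=(i_1,\dots,i_{q-p-1})\in\mathcal{I}_{q-p-1}$, and then appeal to functoriality. The argument splits according to the position of $i$ relative to the interval $[p,q]$. In the easy regime $i<p$ or $i>q$, the coface $d^i$ acts on subsets $\{p,j_1,\dots,j_{q-p-1},q\}\subseteq[n]$ purely by a uniform index shift on one side, and similarly $d^i(g_j)$ is either $g_{j+1}$ or $d_{i-j}g_j$ on the generators of $G(\Delta^n)$; the commutativity is then a direct bookkeeping exercise matching shifts of the operators $D^{q-p}_{k,\gamma(\sigma)}$ against the prime-operation $\tau\mapsto\tau^{(m)}$ built into their recursive definition.

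The substantive case is $p<i<q$, where the image $d^i\sigma$ may become decomposable (when $i\in\{p,q\}\cup\{j_1,\dots,j_{q-p-1}\}$) and where the non-trivial cosimplicial rule $d^i(g_i)=g_{i+1}d_0g_i$ of Definition \ref{GHin def} is activated. Here the plan is to prove by induction on $m=q-p$ a combinatorial lemma describing how $D^m_{j,(i_1,\dots,i_{m-1})}$ and $d^i$ intertwine, using the recursion of Construction \ref{Sz construction} and the identity $d^i(g_i)=g_{i+1}d_0g_i$, which accounts precisely for splitting one generator on the right-hand side into the two generators that appear when the inserted vertex breaks the underlying necklace of $\sigma$ into two beads. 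A parallel, and simpler, lemma handles the codegeneracies $s^i$ via $s^i(g_{i+1})=\mathrm{id}$: the factor indexed by $g_{i+1}$ in $Sz_{\Delta^n}(\sigma)$ collapses to an identity, matching the fact that $s^i\sigma$ is degenerate whenever $i+1\in\{p,q\}\cup\{j_1,\dots,j_{q-p-1}\}$, while in the remaining cases $s^i$ again acts by a coherent shift that matches the prime operation on simplicial operators.

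The principal obstacle is therefore the case-$p<i<q$ compatibility with $d^i$: it requires tracking the effect of $d^i$ on the indexing bijection $\gamma=\beta\circ\alpha$ of Section \ref{necklaces} and then showing, inductively on $m=q-p$, that the reparametrized tuple $\gamma(d^i\sigma)$ reproduces the operator string obtained from $d^i(Sz_{\Delta^n}(\sigma))$ after using $d^i(g_i)=g_{i+1}d_0g_i$ and the faces/degeneracies rules on the remaining $g_j$. Granting this combinatorial lemma, the proposition follows: the maps $Sz_{\Delta^\bullet}$ assemble to a morphism of cosimplicial simplicial categories, and consequently to a natural transformation $Sz:\mathfrak{C}\Rightarrow G$ of colimit-preserving functors $\cat{sSet}\to\cat{Cat}_{\cat{sSet}}$.
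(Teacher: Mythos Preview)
Your proposal focuses on verifying the cosimplicial compatibility of the maps $Sz_{\Delta^n}$, but it skips a prior issue that the paper's proof treats as the main content: establishing that each $Sz_{\Delta^n}$ is a well-defined map of \emph{simplicial categories} in the first place. Construction \ref{Sz construction} only specifies $Sz_{\Delta^n}(\sigma)$ for top-dimensional non-degenerate $\sigma$ and extends to faces; but a lower simplex in $\mathfrak{C}(\Delta^n)(p,q)\cong(\Delta^1)^{\times(q-p-1)}$ is typically a face of several top simplices, so consistency must be checked. More importantly, compatibility with the composition $\mathfrak{C}(\Delta^n)(q,r)\times\mathfrak{C}(\Delta^n)(p,q)\to\mathfrak{C}(\Delta^n)(p,r)$ is not evident from the recursive formula for the Szczarba operators. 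Your clause ``since both sides are maps of simplicial categories'' presupposes exactly this.

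The paper resolves both points at once by observing that $\mathfrak{C}(\Delta^n)$ and $G(\Delta^n)$ arise by applying the nerve levelwise to poset-enriched categories $P_{\mathfrak{C}}(\Delta^n)$ and $P_G(\Delta^n)$, and then exhibiting a poset-enriched functor $\Phi$ (given by the simple Hinich-style formula $\Phi(\{p,q\})=d_0^{n-q}g_q\,d_1d_0^{n-q}g_{q-1}\cdots d_1^{q-p-1}d_0^{n-q}g_{p+1}$ on indecomposables) with $N^{\cat{Cat}}(\Phi)=Sz_{\Delta^n}$. Since maps between nerves of posets are determined by their values on vertices, this single identification simultaneously gives well-definedness on lower faces, functoriality with respect to composition, and reduces the cosimplicial check to the much easier verification on $0$-morphisms of the posets. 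Your direct combinatorial verification via induction on $m=q-p$ and the recursion for $D^m_{j,\cdot}$ is a plausible alternative for the cosimplicial half of the argument (and indeed the paper also defers that half to a ``straightforward computation''), but it is considerably heavier than checking the formula for $\Phi(\{p,q\})$ against $d^i$ and $s^i$, and it leaves the functoriality gap unaddressed.
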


\begin{proof}
Recall the adjunction $\tau_1 \dashv N: \cat{sSet} \rightleftarrows \cat{Cat}$, given by the ordinary nerve and fundamental category, is strong monoidal and induces an adjunction $\tau^{\cat{Cat}}_1 \dashv N^{\cat{Cat}}: \cat{Cat}_{\cat{sSet}} \rightleftarrows \cat{Cat}_{\cat{Cat}}$ \cite[Digression 1.4.2]{riehl2018elements}. Furthermore, $N^{\cat{Cat}}$ is fully faithful.
We show $Sz_{\Delta^n} = N^{\cat{Cat}}(\text{Hin})$ for a map $\text{Hin}: P_{\mathfrak{C}}(\Delta^n) \to P_{G}(\Delta^n)$ between poset enriched categories, from which it will follow that $Sz_{\Delta^n}$ is compatible with compositions and consequently defines a map of simplicial categories. 

Let $P_{\mathfrak{C}}(\Delta^n)$ be the underlying poset enriched category of $\mathfrak{C}(\Delta^n)$. Namely, $P_{\mathfrak{C}}(\Delta^n)(i,j)=P^n_{i,j}$ if $i < j$, as given in Definiton \ref{C def}. Similarly, let $P_G(\Delta^n)$ be the underlying poset enriched category of $G(\Delta^n)$. Namely, $$P_G(\Delta^n)(i,j)=\langle g_j \rangle^{n-j} \times \cdots \times \langle g_{i+1} \rangle^{n-(i+1)}$$
for $i < j$, and $P_G(\Delta^n)(j,j) = \{ \text{id}_j \}$, the trivial poset, as given in Definition \ref{GHin def}. We have $N^{\cat{Cat}}(P_{\mathfrak{C}}(\Delta^n))=\mathfrak{C}(\Delta^n)$ and $N^{\cat{Cat}}(P_G(\Delta^n))= G(\Delta^n)$.

Define $\text{Hin} : P_{\mathfrak{C}}(\Delta^n) \to P_{G}(\Delta^n)$ to be the identity on objects. The objects in each poset $P_{\mathfrak{C}}(\Delta^n)(p,q)$ can be written uniquely as a composition of indecomposable $0$-morphisms. Therefore it is sufficient to define $\text{Hin} : P_{\mathfrak{C}}(\Delta^n)(p,q) \to P_{G}(\Delta^n)(p,q)$ on indecomposable $0$-morphisms. Given the unique indecomposable $\{p, q\} \in P_{\mathfrak{C}}(\Delta^n)(p,q)$, let
\begin{equation} \label{eqn hinich map}
\text{Hin}(\{p,q\}) = (d_1^{n-q} g_q, \, d_1^{n-q} \, d_0 \, g_{q-1}, \, \dots \, , \, d_1^{n-q} \, d_0^{q-p-1} \, g_{p+1}).\footnote{this map appears as $\psi$ in \cite[Section 2.6.1]{hinich2007homotopy}}
\end{equation}

We may extend this to a map on all morphisms by requiring it to be compatible with compositions. The map $\text{Hin}$ preserves the poset structures. To verify this it is sufficient to note
\begin{equation*}
    \text{Hin}(\{ p , q \}) \geq \text{Hin}(\{ p, k , q \}) = \text{Hin}(\{k,q \}) \text{Hin}(\{p, k \})
\end{equation*}
for $p < k < q$. This follows from
\begin{equation*}
    \text{Hin}(\{ p, k , q\}) = (d_1^{n-q} g_q, \, \dots \, , d_1^{n-q} \, d_0^{q - (k + 1)} g_{k+1}, \, d_1^{n-k} g_k, \,  \dots \, , \, d_1^{n-k} d_0^{k-(p + 1)} g_{p+1}),
\end{equation*}
together with the fact that the poset structure $P_G(\Delta^n)(p,q)$ is given by the product of posets and every component of $\text{Hin}(\{p, k, q \})$ is less than the corresponding component of $\text{Hin}(\{p, q \})$. This defines a map of cosimplicial poset-enriched categories $\text{Hin} : P_\mathfrak{C}(\Delta^\bullet) \to P_G(\Delta^{\bullet})$, which after applying $N^{\cat{Cat}}$ we obtain a map of cosimplicial simplicial categories. From a straightforward computation, one can check that $Sz \cong N^{\cat{Cat}} (\text{Hin})$.
\end{proof}

\begin{Ex}
Consider the $2$-simplex in $\mathfrak{C}(\Delta^3)(0,3)_2$ given by 
$$
\{0,3\} \geq \{0,2,3\} \geq \{0,1,2,3\}.
$$
This simplex corresponds to the sequence $i = (2,1)$. So with $n = 3, \, p = 0, \, q = 3$, we compute
\begin{equation*}
    \alpha_3(\emptyset) = 0, \; \alpha_2(\emptyset) = 1, \; \alpha_1(\emptyset) = 2
\end{equation*}
\begin{equation*}
   \omega_\emptyset(2) = 0, \; \alpha_3(2) = 0, \; \alpha_2(2) = 0, \; \alpha_1(2) = 1 
\end{equation*}
\begin{equation}
\omega_{(2)}(1) = 0, \; \alpha_3(2,1) = 0, \; \alpha_2(2,1) = 0, \; \alpha_1(2,1) = 0.
\end{equation}
With this we can then compute
\begin{equation}
    \begin{aligned}
    Sz_{\Delta^3}(i) & = (\mathcal{E}_{(2,1), 3} \, g_3, \, \mathcal{E}_{(2,1),2} \, g_2, \, \mathcal{E}_{(2,1),1} \, g_1) \\
    & = (s_0 \mathcal{E}_{(2),3} \, g_3, \, s_0 \mathcal{E}_{(2),2} \, g_2, \, \mathcal{E}'_{(2), 1} s_0 g_1) \\
    & = (s_0^2 \mathcal{E}_{\emptyset, 3} \, g_3, \, s_0 \mathcal{E}'_{\emptyset, 2} s_0 g_2, \, \mathcal{E}''_{\emptyset, 1} s_1^2 d_1 s_0 g_1)  \\
    & = (s_0^2 g_3, \, s_0 d_1 s_0 g_2, \, d_2^2 s_1^2 g_1) \\
    & = (s_0^2 g_3, \, s_0 g_2, \, g_1).
    \end{aligned}
\end{equation} 
A similar computation gives 
\begin{equation*}
    Sz_{\Delta^3}(1,2) = (s_0^2 g_3, \, s_1 g_2, \, s_0 d_1 g_1).
\end{equation*}
\end{Ex}

We include a diagram illustrating the map $Sz_{\Delta^3} : \mathfrak{C}(\Delta^3)(0,3) \to G(\Delta^3)(0,3)$.

\vspace{.5cm}

\tikzset{every picture/.style={line width=0.75pt}} 

\begin{tikzpicture}[x=0.75pt,y=0.75pt,yscale=-1,xscale=1]

\draw (155,132.4) node [anchor=north west][inner sep=0.75pt]    {$\{0,3\}$};
\draw (145,202.4) node [anchor=north west][inner sep=0.75pt]    {$\{0,1,3\}$};
\draw (21,202.4) node [anchor=north west][inner sep=0.75pt]    {$\{0,1,2,3\}$};
\draw (27,132.4) node [anchor=north west][inner sep=0.75pt]    {$\{0,2,3\}$};
\draw (215.67,190.35) node [anchor=north west][inner sep=0.75pt]  [color={rgb, 255:red, 208; green, 2; blue, 27 }  ,opacity=1 ]  {$\left( g_{3} ,d_{1} g_{2} ,d_{1}^{2} g_{1}\right)$};
\draw (221,102.4) node [anchor=north west][inner sep=0.75pt]  [color={rgb, 255:red, 208; green, 2; blue, 27 }  ,opacity=1 ]  {$( g_{3} ,d_{1} g_{2} ,d_{1} d_{0} g_{1})$};
\draw (351,241.4) node [anchor=north west][inner sep=0.75pt]    {$\left( g_{3} ,d_{1} g_{2} ,d_{0}^{2} g_{1}\right)$};
\draw (455,70.4) node [anchor=north west][inner sep=0.75pt]    {$( g_{3} ,d_{0} g_{2} ,d_{1} d_{0} g_{1})$};
\draw (415,136.66) node [anchor=north west][inner sep=0.75pt]  [color={rgb, 255:red, 208; green, 2; blue, 27 }  ,opacity=1 ]  {$\left( g_{3} ,d_{0} g_{2} ,d_{1}^{2} g_{1}\right)$};
\draw (501,202.4) node [anchor=north west][inner sep=0.75pt]  [color={rgb, 255:red, 208; green, 2; blue, 27 }  ,opacity=1 ]  {$\left( g_{3} ,d_{0} g_{2} ,d_{0}^{2} g_{1}\right)$};
\draw    (96,210) -- (140,210) ;
\draw [shift={(142,210)}, rotate = 180] [color={rgb, 255:red, 0; green, 0; blue, 0 }  ][line width=0.75]    (10.93,-3.29) .. controls (6.95,-1.4) and (3.31,-0.3) .. (0,0) .. controls (3.31,0.3) and (6.95,1.4) .. (10.93,3.29)   ;
\draw    (173.34,198) -- (174.6,154) ;
\draw [shift={(174.66,152)}, rotate = 91.64] [color={rgb, 255:red, 0; green, 0; blue, 0 }  ][line width=0.75]    (10.93,-3.29) .. controls (6.95,-1.4) and (3.31,-0.3) .. (0,0) .. controls (3.31,0.3) and (6.95,1.4) .. (10.93,3.29)   ;
\draw    (56.66,198) -- (55.4,154) ;
\draw [shift={(55.34,152)}, rotate = 88.36] [color={rgb, 255:red, 0; green, 0; blue, 0 }  ][line width=0.75]    (10.93,-3.29) .. controls (6.95,-1.4) and (3.31,-0.3) .. (0,0) .. controls (3.31,0.3) and (6.95,1.4) .. (10.93,3.29)   ;
\draw    (86,140) -- (150,140) ;
\draw [shift={(152,140)}, rotate = 180] [color={rgb, 255:red, 0; green, 0; blue, 0 }  ][line width=0.75]    (10.93,-3.29) .. controls (6.95,-1.4) and (3.31,-0.3) .. (0,0) .. controls (3.31,0.3) and (6.95,1.4) .. (10.93,3.29)   ;
\draw    (77.23,198) -- (153.05,153.02) ;
\draw [shift={(154.77,152)}, rotate = 149.32] [color={rgb, 255:red, 0; green, 0; blue, 0 }  ][line width=0.75]    (10.93,-3.29) .. controls (6.95,-1.4) and (3.31,-0.3) .. (0,0) .. controls (3.31,0.3) and (6.95,1.4) .. (10.93,3.29)   ;
\draw [color={rgb, 255:red, 208; green, 2; blue, 27 }  ,draw opacity=1 ]   (274.22,185.95) -- (281.24,125.99) ;
\draw [shift={(281.48,124)}, rotate = 96.68] [color={rgb, 255:red, 208; green, 2; blue, 27 }  ,draw opacity=1 ][line width=0.75]    (10.93,-3.29) .. controls (6.95,-1.4) and (3.31,-0.3) .. (0,0) .. controls (3.31,0.3) and (6.95,1.4) .. (10.93,3.29)   ;
\draw    (318.56,220.95) -- (359.24,236.29) ;
\draw [shift={(361.11,237)}, rotate = 200.67] [color={rgb, 255:red, 0; green, 0; blue, 0 }  ][line width=0.75]    (10.93,-3.29) .. controls (6.95,-1.4) and (3.31,-0.3) .. (0,0) .. controls (3.31,0.3) and (6.95,1.4) .. (10.93,3.29)   ;
\draw    (294.28,124) -- (391.01,235.49) ;
\draw [shift={(392.32,237)}, rotate = 229.06] [color={rgb, 255:red, 0; green, 0; blue, 0 }  ][line width=0.75]    (10.93,-3.29) .. controls (6.95,-1.4) and (3.31,-0.3) .. (0,0) .. controls (3.31,0.3) and (6.95,1.4) .. (10.93,3.29)   ;
\draw    (348,102.11) -- (450.02,88.16) ;
\draw [shift={(452,87.89)}, rotate = 172.21] [color={rgb, 255:red, 0; green, 0; blue, 0 }  ][line width=0.75]    (10.93,-3.29) .. controls (6.95,-1.4) and (3.31,-0.3) .. (0,0) .. controls (3.31,0.3) and (6.95,1.4) .. (10.93,3.29)   ;
\draw [color={rgb, 255:red, 208; green, 2; blue, 27 }  ,draw opacity=1 ]   (331.67,187.42) -- (410.07,166.31) ;
\draw [shift={(412,165.79)}, rotate = 164.93] [color={rgb, 255:red, 208; green, 2; blue, 27 }  ,draw opacity=1 ][line width=0.75]    (10.93,-3.29) .. controls (6.95,-1.4) and (3.31,-0.3) .. (0,0) .. controls (3.31,0.3) and (6.95,1.4) .. (10.93,3.29)   ;
\draw    (482.75,132.26) -- (507.56,93.68) ;
\draw [shift={(508.64,92)}, rotate = 122.74] [color={rgb, 255:red, 0; green, 0; blue, 0 }  ][line width=0.75]    (10.93,-3.29) .. controls (6.95,-1.4) and (3.31,-0.3) .. (0,0) .. controls (3.31,0.3) and (6.95,1.4) .. (10.93,3.29)   ;
\draw    (520.86,92) -- (551.74,196.08) ;
\draw [shift={(552.31,198)}, rotate = 253.47] [color={rgb, 255:red, 0; green, 0; blue, 0 }  ][line width=0.75]    (10.93,-3.29) .. controls (6.95,-1.4) and (3.31,-0.3) .. (0,0) .. controls (3.31,0.3) and (6.95,1.4) .. (10.93,3.29)   ;
\draw [color={rgb, 255:red, 208; green, 2; blue, 27 }  ,draw opacity=1 ]   (494.39,167.26) -- (533.02,196.79) ;
\draw [shift={(534.61,198)}, rotate = 217.39] [color={rgb, 255:red, 208; green, 2; blue, 27 }  ,draw opacity=1 ][line width=0.75]    (10.93,-3.29) .. controls (6.95,-1.4) and (3.31,-0.3) .. (0,0) .. controls (3.31,0.3) and (6.95,1.4) .. (10.93,3.29)   ;
\draw    (467,239.03) -- (496.06,231.47) ;
\draw [shift={(498,230.97)}, rotate = 165.43] [color={rgb, 255:red, 0; green, 0; blue, 0 }  ][line width=0.75]    (10.93,-3.29) .. controls (6.95,-1.4) and (3.31,-0.3) .. (0,0) .. controls (3.31,0.3) and (6.95,1.4) .. (10.93,3.29)   ;
\draw [color={rgb, 255:red, 208; green, 2; blue, 27 }  ,draw opacity=1 ]   (331.67,205.96) -- (496,212.9) ;
\draw [shift={(498,212.99)}, rotate = 182.42] [color={rgb, 255:red, 208; green, 2; blue, 27 }  ,draw opacity=1 ][line width=0.75]    (10.93,-3.29) .. controls (6.95,-1.4) and (3.31,-0.3) .. (0,0) .. controls (3.31,0.3) and (6.95,1.4) .. (10.93,3.29)   ;
\draw [color={rgb, 255:red, 208; green, 2; blue, 27 }  ,draw opacity=1 ]   (317.15,124) -- (509.66,197.29) ;
\draw [shift={(511.53,198)}, rotate = 200.84] [color={rgb, 255:red, 208; green, 2; blue, 27 }  ,draw opacity=1 ][line width=0.75]    (10.93,-3.29) .. controls (6.95,-1.4) and (3.31,-0.3) .. (0,0) .. controls (3.31,0.3) and (6.95,1.4) .. (10.93,3.29)   ;
\draw    (306.6,185.95) -- (374.91,151.22)(384.71,146.24) -- (489.64,92.91) ;
\draw [shift={(491.42,92)}, rotate = 153.06] [color={rgb, 255:red, 0; green, 0; blue, 0 }  ][line width=0.75]    (10.93,-3.29) .. controls (6.95,-1.4) and (3.31,-0.3) .. (0,0) .. controls (3.31,0.3) and (6.95,1.4) .. (10.93,3.29)   ;

\end{tikzpicture}

\vspace{.5cm}

The diagram on the left is an illustration of the nondegenerate simplices in $\mathfrak{C}(\Delta^3)(0,3) \cong \Delta^1 \times \Delta^1$ and similarly on the right for $G(\Delta^3)(0,3) \cong \Delta^0 \times \Delta^1 \times \Delta^2$. The red subdiagram on the right shows the image of the Szczarba map.

\section{Simplicial models for the path category}

The goal of this section is to prove Theorem \ref{theorem1} from the introduction. In particular, we will prove that for any simplicial set there are natural weak equivalences of simplicial categories $$\widehat{\mathfrak{C}}(X) \longrightarrow \mathbb{C}(X) \xrightarrow{Sz_X} \mathbb{G}(X).$$ We then show that these three simplicial categories are connected via natural weak equivalences to the  the path category $\mathbb{P}(X)$. 

\subsection{Comparing the classifying space and homotopy coherent nerve functors} 
The main technical result in this section is the following.

\begin{Prop}\label{WandN}
For any simplicial groupoid $C \in \cat{Gpd}_{\cat{sSet}}$, the natural transformation $\mathfrak{C} \xRightarrow{\text{Sz}} G$ induces a weak homotopy equivalence of simplicial sets $\conj{W}\iota(C) \to \mathfrak{N}\iota(C)$.
\end{Prop}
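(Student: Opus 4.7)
The plan is to reduce to a Reedy-cofibrancy comparison in the Bergner model category on $\cat{Cat}_{\cat{sSet}}$. Using the adjunctions $(\mathfrak{C},\mathfrak{N})$, $(G,\conj{W})$, and $(\mathcal{L},\iota)$, one obtains natural level-wise identifications
\begin{equation*}
\conj{W}\iota(C)_n = \cat{sGpd}\bigl(G^{Kan}(\Delta^n),\, C\bigr), \qquad \mathfrak{N}\iota(C)_n = \cat{sGpd}\bigl(\mathcal{L}\mathfrak{C}(\Delta^n),\, C\bigr),
\end{equation*}
under which the map induced by $Sz$ becomes precomposition with the cosimplicial map $\mathcal{L}Sz_{\Delta^{\bullet}}: \mathcal{L}\mathfrak{C}(\Delta^{\bullet}) \to G^{Kan}(\Delta^{\bullet})$ of simplicial groupoids.

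The core step will be to verify that each $Sz_{\Delta^n}: \mathfrak{C}(\Delta^n)\to G(\Delta^n)$ is a weak equivalence in the Bergner model structure. Three observations suffice: (i) $Sz_{\Delta^n}$ is the identity on objects by construction; (ii) the mapping simplicial sets $\mathfrak{C}(\Delta^n)(i,j) \cong (\Delta^1)^{\times(j-i-1)}$ and $G(\Delta^n)(i,j) \cong \Delta^{n-j}\times\cdots\times\Delta^{n-i-1}$ are non-empty and contractible precisely when $i\le j$, so any map between them is automatically a weak homotopy equivalence; (iii) both homotopy categories coincide with the poset $[n]$, and $Sz_{\Delta^n}$ descends to the identity functor, hence is essentially surjective. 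Since $\mathfrak{C}(\Delta^n)$ and $G(\Delta^n)$ are freely built from their generating graded quivers they are cofibrant in Bergner, so Ken Brown's lemma applied to the left Quillen functor $\mathcal{L}$ (from the Quillen adjunction recalled in the previous section) will give that $\mathcal{L}Sz_{\Delta^n}$ is a Dwyer-Kan weak equivalence between cofibrant simplicial groupoids.

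To conclude I would invoke the standard Reedy machinery. The cosimplicial objects $\mathfrak{C}(\Delta^{\bullet})$ and $G(\Delta^{\bullet})$ should be Reedy cofibrant in Bergner, since their coface maps successively adjoin free generators so that the resulting latching maps are free extensions of simplicial categories, hence Bergner cofibrations. This Reedy cofibrancy is preserved by the left Quillen $\mathcal{L}$, so $\mathcal{L}\mathfrak{C}(\Delta^{\bullet})$ and $G^{Kan}(\Delta^{\bullet})$ become Reedy cofibrant cosimplicial objects in $\cat{sGpd}_{DK}$ with $\mathcal{L}Sz_{\Delta^{\bullet}}$ a Reedy weak equivalence between them. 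Because every simplicial groupoid is Dwyer-Kan fibrant, the standard fact that precomposition with a Reedy weak equivalence between Reedy cofibrant cosimplicial objects induces a weak equivalence on the associated simplicial sets $\cat{sGpd}(-,C)$ will yield the desired weak homotopy equivalence $\conj{W}\iota(C) \to \mathfrak{N}\iota(C)$.

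The main obstacle I expect is the careful verification of Reedy cofibrancy: one must unwind the explicit formulas for the coface maps $d^i$ in each case and confirm that at each level they correspond to adjoining fresh free generators rather than identifying existing cells, so that the latching maps genuinely are Bergner cofibrations. A secondary technical point is confirming that the simplicial set $\cat{sGpd}(-,C)$ built from a Reedy cofibrant cosimplicial resolution really does enjoy the homotopy invariance property invoked above, which amounts to recognizing it as a model for a derived mapping space in $\cat{sGpd}_{DK}$.
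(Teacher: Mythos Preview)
Your approach is correct and genuinely different from the paper's. The paper reduces to the case where $C$ is a simplicial group and computes $\pi_n$ of both sides explicitly: using the various adjunctions together with the necklace description of $\mathfrak{C}$, it identifies $\pi_n(\conj{W}\iota(C)) \cong \cat{sSet}^*_{KQ}[S^{n-1}, U^*\iota(C)]$ and $\pi_n(\mathfrak{N}\iota(C)) \cong \cat{sSet}^*_{KQ}[(S^1)^{\wedge(n-1)}, U^*\iota(C)]$, and then checks that $Sz_{S^n}$ induces the collapse map $(S^1)^{\wedge(n-1)} \to S^{n-1}$ sending all but one top-dimensional simplex to the basepoint. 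Your Reedy argument bypasses this computation entirely. The paper's route extracts concrete geometric content about the Szczarba map on spheres; yours is shorter, works uniformly without the one-object reduction, and makes transparent that the only input needed about $Sz$ is that the mapping spaces of $\mathfrak{C}(\Delta^n)$ and $G(\Delta^n)$ are all contractible.

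Two remarks on the obstacles you flag. First, Reedy cofibrancy is easier than you suggest: since $\mathcal{L}\mathfrak{C}$ and $G^{Kan}$ are left adjoints, the $n$-th latching map of each cosimplicial object is precisely the image of $\partial\Delta^n \hookrightarrow \Delta^n$, and both functors are left Quillen from $\cat{sSet}_{KQ}$ to $\cat{Gpd}_{\cat{sSet},DK}$ by the paper's Appendix, so these latching maps are cofibrations with no need to unwind the coface formulas. Second, the homotopy-invariance principle you invoke follows directly from Ken Brown: for fibrant $C$, the functor $X^{\bullet} \mapsto \cat{sGpd}(X^{\bullet}, C)$ sends Reedy trivial cofibrations to trivial Kan fibrations (the relevant lifting problem transposes, via the realization--mapping adjunction, to lifting a relative latching map against $C \to *$), and hence sends all Reedy weak equivalences between Reedy cofibrant objects to weak homotopy equivalences. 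One small correction: $\mathfrak{C}(\Delta^n)$ is not literally free on a graded quiver in the paper's sense, though it is Bergner cofibrant (it is a simplicial computad); but once you have Reedy cofibrancy of the localized cosimplicial objects you do not actually need this level-wise cofibrancy claim.
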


\begin{proof}
Without loss of generality, we may assume $C$ has one object, i.e. $C$ is a a simplicial group. We will show that the natural transformation $\mathfrak{C} \xRightarrow{\text{Sz}} G$ induces an isomorphism of groups $\pi_n(\conj{W}\iota(C)) \to \pi_n(\mathfrak{N}\iota(C))$ for all $n \geq 1$.  Denote $S^n= \Delta^n / \partial\Delta^n \in \cat{sSet}^0$ and let $\pi: \Delta^n \to S^n$ denote the natural quotient map. 

We have natural isomorphisms of sets
\begin{equation}
\begin{aligned}
    \pi_n(\conj{W}\iota(C)) & \cong \cat{sSet}^0_{KQ}[S^n, \conj{W}^{Kan}(C)] \cong \cat{sGrp}_{KQ}[G^{Kan}(S^n), C] \\
    & \cong \cat{sGrp}_{KQ}[\mathcal{L}F^*(S^{n-1}), C] \cong \cat{sSet}^*_{KQ}[S^{n-1}, U^*\iota(C)],
    \end{aligned}
\end{equation}
which we now explain (for information on the notation see Appendix \ref{appendixmodel}). The first isomorphism follows from the definition of simplicial homotopy groups of Kan complexes. The second isomorphism follows from the Quillen equivalence $G^{Kan}: \cat{sSet}_{KQ}^0 \rightleftarrows \cat{sGrp}_{KQ} : \conj{W}^{Kan}$. The third isomorphism is obtained by noting that $G^{Kan}(S^n)$ is the free simplicial group generated by the simplicial set $S^{n-1}$. In fact, the isomorphism $G^{Kan}(S^n) \to \mathcal{L}F(S^{n-1})$ is determined by $\conj{\iota_n} \mapsto \iota_{n-1}$. The last isomorphism follows from Proposition \ref{locfree}. 

Similarly, we have natural isomorphisms of sets
\begin{equation}
    \begin{aligned}
    \pi_n(\mathfrak{N}\iota(C)) & \cong \cat{sSet}^0_{KQ}[S^n, \mathfrak{N} \iota(C)] \cong \cat{sSet}_{KQ}[S^n, \mathfrak{N}\iota(C)] \cong \cat{sSet}_J[S^n, \mathfrak{N} \iota(C)]   \\
    & \cong \cat{sGrp}_K[\mathcal{L} \mathfrak{C}(S^n), C] \cong \cat{sGrp}_K[\mathcal{L}F^* \left( (S^1)^{\wedge (n-1)} \right), C)] \cong \cat{sSet}^*_{KQ}[(S^1)^{\wedge (n-1)}, U^*\iota(C)],
    \end{aligned}
\end{equation}
which we now explain. The first isomorphism follows from the definition of simplicial homotopy groups of Kan complexes. The second isomorphism follows since $\cat{sSet}^0_{KQ} \hookrightarrow \cat{sSet}_{KQ}$ induces a fully faithful functor on homotopy categories \cite[Chapter V, Remark 6.5]{goerss2009simplicial}. The third isomorphism follows because $\mathfrak{N} \iota(C)$ is a Kan complex. The fourth isomorphism follows from the composition of  Quillen adjunctions $$\cat{sSet}_J \overset{\mathfrak{C}}{\underset{\mathfrak{N}}{\rightleftarrows}} \cat{Cat}_{\cat{sSet}, B} \overset{\mathcal{L}}{\underset{\iota}{\rightleftarrows}}  \cat{Gpd}_{\cat{sSet}, DK}.$$ The fifth isomorphism is given by the description of the simplicial sets of morphisms of $\mathfrak{C}(S^n)$ in terms of necklaces, namely,  $$\mathfrak{C}(S^n)(*,*)= \underset{(f: N \to S^n) \in ( \cat{Nec} \downarrow S^n)}{\text{colim}}(\Delta^1)^{\times \text{dim}(N)} \cong \underset{(p: \Delta^n \vee ... \vee \Delta^n \to S^n) \in (\cat{Nec} \downarrow S^n)}{\text{colim}}(\Delta^1)^{\times k(n-1)},$$ where the second colimit of simplicial sets is taken over the full subcategory of $\cat{Nec} \downarrow S^n$ on the set of objects  $$\{ (p: N \to S^n)| N= \Delta^{n_1} \vee ... \vee \Delta^{n_k}, \text{for some $k\geq 1$}, n_i=n \text{ for all $i$}, \text{and } p=\pi \vee ... \vee \pi\}.$$ From this description of $\mathfrak{C}(S^n)$, it follows that  $\mathfrak{C}(S^n) \cong F \left( (S^1)^{\wedge (n-1)} \right),$ so $\mathfrak{C}(S^n)$ is freely generated by the $(n-1)!$ non-degenerate $(n-1)$-dimensional simplices of $(S^1)^{\wedge (n-1)}$. 
The last isomorphism follows from Proposition \ref{locfree}. 

Under the identifications above, the map $\pi_n(\conj{W}\iota(C)) \to \pi_n(\mathfrak{N}\iota(C))$ becomes a map 
$$\cat{sSet}^*_{KQ}[S^{n-1}, U^*\iota(C)] \to \cat{sSet}^*_{KQ}[(S^1)^{\wedge (n-1)}, U^*\iota(C)].$$ We claim this is an isomorphism. In fact, this follows by considering the diagram
\begin{equation}
    \begin{tikzcd}
	{\mathfrak{C}(\Delta^ n)} && {G(\Delta^n)} \\
	{ \mathfrak{C}(S^n)} && {G(S^n)}
	\arrow["{  \text{Sz}_{\Delta^n}}", from=1-1, to=1-3]
	\arrow["{ \text{Sz}_{S^n}}"', from=2-1, to=2-3]
	\arrow["{\mathfrak{C}(\pi)}", from=1-1, to=2-1]
	\arrow["{G(\pi)}",from=1-3, to=2-3]
\end{tikzcd}
\end{equation}
and noting that $F^*( (S^1)^{\wedge (n-1} ) \cong \mathfrak{C}(S^n) \xrightarrow{\text{Sz}_{S^n}} G(S^n) \cong F^*(S^{n-1}) $ is induced by the map $(S^1)^{\wedge (n-1)} \to S^{n-1}$ which collapses all nondegenerate $(n-1)$-simplices except for the one labelled $\gamma(\sigma) = (0, \dots, 0)$, that is sent to $\iota_{n-1}$ in $S^{n-1}$. This map is a weak equivalence and consequently it induces an isomorphism $\cat{sSet}^*_{KQ}[S^{n-1}, U^*\iota (C)] \to \cat{sSet}^*_{KQ}[(S^1)^{\wedge (n-1)}, U^*\iota(C)]$, as desired. 
\end{proof}
\subsection{Comparing the localized rigidification and Kan loop groupoid functors}
We can now deduce the first part of Theorem \ref{theorem1}. 

\begin{Cor}\label{LandG}For any simplicial set $X$ the map $\mathcal{L} \text{Sz}_X: \mathcal{L} \mathfrak{C}(X) \to G^{Kan}(X)$ is a weak equivalence of simplicial groupoids. 
\end{Cor}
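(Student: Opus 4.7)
The plan is to deduce the corollary from Proposition \ref{WandN} by detecting Dwyer-Kan weak equivalences of simplicial groupoids through the classifying space construction. The first reduction uses that every simplicial groupoid is DK-fibrant and that $G^{Kan}: \cat{sSet}_{KQ} \rightleftarrows \cat{sGpd}_{DK}: \conj{W}^{Kan}$ is a Quillen equivalence whose right adjoint preserves all weak equivalences; consequently $\conj{W}^{Kan}$ reflects weak equivalences between DK-fibrant simplicial groupoids. It therefore suffices to show that $\conj{W}^{Kan}(\mathcal{L}Sz_X)$ is a Kan-Quillen weak equivalence of simplicial sets. Using the natural isomorphism $\conj{W}^{Kan}\cong \conj{W}\circ\iota$ on simplicial groupoids extracted from the proof of Proposition \ref{prop G Kan iso to L G}, this is the map $\conj{W}\iota(\mathcal{L}Sz_X): \conj{W}\iota\mathcal{L}\mathfrak{C}(X) \to \conj{W}\iota G^{Kan}(X)$.

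By the naturality of Proposition \ref{WandN} in $C$, applied to $C = \mathcal{L}\mathfrak{C}(X)$ and to $C = \mathcal{L}G(X) \cong G^{Kan}(X)$, the map $\conj{W}\iota(\mathcal{L}Sz_X)$ fits into a commutative square
\begin{equation*}
\begin{tikzcd}
\conj{W}\iota\mathcal{L}\mathfrak{C}(X) \arrow[r, "\simeq"] \arrow[d, "\conj{W}\iota\mathcal{L}Sz_X"'] & \mathfrak{N}\iota\mathcal{L}\mathfrak{C}(X) \arrow[d, "\mathfrak{N}\iota\mathcal{L}Sz_X"] \\
\conj{W}\iota G^{Kan}(X) \arrow[r, "\simeq"] & \mathfrak{N}\iota G^{Kan}(X)
\end{tikzcd}
\end{equation*}
whose horizontal arrows are weak equivalences. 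By two-out-of-three, it is equivalent to show that the right-hand vertical map $\mathfrak{N}\iota\mathcal{L}Sz_X$ is a Kan-Quillen weak equivalence.

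To treat this right-hand vertical, I would identify each of its source and target with $X$ up to natural weak equivalence. For the target, compose the unit $X \to \conj{W}^{Kan}G^{Kan}(X) \cong \conj{W}\iota G^{Kan}(X)$ of the Quillen equivalence $G^{Kan} \dashv \conj{W}^{Kan}$ (a weak equivalence since $X$ is cofibrant) with the Proposition \ref{WandN} comparison. For the source, I would invoke the composite adjunction $\mathcal{L}\mathfrak{C} \dashv \conj{W}\iota$ between $\cat{sSet}$ and $\cat{sGpd}$, which the appendix establishes as a Quillen equivalence between the Kan-Quillen and Dwyer-Kan model structures; the corresponding unit $X \to \conj{W}\iota\mathcal{L}\mathfrak{C}(X)$ is then a KQ weak equivalence, and composing with Proposition \ref{WandN} produces a natural weak equivalence $X \simeq \mathfrak{N}\iota\mathcal{L}\mathfrak{C}(X)$. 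The naturality of both identifications in $X$ forces a commutative triangle with identity on $X$, so $\mathfrak{N}\iota\mathcal{L}Sz_X$ is a weak equivalence by two-out-of-three, completing the argument.

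The principal obstacle will be verifying that $\mathcal{L}\mathfrak{C} \dashv \conj{W}\iota$ is indeed a Quillen equivalence between $\cat{sSet}_{KQ}$ and $\cat{sGpd}_{DK}$, which requires a transfer-of-structure argument across localizations of the Joyal and Bergner model structures. This is the subject matter of the paper's appendix on Quillen equivalences, where the necessary comparisons of localized model structures are developed and can be invoked to close the loop in Step 3.
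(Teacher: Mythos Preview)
Your overall strategy is the same as the paper's, just unwound more explicitly. The paper argues in one line: $\mathcal{L}\mathfrak{C}\dashv\mathfrak{N}\iota$ and $G^{Kan}\dashv\conj{W}^{Kan}$ are both Quillen equivalences between $\cat{sSet}_{KQ}$ and $\cat{Gpd}_{\cat{sSet},DK}$ (Corollary~\ref{Qequi} and Theorem~\ref{kan Qequivalence}), and Proposition~\ref{WandN} says the induced natural transformation on right adjoints is a weak equivalence; hence so is the natural transformation on left adjoints. Your Steps~1--3 essentially reprove this general fact about mates by hand.

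There is, however, a genuine slip in your Step~3: $\conj{W}\iota$ is \emph{not} the right adjoint of $\mathcal{L}\mathfrak{C}$. Since $\mathfrak{C}\dashv\mathfrak{N}$ and $\mathcal{L}\dashv\iota$, the composite adjunction is $\mathcal{L}\mathfrak{C}\dashv\mathfrak{N}\iota$, and this is exactly what Corollary~\ref{Qequi} in the appendix establishes as a Quillen equivalence. So there is no unit map $X\to\conj{W}\iota\mathcal{L}\mathfrak{C}(X)$ coming from an adjunction; the correct unit lands directly in $\mathfrak{N}\iota\mathcal{L}\mathfrak{C}(X)$, and you do not need to compose with Proposition~\ref{WandN} on the source side at all. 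Once you make this correction, also note that the commutativity of your final triangle is not automatic from ``naturality in $X$'' alone: you need that $\mathfrak{N}\iota\mathcal{L}Sz_X$ intertwines the two units, which holds precisely because the natural transformation $\conj{W}\iota\Rightarrow\mathfrak{N}\iota$ of Proposition~\ref{WandN} is the mate of $\mathcal{L}Sz$. With these fixes your argument is correct and coincides with the paper's.
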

\begin{proof}

By Corollary \ref{Qequi} $$\mathcal{L}\mathfrak{C}: \mathsf{sSet}_{KQ} \rightleftarrows \mathsf{Gpd}_{\mathsf{sSet}, DK}: \mathfrak{N} \iota$$ defines a Quillen equivalence. By Proposition \ref{WandN}, the natural transformation $\mathcal{L}Sz: \mathcal{L}\mathfrak{C} \xRightarrow{} G^{Kan}$ induces a weak equivalence between their right Quillen adjoints. Hence, $\mathcal{L}Sz_X: \mathcal{L}\mathfrak{C}(X) \xRightarrow{ } G^{Kan}(X)$ is a natural weak equivalence of simplicial groupoids for all $X$. 
\end{proof}

Fix a fibrant replacement functor $\mathcal{J}: \cat{sSet} \to \cat{sSet}$ in the Joyal model structure. For any simplicial set $X$ denote by $X^1$ its $1$-skeleton. Fix a simplicial set $J^1$ such that there is a cofibration $\Delta^1 \to  J^1$ which is a weak homotopy equivalence and such that the homotopy category of $J^1$ is a groupoid.  Then for any simplicial set $X$ there is a natural cofibration 
$$X^1 \to \underset{\Delta^1 \to X^1}{ \text{colim}} J^1.$$
The above cofibration is a weak homotopy equivalence which we may think of as a thickening of $X^1$ in which every $1$-simplex has an inverse up to homotopy.

Denote the pushout of the cofibrations $X^1 \to X$ and $X^1 \to \underset{\Delta^1 \to X^1}{ \text{colim}} J^1$ by $$\mathcal{K}_1(X)= \left( \underset{\Delta^1 \to X^1}{ \text{colim}} J^1\right) \coprod_{X^1} X.$$ Once we fix $J^1$ and a trivial cofibration (in the Kan-Quillen model structure) $\Delta^1 \to J^1$, this construction gives rise to a functor $$\mathcal{K}_1: \cat{sSet} \to \cat{sSet}.$$

 A fundamental principle of the theory of quasi-categories is that fibrant replacement of a simplicial set $X$ in the Kan-Quillen model structure can be thought of as first "inverting" the $1$-skeleton of $X$ up to homotopy and then fibrant replacing the resulting simplicial set in the Joyal model structure.

\begin{Prop} \label{Kanrep} The composition of natural maps of simplicial sets $$X \to \mathcal{K}_1(X) \to \mathcal{J} (\mathcal{K}_1(X))$$ is a cofibration and a weak homotopy equivalence. Moreover, $\mathcal{J} (\mathcal{K}_1(X))$ is fibrant in the Kan-Quillen model structure, i.e. $\mathcal{J} (\mathcal{K}_1(X))$ is a Kan complex.
\end{Prop}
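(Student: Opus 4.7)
The plan is to prove the three assertions of the proposition in sequence, then combine them: (i) $X \to \mathcal{K}_1(X)$ is a trivial cofibration in the Kan--Quillen model structure; (ii) $\mathcal{K}_1(X) \to \mathcal{J}(\mathcal{K}_1(X))$ is also one; and (iii) $\mathcal{J}(\mathcal{K}_1(X))$ is a Kan complex. Assertions (i) and (ii) together yield that the composition is a cofibration and a weak homotopy equivalence.

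For (i), I would first verify that $\Delta^1 \hookrightarrow J^1$ (included as the face $[01]$) is both a monomorphism and a weak homotopy equivalence, since $|\Delta^1|$ and $|J^1|$ are each contractible (the latter being a 2-disk with a boundary edge collapsed to a point). Because Kan--Quillen trivial cofibrations are closed under coproducts and pushouts, the induced map $X^1 \to \underset{\Delta^1 \to X^1}{\mathrm{colim}}\, J^1$ is a trivial cofibration, and hence so is its pushout $X \to \mathcal{K}_1(X)$ along $X^1 \hookrightarrow X$. For (ii), the map $\mathcal{K}_1(X) \to \mathcal{J}(\mathcal{K}_1(X))$ is a trivial cofibration in $\cat{sSet}_J$ by the choice of $\mathcal{J}$; since cofibrations in $\cat{sSet}_J$ and $\cat{sSet}_{KQ}$ both coincide with monomorphisms and every Joyal weak equivalence is a weak homotopy equivalence, this map is also a trivial cofibration in $\cat{sSet}_{KQ}$.

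For (iii), $\mathcal{J}(\mathcal{K}_1(X))$ is a quasi-category by construction, and Joyal's theorem characterizes Kan complexes among quasi-categories as those whose fundamental category is a groupoid. Since Joyal weak equivalences induce isomorphisms of fundamental categories, it suffices to show that $\tau_1(\mathcal{K}_1(X))$ is a groupoid. The $\mathcal{K}_1$ construction is designed precisely for this: the non-degenerate 2-simplex of each attached $J^1$ imposes the relation $[12]\circ[01] = \mathrm{id}$ in $\tau_1$, supplying a homotopy inverse to every generator coming from a 1-simplex of $X^1$.

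The main obstacle lies in this last step. One must carefully verify that the attached $J^1$'s, taken together with the 2-simplices already present in $X$, yield genuine two-sided invertibility in $\tau_1(\mathcal{K}_1(X))$ rather than a merely split monomorphism/epimorphism structure. The cleanest route I envision is to construct natural functors in both directions between $\tau_1(\mathcal{K}_1(X))$ and the fundamental groupoid of $|X|$, using the universal property of the localization at the set of all $1$-simplices to produce a functor from the fundamental groupoid and identifying its inverse as the canonical quotient; this identification would make $\tau_1(\mathcal{K}_1(X))$ a groupoid and complete the proof.
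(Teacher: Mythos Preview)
Your treatment of (i) and (ii) is correct and essentially matches the paper's (which phrases (i) via homotopy pushouts, equivalently). Your worry about (iii), however, is not just an obstacle but a genuine gap---present also in the paper's one-sentence argument. Take $X=\Delta^1$; then $\mathcal{K}_1(X)$ is, up to trivially attached cells coming from degenerate edges, just $J^1$ itself. The fundamental category $\tau_1(J^1)$ has objects $0,1$, generators $e=[01]\colon 0\to 1$ and $e'=[12]\colon 1\to 0$, and the single relation $e'e=\mathrm{id}_0$ from the unique non-degenerate $2$-simplex. The composite $ee'$ is then a non-identity idempotent on $1$ (map to any category containing a non-trivial split idempotent to see $ee'\neq\mathrm{id}_1$), so $\tau_1(J^1)$ is \emph{not} a groupoid, $\mathcal{J}(J^1)$ is not a Kan complex, and the proposition as stated fails. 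Your proposed route through the fundamental groupoid of $|X|$ cannot succeed either: since $e$ is not invertible in $\tau_1(\mathcal{K}_1(\Delta^1))$, no functor from that groupoid extending $e\mapsto e$ exists.

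The defect is in the construction $\mathcal{K}_1$, not in the strategy. Replacing $J^1$ by the nerve $N(\mathbb{J})$ of the walking isomorphism (still contractible, still receiving a monomorphism from $\Delta^1$) repairs everything: the arguments for (i) and (ii) go through unchanged, and $\tau_1$ of the modified $\mathcal{K}_1(X)$ is now visibly the localization of $\tau_1(X)$ at all $1$-simplices, hence a groupoid. Alternatively, keep $J^1$ but attach \emph{two} copies per edge, one along $[01]$ and one along $[12]$, so that each original edge acquires both a left and a right inverse.
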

\begin{proof}

Since $X = X^1 \amalg_{X^1} X$ is a pushout of cofibrations between cofibrant objects, this presents a homotopy pushout in the Kan-Quillen model structure. Hence the weak homotopy equivalence $X^1 \to \underset{\Delta^1 \to X^1}{ \text{colim}} J^1$ induces a weak homotopy equivalence $X \to \mathcal{K}_1(X)$. Since every Joyal equivalence is a weak homotopy equivalence it follows that $\mathcal{K}_1(X) \to \mathcal{J} (\mathcal{K}_1(X))$ is a weak homotopy equivalence. 
By Theorem \ref{Kaniff}, the quasi-category $\mathcal{J} (\mathcal{K}_1(X))$ is a Kan complex since the homotopy category, which is isomorphic to the homotopy category of $\mathcal{K}_1(X)$, is a groupoid.
\end{proof}

Denote $\mathcal{K}(X)=\mathcal{J}(\mathcal{K}_1(X))$ so that $X \mapsto \mathcal{K}(X)$ is a functorial Kan replacement. 

\begin{Cor}\label{K1andK} The simplicial categories $\mathfrak{C}(\mathcal{K}_1(X))$ and $\mathfrak{C}(\mathcal{K}(X))$ are naturally Bergner weak equivalent.
\end{Cor}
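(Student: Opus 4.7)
The plan is to invoke the Quillen equivalence $\mathfrak{C} : \cat{sSet}_J \rightleftarrows \cat{Cat}_{\cat{sSet}, B} : \mathfrak{N}$ (Definition \ref{homotopy coherent nerve}, with the model structure statement recorded as Theorem \ref{rigidification} in the appendix) and deduce the claim from Ken Brown's lemma. The natural map $\mathcal{K}_1(X) \to \mathcal{J}(\mathcal{K}_1(X)) = \mathcal{K}(X)$ is, by the very definition of $\mathcal{J}$, a Joyal fibrant replacement; in particular it is a Joyal weak equivalence (indeed a trivial cofibration in $\cat{sSet}_J$). Moreover, since the cofibrations in the Joyal model structure are the monomorphisms, every simplicial set is cofibrant, so both $\mathcal{K}_1(X)$ and $\mathcal{K}(X)$ are cofibrant objects of $\cat{sSet}_J$.

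Now $\mathfrak{C}$ is the left adjoint of a Quillen equivalence, hence in particular a left Quillen functor between $\cat{sSet}_J$ and $\cat{Cat}_{\cat{sSet},B}$. By Ken Brown's lemma, a left Quillen functor sends weak equivalences between cofibrant objects to weak equivalences. Applying this to the Joyal equivalence $\mathcal{K}_1(X) \to \mathcal{K}(X)$ yields the desired Bergner weak equivalence
\[
\mathfrak{C}(\mathcal{K}_1(X)) \xrightarrow{\simeq} \mathfrak{C}(\mathcal{K}(X)).
\]
Naturality in $X$ is automatic, since each constituent of the construction ($X \mapsto X^1$, the pushout defining $\mathcal{K}_1$, the functorial Joyal fibrant replacement $\mathcal{J}$, and $\mathfrak{C}$) is functorial.

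Since the argument reduces entirely to recognizing $\mathcal{K}_1(X) \to \mathcal{K}(X)$ as a weak equivalence in a model structure in which $\mathfrak{C}$ is left Quillen and every object is cofibrant, no genuine obstacle arises. The only conceptual point worth emphasizing in the write-up is that, while the map $\mathcal{K}_1(X) \to \mathcal{K}(X)$ is not in general a weak equivalence in the Kan-Quillen model structure on the nose (both sides are already weakly equivalent to $X$, but the relevant fact for us is the Joyal equivalence), it is precisely the Joyal model structure — the one for which $\mathfrak{C}$ is left Quillen — that controls the rigidification functor, and this is what makes the implication immediate.
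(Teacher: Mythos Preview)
Your proof is correct and essentially identical to the paper's: the paper simply notes that $\mathcal{K}_1(X) \to \mathcal{J}(\mathcal{K}_1(X)) = \mathcal{K}(X)$ is a Joyal equivalence and that $\mathfrak{C}$ sends Joyal equivalences to Bergner weak equivalences, which is exactly your Ken Brown argument spelled out. One small correction to your closing remark: the map $\mathcal{K}_1(X) \to \mathcal{K}(X)$ \emph{is} a Kan--Quillen weak equivalence (every Joyal equivalence is, cf.\ Proposition~\ref{Bousfieldloc}); the point is rather that the Kan--Quillen structure is irrelevant here because $\mathfrak{C}$ is left Quillen for the Joyal structure, not the Kan--Quillen one.
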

\begin{proof}

Since  $\mathcal{K}_1(X) \xrightarrow{\simeq} \mathcal{J}(\mathcal{K}_1(X))=\mathcal{K}(X)$ is a Joyal equivalence and $\mathfrak{C}$ sends Joyal equivalences to Bergner weak equivalences of simplicial categories the result follows.
\end{proof}

For simplicity from now on we will denote $$\mathbb{C}= \iota \circ \mathcal{L} \circ \mathfrak{C}: \cat{sSet} \to \cat{Cat}_{\cat{sSet}}$$
and
$$\mathbb{G}= \iota \circ \mathcal{L} \circ G: \cat{sSet} \to \cat{Cat}_{\cat{sSet}}.$$

 We now argue that $\mathfrak{C} (\mathcal{K}(X))$ can be modeled, up to weak equivalence of simplicial categories, by localizing the image of $\mathfrak{C}(X^1)$ in $\mathfrak{C}(X)$. Define $\widehat{\mathfrak{C}}: \cat{sSet} \to \cat{Cat}_{\cat{sSet}}$ to be the functor given by $$\widehat{\mathfrak{C}}(X)= \mathfrak{C}(X)[\mathfrak{C}(X^1)^{-1}].$$
\begin{Prop}\label{L1andL} For any $X \in \cat{sSet}$, the natural inclusion of simplicial categories $\widehat{\mathfrak{C}}(X) \to \mathbb{C}(X)$ is a weak equivalence. 
\end{Prop}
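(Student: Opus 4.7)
The plan proceeds in three steps. First, I would observe that by the universal property of localization, there is a canonical isomorphism $\iota\mathcal{L}(\widehat{\mathfrak{C}}(X)) \cong \mathbb{C}(X)$: localizing $\widehat{\mathfrak{C}}(X)$ at all of its morphisms is the same as localizing $\mathfrak{C}(X)$ at all morphisms, since $\mathfrak{C}(X^1)$ has already been inverted. Under this identification, the map $\widehat{\mathfrak{C}}(X) \to \mathbb{C}(X)$ in question agrees with the unit of the adjunction $\mathcal{L} \dashv \iota$. It therefore suffices to show that this unit is a Bergner weak equivalence.

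The main technical step is to show that every $0$-morphism of $\widehat{\mathfrak{C}}(X)$ is a homotopy equivalence, or equivalently that $\pi_0(\widehat{\mathfrak{C}}(X))$ is a groupoid. For this I would use the necklace description of mapping spaces (Proposition \ref{dsnecklaces}): any $0$-morphism in $\mathfrak{C}(X)(x,y)$ is represented by a pair consisting of a necklace $f:N\to X$ with $f(\alpha_N)=x,f(\omega_N)=y$ and a vertex of the cube $(\Delta^1)^{\times\dim N}$. Since the cube is connected, any such $0$-morphism is connected, via a zigzag of $1$-simplices in the mapping space, to the ``maximally decomposed'' vertex, which is precisely the composition of the $1$-simplex morphisms forming the spine of the necklace, all of which lie in $\mathfrak{C}(X^1)$. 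After passing to $\widehat{\mathfrak{C}}(X)$, these spine generators are inverted, so their composition is an isomorphism and in particular a homotopy equivalence. Because being a homotopy equivalence depends only on the connected component of the mapping space, the original $0$-morphism is also a homotopy equivalence in $\widehat{\mathfrak{C}}(X)$.

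Finally, I would invoke the general principle that if $\mathcal{C}$ is a cofibrant simplicial category whose homotopy category $\pi_0(\mathcal{C})$ is a groupoid, then the unit $\mathcal{C} \to \iota\mathcal{L}(\mathcal{C})$ is a Bergner weak equivalence. The cofibrancy here is inherited by $\widehat{\mathfrak{C}}(X)$ from $\mathfrak{C}(X)$, since $\mathfrak{C}(X^1)\hookrightarrow \mathfrak{C}(X)$ is a Bergner cofibration and $\widehat{\mathfrak{C}}(X)$ is built as a pushout along it. The principle can be justified by observing that under these hypotheses $\mathfrak{N}(\mathcal{C})$ is a Kan complex, so combining the Joyal/Bergner Quillen equivalence $(\mathfrak{C},\mathfrak{N})$ with the Kan--Quillen/Dwyer--Kan Quillen equivalence $(\mathcal{L}\mathfrak{C},\mathfrak{N}\iota)$ from Corollary \ref{Qequi} forces the unit to be a weak equivalence.

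The main obstacle is this last step: the general model-categorical principle, while geometrically transparent, needs careful justification. One could either set up a left Bousfield localization of Bergner whose fibrant objects are simplicial categories with groupoid homotopy category (in which both $\widehat{\mathfrak{C}}(X)$ and $\mathbb{C}(X)$ are fibrant, and the map is a local weak equivalence between fibrant objects), or argue directly that in $\iota\mathcal{L}(\widehat{\mathfrak{C}}(X))$ each formal inverse is already represented up to homotopy by an actual homotopy inverse in $\widehat{\mathfrak{C}}(X)$, so no new morphisms are added up to weak equivalence.
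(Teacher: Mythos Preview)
Your proposal is correct and follows essentially the same route as the paper. The paper's proof consists of a single sentence citing Propositions~9.5 and~9.6 of Dwyer--Kan's \emph{Simplicial localizations of categories}; these are precisely the two ingredients you isolate, namely that the partial localization $\widehat{\mathfrak{C}}(X)=\mathfrak{C}(X)[\mathfrak{C}(X^1)^{-1}]$ remains a cofibrant (free) simplicial category, and that for a cofibrant simplicial category whose homotopy category is already a groupoid the unit $\mathcal{C}\to\iota\mathcal{L}(\mathcal{C})$ is a Bergner weak equivalence. Your necklace argument for why $\pi_0\widehat{\mathfrak{C}}(X)$ is a groupoid and your pushout argument for cofibrancy are exactly the content hidden behind the paper's citation, so there is no genuine difference in strategy---you have simply unpacked what the paper leaves as a black-box reference.
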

\begin{proof} This follows from Propositions 9.5 and 9.6 of \cite{dwyer1980simplicial}. 
\end{proof} 

\begin{Cor}\label{corthm1} The natural transformation $Sz: \mathfrak{C} \xRightarrow{ } G$ induces a weak equivalence of simplicial categories $\iota(\mathcal{L} (Sz_{X})): \widehat{\mathfrak{C}}(X) \to \mathbb{G}(X)$. 
\end{Cor}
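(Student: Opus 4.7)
The plan is to observe that this corollary follows formally by combining the two preceding results, Corollary \ref{LandG} and Proposition \ref{L1andL}.

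First, I would factor the map $\iota(\mathcal{L}(Sz_X)): \widehat{\mathfrak{C}}(X) \to \mathbb{G}(X)$ appearing in the statement as the composition
\begin{equation*}
\widehat{\mathfrak{C}}(X) \xrightarrow{\mu_X} \mathbb{C}(X) \xrightarrow{\iota(\mathcal{L}(Sz_X))} \mathbb{G}(X),
\end{equation*}
where $\mu_X$ is the natural comparison map between the two localizations of $\mathfrak{C}(X)$, and the second arrow is the image under $\iota$ of the map of simplicial groupoids $\mathcal{L}(Sz_X): \mathcal{L}\mathfrak{C}(X) \to \mathcal{L}G(X) \cong G^{Kan}(X)$, the last isomorphism being provided by Proposition \ref{prop G Kan iso to L G}. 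Since $\mathbb{G}(X) = \iota \circ \mathcal{L} \circ G(X)$ by definition, this factorization is compatible with the notation in the statement.

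Second, Proposition \ref{L1andL} immediately gives that $\mu_X$ is a weak equivalence of simplicial categories, while Corollary \ref{LandG} gives that $\mathcal{L}(Sz_X)$ is a weak equivalence in $\cat{Gpd}_{\cat{sSet}, DK}$. Since weak equivalences in both the Dwyer--Kan model structure on simplicial groupoids and the Bergner model structure on simplicial categories are characterized by essential surjectivity after applying the homotopy category functor together with weak homotopy equivalences on all mapping simplicial sets, the fully faithful inclusion $\iota$ preserves weak equivalences. Consequently $\iota(\mathcal{L}(Sz_X))$ is a weak equivalence of simplicial categories.

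Finally, two-out-of-three (or simply closure of weak equivalences under composition) for the Bergner model structure yields that the composite $\iota(\mathcal{L}(Sz_X)) \circ \mu_X$ is a weak equivalence of simplicial categories, which is the statement of the corollary. The substantive technical content was already absorbed into Corollary \ref{LandG} via the Quillen-equivalence argument of Proposition \ref{WandN}, so no further obstacle appears at this step; the main thing to be careful about is only the bookkeeping of the identifications $\mathbb{C}(X) = \iota\mathcal{L}\mathfrak{C}(X)$ and $\mathbb{G}(X) \cong \iota G^{Kan}(X)$ so that the three objects $\widehat{\mathfrak{C}}(X)$, $\mathbb{C}(X)$, $\mathbb{G}(X)$ sit in the expected sequence.
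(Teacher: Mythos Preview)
Your proof is correct and follows exactly the same approach as the paper, which simply records that the corollary follows from Corollary~\ref{LandG} and Proposition~\ref{L1andL}. You have merely made explicit the factorization through $\mu_X$ and the routine fact that $\iota$ preserves weak equivalences, which the paper leaves implicit.
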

\begin{proof} This follows from Corollary \ref{LandG} and Proposition \ref{L1andL}.
\end{proof}

\begin{Prop} \label{inverting1} The simplicial categories $\widehat{\mathfrak{C}}(X)$ and $\mathfrak{C} (\mathcal{K}_1(X))$  are naturally weak equivalent.
\end{Prop}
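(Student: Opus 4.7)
The plan is to realize both $\widehat{\mathfrak{C}}(X)$ and $\mathfrak{C}(\mathcal{K}_1(X))$ as homotopy pushouts sharing the common corner $\mathfrak{C}(X^1) \hookrightarrow \mathfrak{C}(X)$ and to exhibit a natural Bergner weak equivalence between the remaining corners. Since $\mathfrak{C}$ is a left adjoint, applying it to the pushout defining $\mathcal{K}_1(X)$ yields
\begin{equation*}
\mathfrak{C}(\mathcal{K}_1(X)) \;\cong\; \mathfrak{C}(X) \coprod_{\mathfrak{C}(X^1)} \mathfrak{C}(Y_X), \qquad \text{where } Y_X := \underset{\Delta^1 \to X^1}{\text{colim}} \, J^1,
\end{equation*}
while $\widehat{\mathfrak{C}}(X) = \mathfrak{C}(X) \coprod_{\mathfrak{C}(X^1)} \iota\mathcal{L}(\mathfrak{C}(X^1))$ by Definition \ref{localizationofcategories}. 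The common left leg is a Bergner cofibration, being the image of the monomorphism $X^1 \hookrightarrow X$ under the left Quillen functor $\mathfrak{C}: \cat{sSet}_J \to \cat{Cat}_{\cat{sSet},B}$, so both pushouts are homotopy pushouts in the Bergner model structure.

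For the right corners, I propose the natural zigzag of Bergner weak equivalences under $\mathfrak{C}(X^1)$:
\begin{equation*}
\mathfrak{C}(Y_X) \;\xrightarrow{\text{unit}}\; \iota\mathcal{L}(\mathfrak{C}(Y_X)) \;\xleftarrow{\iota\mathcal{L}\mathfrak{C}(X^1 \hookrightarrow Y_X)}\; \iota\mathcal{L}(\mathfrak{C}(X^1)).
\end{equation*}
The right arrow is a Bergner weak equivalence because $X^1 \hookrightarrow Y_X$ is a Kan-Quillen weak equivalence (as noted in the paragraph preceding Proposition \ref{Kanrep}) and the composite $\iota\mathcal{L}\mathfrak{C}: \cat{sSet}_{KQ} \to \cat{Cat}_{\cat{sSet},B}$ takes Kan-Quillen weak equivalences to Bergner weak equivalences by Corollary \ref{Qequi}. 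For the left arrow, by naturality applied to the Joyal weak equivalence $Y_X \to \mathcal{J}(Y_X) = \mathcal{K}(X^1)$ (Proposition \ref{Kanrep}), it suffices to show the unit $\mathfrak{C}(K) \to \iota\mathcal{L}(\mathfrak{C}(K))$ is a Bergner weak equivalence for $K$ a Kan complex, since the vertical maps in the naturality square are Bergner equivalences (one by $\mathfrak{C}$ applied to a Joyal equivalence, the other by $\iota\mathcal{L}\mathfrak{C}$ applied to the same equivalence, which is also a Kan-Quillen equivalence).

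The main obstacle is exactly this last claim. For a Kan complex $K$, every $1$-morphism of $\mathfrak{C}(K)$ is already invertible up to homotopy since the homotopy category of $\mathfrak{C}(K)$ agrees with the homotopy category of the quasi-category $K$, which is a groupoid by Joyal's criterion. Formally inverting already homotopy-invertible morphisms is a Bergner weak equivalence by the Dwyer-Kan principle of \cite[Propositions 9.5--9.6]{dwyer1980simplicial} that underlies the proof of Proposition \ref{L1andL}. With this principle in hand, the zigzag gives a natural Bergner weak equivalence between the right corners, and the homotopy-pushout identification of the left corners yields the desired natural Bergner weak equivalence $\mathfrak{C}(\mathcal{K}_1(X)) \simeq \widehat{\mathfrak{C}}(X)$.
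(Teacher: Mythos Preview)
Your proposal is correct and follows essentially the same approach as the paper's proof: both express the two simplicial categories as pushouts along the cofibration $\mathfrak{C}(X^1)\hookrightarrow\mathfrak{C}(X)$ and compare the remaining corners via the zigzag $\mathfrak{C}(Y_X)\to\iota\mathcal{L}\mathfrak{C}(Y_X)\leftarrow\iota\mathcal{L}\mathfrak{C}(X^1)$, invoking \cite[Proposition~9.5]{dwyer1980simplicial} for the left arrow and Corollary~\ref{Qequi} for the right. The only difference is that the paper applies Dwyer--Kan's result directly to $\mathfrak{C}(Y_X)=\mathfrak{C}(\mathcal{K}_1(X^1))$, whose homotopy category is already a groupoid by construction of $J^1$, whereas you take an unnecessary detour through the Kan replacement $\mathcal{J}(Y_X)$ before invoking the same result.
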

\begin{proof} Since $\mathfrak{C}$ is a left adjoint, it preserves pushouts so we have a natural isomorphism
$$\mathfrak{C}(\mathcal{K}_1(X)) \cong \mathfrak{C}(\mathcal{K}_1(X^1)) \coprod_{\mathfrak{C}(X^1)} \mathfrak{C}(X).$$
We claim that the natural maps of simplicial categories $$\mathfrak{C}(\mathcal{K}_1(X^1)) \xrightarrow{ } \mathbb{C}(\mathcal{K}_1(X^1)) \xleftarrow{ } \mathbb{C} (X^1)$$
are all weak equivalences. Since $\mathfrak{C}(\mathcal{K}_1(X^1)$ is a cofibrant simplicial category with the property that the homotopy category is a groupoid, Proposition 9.5 of \cite{dwyer1980simplicial} implies that the first map is a weak equivalence of simplicial categories. 

By Corollary \ref{Qequi}, $\mathcal{L} \circ \mathfrak{C}: \cat{sSet}_{KQ} \to \cat{Gpd}_{\cat{sSet}, DK}$ preserves weak equivalences between cofibrant objects so $\mathcal{L}\mathfrak{C}(X^1) \to \mathcal{L} \mathfrak{C}(\mathcal{K}_1(X^1))$ is a weak equivalence. Since $\iota: \cat{Gpd}_{\cat{sSet}, DK} \to \cat{Cat}_{\cat{sSet},B}$ preserves weak equivalences, it follows that $\mathbb{C}(X^1) \to \mathbb{C}(\mathcal{K}_1(X^1))$ is a weak equivalence of simplicial categories. Hence, we have natural induced weak equivalences of simplicial categories

$$ \mathfrak{C}(\mathcal{K}_1(X)) \cong \mathfrak{C}(\mathcal{K}_1(X^1)) \coprod_{\mathfrak{C}(X^1)} \mathfrak{C}(X) \xrightarrow{\simeq} \mathbb{C}(\mathcal{K}_1(X^1)) \coprod_{\mathfrak{C}(X^1)} \mathfrak{C}(X) \xleftarrow{\simeq} \mathbb{C}(X^1) \coprod_{\mathfrak{C}(X^1)} \mathfrak{C}(X) \cong \widehat{\mathfrak{C}}(X).$$

\end{proof}


\begin{Cor} Let $f: X \to Y$ be a map of simplicial sets. The following are equivalent:
\begin{enumerate}
    \item $f: X \to Y$ is a weak homotopy equivalence
    \item $\widehat{\mathfrak{C}}(f): \widehat{\mathfrak{C}}(X) \to \widehat{\mathfrak{C}}(Y)$ is a weak equivalence of simplicial categories
    \item $\mathcal{L}\mathfrak{C}(f): \mathcal{L}\mathfrak{C}(X) \to\mathcal{L} \mathfrak{C}(Y)$ is a weak equivalence of simplicial groupoids.
\end{enumerate}
\end{Cor}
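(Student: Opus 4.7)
The plan is to reduce all three conditions to a single statement about the Kan loop groupoid functor, using the natural weak equivalences already established in this section. Everything in sight is either a natural weak equivalence between the functors in question, or a part of a Quillen equivalence, so the proof should be a short diagram chase.

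First I would establish (2) $\Leftrightarrow$ (3). By Proposition \ref{L1andL}, for every simplicial set $Z$ the canonical inclusion $\widehat{\mathfrak{C}}(Z) \to \mathbb{C}(Z) = \iota \mathcal{L} \mathfrak{C}(Z)$ is a natural weak equivalence of simplicial categories. Applying this naturality to $f \colon X \to Y$ yields a commutative square whose vertical arrows are weak equivalences, so by the two-out-of-three property, $\widehat{\mathfrak{C}}(f)$ is a weak equivalence in $\cat{Cat}_{\cat{sSet}, B}$ if and only if $\iota \mathcal{L} \mathfrak{C}(f)$ is. Since $\iota \colon \cat{Gpd}_{\cat{sSet}, DK} \to \cat{Cat}_{\cat{sSet}, B}$ both preserves and reflects weak equivalences (a map of simplicial groupoids is a Dwyer-Kan equivalence precisely when its underlying map of simplicial categories is a Bergner equivalence), this is in turn equivalent to $\mathcal{L} \mathfrak{C}(f)$ being a weak equivalence of simplicial groupoids.

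Next I would establish (3) $\Leftrightarrow$ (1). By Corollary \ref{LandG}, the natural transformation $\mathcal{L} Sz \colon \mathcal{L} \mathfrak{C} \xRightarrow{} G^{Kan}$ is a natural weak equivalence, so another two-out-of-three argument reduces (3) to the statement that $G^{Kan}(f) \colon G^{Kan}(X) \to G^{Kan}(Y)$ is a weak equivalence of simplicial groupoids. Now $G^{Kan} \colon \cat{sSet}_{KQ} \rightleftarrows \cat{sGpd}_{DK} \colon \overline{W}^{Kan}$ is a Quillen equivalence and every object of $\cat{sSet}_{KQ}$ is cofibrant. A left Quillen equivalence both preserves and reflects weak equivalences between cofibrant objects (preservation is Ken Brown's lemma; reflection follows from the definition of a Quillen equivalence combined with the fact that the derived unit is a weak equivalence on every cofibrant object). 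Therefore $G^{Kan}(f)$ is a weak equivalence if and only if $f$ is a weak homotopy equivalence, which completes the chain of equivalences.

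The argument is essentially formal given the technical content already in place, so there is no single hard step. If anything, the only point requiring care is to justify that $\iota$ reflects weak equivalences and that $G^{Kan}$, being a Quillen equivalence with every source object cofibrant, reflects weak equivalences on the nose; both are standard but worth stating explicitly.
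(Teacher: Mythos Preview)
Your argument is correct. The equivalence (2) $\Leftrightarrow$ (3) is handled exactly as in the paper, via Proposition \ref{L1andL} and the fact that $\iota$ creates weak equivalences. For (1) $\Leftrightarrow$ (3) you take a slightly different path: rather than invoking Corollary \ref{Qequi} directly (which says that $\mathcal{L}\mathfrak{C}$ is itself the left adjoint of a Quillen equivalence, so preserves and reflects weak equivalences since every simplicial set is cofibrant and every simplicial groupoid is fibrant), you first use the natural weak equivalence $\mathcal{L}Sz$ from Corollary \ref{LandG} to transfer the question to $G^{Kan}(f)$, and then appeal to the Quillen equivalence $G^{Kan} \dashv \overline{W}^{Kan}$ of Theorem \ref{kan Qequivalence}. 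Both routes amount to the same formal move (a left Quillen equivalence out of a category in which everything is cofibrant into one in which everything is fibrant detects weak equivalences); the paper's version is one step shorter because it avoids the detour through $G^{Kan}$, while yours has the mild advantage of only invoking the classical Dwyer--Kan equivalence rather than the localized version of Corollary \ref{Qequi}.
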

\begin{proof} The equivalence between (2) and (3) follows from Proposition \ref{L1andL}. The equivalence between (1) and (3) follows from Corollary \ref{Qequi} since every object of $\cat{sSet}_{KQ}$ is cofibrant and every object of $\cat{Gpd}_{\cat{sSet}, DK}$ is fibrant. 
\end{proof}

\subsection{Comparing the localized rigidification and the path category functors}
Given any simplicial set $X$, denote by $\mathbb{P}(X) = \text{Sing} \mathcal{P} |X|$ the simplicial category defined by applying the functor $\text{Sing}: \cat{Top} \to \cat{sSet}$ to the morphisms spaces of $\mathcal{P}|X|$. Let $\mathcal{N}: \cat{Cat}_{\cat{sSet}} \to \cat{sSet}$ be the classical simplicial nerve functor defined by applying the ordinary nerve level-wise to obtain a bisimplicial set and then taking the diagonal. 

\begin{Prop} The simplicial categories $\mathbb{P}(X)$ and $\mathbb{C}(X)$ are naturally weak equivalent. 
\end{Prop}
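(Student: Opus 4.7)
The plan is to extend the chain of weak equivalences already assembled in this section. Combining Propositions \ref{L1andL} and \ref{inverting1} with Corollary \ref{K1andK}, we obtain natural Bergner weak equivalences
$$\mathbb{C}(X) \xleftarrow{\simeq} \widehat{\mathfrak{C}}(X) \xrightarrow{\simeq} \mathfrak{C}(\mathcal{K}_1(X)) \xrightarrow{\simeq} \mathfrak{C}(\mathcal{K}(X)).$$
On the other hand, as recalled in the introduction, $\mathbb{P}$ sends weak homotopy equivalences of simplicial sets to Bergner weak equivalences (the realization preserves weak equivalences and the Moore path functor preserves weak equivalences of topological spaces). Hence the weak homotopy equivalence $X \to \mathcal{K}(X)$ of Proposition \ref{Kanrep} yields $\mathbb{P}(X) \xrightarrow{\simeq} \mathbb{P}(\mathcal{K}(X))$. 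It therefore suffices to produce a natural weak equivalence $\mathfrak{C}(\mathcal{K}(X)) \simeq \mathbb{P}(\mathcal{K}(X))$.

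To handle this, I would invoke the Quillen equivalence $(\mathfrak{C}, \mathfrak{N}) \colon \cat{sSet}_J \rightleftarrows \cat{Cat}_{\cat{sSet}, B}$ of Definition \ref{homotopy coherent nerve}. The mapping spaces of $\mathbb{P}(\mathcal{K}(X))$ are Kan complexes, so $\mathbb{P}(\mathcal{K}(X))$ is Bergner fibrant, while $\mathcal{K}(X)$ is Joyal cofibrant. Applying $\mathfrak{C}$ to the adjunction unit $\mathcal{K}(X) \to \mathfrak{N}\mathbb{P}(\mathcal{K}(X))$ and composing with the derived counit produces a natural comparison map $\mathfrak{C}(\mathcal{K}(X)) \to \mathbb{P}(\mathcal{K}(X))$, which is a Bergner weak equivalence provided the unit above is a Joyal equivalence. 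For any Kan complex $K$, $\mathfrak{N}\mathbb{P}(K)$ is a standard model for $\text{Sing}|K|$, and the canonical map $K \to \text{Sing}|K|$ is a weak homotopy equivalence; since both source and target are Kan complexes whose homotopy categories are groupoids, it is also a Joyal equivalence.

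The main obstacle is precisely this classical assertion that $\mathfrak{N}\mathbb{P}(K) \simeq \text{Sing}|K|$ naturally for Kan complexes $K$, or equivalently that $\mathfrak{C}(K) \simeq \mathbb{P}(K)$ naturally for Kan complexes. A more concrete alternative, better suited to the perspective of this paper, would use the necklace description of $\mathfrak{C}$ in Proposition \ref{dsnecklaces}: for $N = \Delta^{n_1} \vee \cdots \vee \Delta^{n_k} \to K$ one sends the associated cube $(\Delta^1)^{\times \text{dim}(N)}$ into $\mathbb{P}(K)(\alpha_N, \omega_N)$ by interpreting the beads as Moore paths and concatenating at the wedge points. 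Verifying that the induced map on mapping spaces is a weak equivalence is the genuine work; it can be approached by induction on the skeleta of $K$, using that for Kan complexes all $1$-morphisms in $\mathfrak{C}(K)$ are already invertible up to homotopy, so no further localization is needed on either side of the comparison.
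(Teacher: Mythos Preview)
Your reduction to comparing $\mathfrak{C}(\mathcal{K}(X))$ with $\mathbb{P}(\mathcal{K}(X))$ is fine, but the step where the argument actually has to do work is the assertion that ``$\mathfrak{N}\mathbb{P}(K)$ is a standard model for $\text{Sing}|K|$'' for Kan complexes $K$. This is not a triviality one can simply cite; it is essentially equivalent to the statement being proved. You flag it yourself as ``the main obstacle'' and then offer two sketches, neither of which closes the gap: the first just restates the claim, and the necklace approach would require a genuine acyclic-models or skeletal-induction argument that you do not carry out.

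The paper's proof addresses exactly this point, but by a different and more direct route that avoids the Kan replacement $\mathcal{K}(X)$ altogether. It shows that for any fibrant groupoid $\mathcal{C}$ (such as $\mathbb{P}(X)$) there are natural weak homotopy equivalences
\[
\mathcal{N}(\mathcal{C}) \xrightarrow{\simeq} \overline{W}(\mathcal{C}) \xrightarrow{\simeq} \mathfrak{N}(\mathcal{C}),
\]
the second coming from Proposition \ref{WandN} and the first from a comparison due to Hinich. It then identifies $|\mathcal{N}(\mathbb{P}(X))|$ with May's $\mathcal{B}\Omega|X|$ and invokes May's classical result $\mathcal{B}\Omega|X| \simeq |X|$. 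This yields $\mathfrak{N}(\mathbb{P}(X)) \simeq X$ directly, and the Quillen equivalence of Corollary \ref{Qequi} finishes. So the missing ingredient in your argument is precisely the comparison of nerve functors together with May's delooping theorem; once you supply that, your detour through $\mathcal{K}(X)$ becomes unnecessary.
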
 

\begin{proof} 
Let $\mathcal{C}$ be a fibrant groupoid, i.e. a fibrant simplicial category such that the homotopy category is a groupoid. We claim that for any such $\mathcal{C}$ there are natural weak homotopy equivalences of Kan complexes
\begin{equation}\label{weakeqs}
\mathcal{N}(\mathcal{C} ) \xrightarrow{\simeq} \overline{W}(\mathcal{C}) \xrightarrow{\simeq} \mathfrak{N}(\mathcal{C}).
\end{equation}
Note that any such $\mathcal{C}$ is naturally weak equivalent to a simplicial groupoid. In fact, let $\mathcal{Q}: \cat{Cat}_{\cat{sSet}} \to \cat{Cat}_{\cat{sSet}}$ be a cofibrant replacement functor in the Bergner model structure and note there are natural weak equivalences of fibrant simplicial categories $$\mathcal{C} \xleftarrow{\simeq } \mathcal{Q}(\mathcal{C}) \xrightarrow{ \simeq}  \iota \mathcal{L}\mathcal{Q}(\mathcal{C}).$$ The first map is a weak equivalence by definition and the second by Proposition 9.5 of \cite{dwyer1980simplicial}. The three functors $\mathcal{N}$, $\conj{W},$ and $\mathfrak{N}$ send weak equivalences between fibrant groupoids to weak homotopy equivalences between Kan complexes, hence we may assume (without loss of generality) that $\mathcal{C}$ is a simplicial groupoid.

When $\mathcal{C}$ is a simplicial groupoid, the weak homotopy equivalence $\conj{W}(\mathcal{C}) \to \mathfrak{N}(\mathcal{C})$ is given by the map $Sz$, as proven in Proposition \ref{WandN}. A natural weak homotopy equivalence $\mathcal{N}(C) \to \conj{W}(\mathcal{C})$ is constructed in A.5.1 of \cite{hinich2001dg}. 

We now argue that $\mathcal{N}(\mathbb{P}(X) )$ and $X$ are naturally weak homotopy equivalent. Without loss of generality we may assume $X$ is connected. Then the space $|\mathcal{N}\mathbb{P}(X)|$ is weak homotopy equivalent to $\mathcal{B} \Omega|X|$, where $\mathcal{B}$ is the classifying space functor (or geometric bar construction) constructed in \cite{may1975classifying} and $\Omega|X|$ is the (Moore) based loop space of $|X|.$ By Lemma 15.4 of \cite{may1975classifying}, there is a natural weak homotopy equivalence $\mathcal{B}\Omega |X| \to |X|.$ Hence, $\mathcal{N}(\mathbb{P}(X) )$ and $X$ are naturally weak homotopy equivalent. Consequently, $\mathfrak{N}(\mathbb{P}(X) ) $ and $X$ are naturally weak homotopy equivalent. By Corollary \ref{Qequi}, $\mathbb{C}(X)$ and $\mathbb{P}(X)$ are naturally weak equivalent as simplicial categories. 

\end{proof}

We summarize all of the weak equivalences between the simplicial categories described above. These are all combinatorial models for the path category of the geometric realization of an arbitrary simplicial set, thus proving the second part of Theorem \ref{theorem1}. 
\begin{Thm}\label{equivalencesofsimplicialcategories} For any simplicial set $X$, the following simplicial categories are all naturally weakly equivalent $$\mathbb{P}(X) \simeq \widehat{\mathfrak{C}}(X) \simeq  \mathbb{C}(X)\simeq  \mathfrak{C}(\mathcal{K}_1(X))\simeq \mathfrak{C}(\mathcal{K}(X)) \simeq \mathbb{G}(X).$$
\end{Thm}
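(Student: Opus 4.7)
The theorem is a summary statement that collects together the chain of natural weak equivalences established piecewise earlier in the section, so the plan is essentially one of assembly rather than new technical work. My approach is to walk through each named equivalence in the display and cite the corresponding earlier result, checking that all the natural transformations involved can be composed consistently (i.e. they live in the correct direction to form a zig-zag).

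First, I would dispatch the equivalences internal to the rigidification side. The equivalence $\widehat{\mathfrak{C}}(X) \simeq \mathbb{C}(X)$ is exactly Proposition \ref{L1andL}, via the canonical comparison map $\mu_X$. Then $\widehat{\mathfrak{C}}(X) \simeq \mathfrak{C}(\mathcal{K}_1(X))$ is Proposition \ref{inverting1}, obtained by expressing both as pushouts along $\mathfrak{C}(X^1)$ and using the cofibrancy of $\mathfrak{C}(\mathcal{K}_1(X^1))$. Next, $\mathfrak{C}(\mathcal{K}_1(X)) \simeq \mathfrak{C}(\mathcal{K}(X))$ is Corollary \ref{K1andK}, since $\mathcal{K}_1(X) \to \mathcal{K}(X) = \mathcal{J}(\mathcal{K}_1(X))$ is a Joyal equivalence and $\mathfrak{C}$ sends Joyal equivalences to Bergner weak equivalences. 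The equivalence $\widehat{\mathfrak{C}}(X) \simeq \mathbb{G}(X)$ is Corollary \ref{corthm1}, which packages the Szczarba map $Sz_X$ together with the localization Proposition \ref{L1andL} and the Quillen-equivalence output of Corollary \ref{LandG}. This handles the right half of the chain.

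For the remaining piece $\mathbb{P}(X) \simeq \mathbb{C}(X)$, I would invoke the proposition immediately preceding Theorem \ref{equivalencesofsimplicialcategories}, which produces a natural zig-zag of weak equivalences between $\mathbb{P}(X)$ and $\mathbb{C}(X)$ via the intermediate classifying-space comparisons
\[
\mathcal{N}(\mathbb{P}(X)) \xrightarrow{\simeq} \overline{W}(\mathbb{P}(X)) \xrightarrow{\simeq} \mathfrak{N}(\mathbb{P}(X))
\]
together with the Quillen equivalence $\mathfrak{C} \dashv \mathfrak{N}$ and May's $\mathcal{B}\Omega|X| \simeq |X|$.

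Strictly speaking there is nothing left to obstruct us, since every pairwise equivalence in the chain has already been established; the only \emph{expository} care required is to verify that each pair of adjacent simplicial categories in the displayed chain is linked by a \emph{natural} zig-zag (rather than merely being abstractly weakly equivalent), so that the composite equivalence $\mathbb{P}(X) \simeq \mathbb{G}(X)$ is itself natural in $X$. This is immediate since each cited result is phrased in terms of natural transformations ($\mu_X$, $Sz_X$, the fibrant replacement $\mathcal{J}$, and the nerve comparisons), so the zig-zag assembles into a natural one. Concretely, I would simply write the composite chain
\[
\mathbb{P}(X) \xleftarrow{\simeq} \cdots \xrightarrow{\simeq} \mathbb{C}(X) \xleftarrow{\mu_X}{\simeq} \widehat{\mathfrak{C}}(X) \xrightarrow{\simeq} \mathfrak{C}(\mathcal{K}_1(X)) \xrightarrow{\mathfrak{C}(\mathcal{J})}{\simeq} \mathfrak{C}(\mathcal{K}(X))
\]
and separately the map $\iota(\mathcal{L}(Sz_X)) \circ \mu_X \colon \widehat{\mathfrak{C}}(X) \xrightarrow{\simeq} \mathbb{G}(X)$, and conclude by transitivity of natural weak equivalence. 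No step genuinely requires new argument, so there is no main obstacle beyond careful bookkeeping of directions of the maps in the zig-zag.
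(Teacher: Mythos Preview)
Your proposal is correct and matches the paper's approach exactly: the theorem is stated in the paper as a summary with no separate proof, simply collecting Proposition~\ref{L1andL}, Corollary~\ref{corthm1}, Proposition~\ref{inverting1}, Corollary~\ref{K1andK}, and the proposition on $\mathbb{P}(X)\simeq\mathbb{C}(X)$ immediately preceding it. Your bookkeeping of the directions of the natural maps is accurate, and no additional argument is needed.
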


\section{Algebraic models for the path category}
The goal of this section is to prove Theorem \ref{theorem2} in the introduction. The main construction, the functor denoted by $\widehat{\mathbf{\Omega}}$, takes a graded coalgebra equipped with extra structure and produces a category enriched over the monoidal category of differential graded (dg) coalgebras. When $\widehat{\mathbf{\Omega}}$ is applied to a suitable model for the chains on a simplicial set, it produces a model for its path category. Throughout this section we fix a commutative ring $R$. We often consider $R$ as a dg $R$-module concentrated in degree $0$. 

\subsection{Categorical coalgebras}
We describe a notion of coalgebras equipped with further structure to which one may naturally associate a dg category through a version of the cobar construction. Categorical coalgebras are particular examples of \textit{pointed curved coalgebras}, a notion introduced in \cite{holstein-lazarev}. Similar notions are studied in \cite{KM21}, in the setting of no differential and no curvature considerations.

\begin{Def} \label{Coalgebras}
A \textbf{categorical $R$-coalgebra} consists of the data $C=(C, \Delta, \partial, h)$
where
\begin{itemize}
    \item $C= \bigoplus_{i=0}^{\infty} C_i$  is a non-negatively graded $R$-module.
    \item $\Delta: C \to C \otimes C$ is a degree $0$ coassociative counital coproduct with counit $\varepsilon: C \to R$.

    \item 
    The set \[P_C:= \{ p \in C : \Delta(p)= p\otimes p, \varepsilon(p)=1_{R}\}\] of
``set-like'' elements in $C$ is non-empty and \[C_0 \cong R[P_C].\] 
We call the elements of $P_C$ the \textbf{objects} of $C$. 
    \item $\partial: C \to C$ is a linear map of degree $-1$ which is a graded coderivation of $\Delta.$
    \item The projection map $\epsilon: C \to C_0$ satisfies $\epsilon \circ \partial=0.$
    \item $h: C \to R$ is a linear map of degree $-2$ satisfying $h \circ \partial =0$ and $$\partial \circ \partial= (h \otimes \text{id}) \circ (\Delta - \Delta^{op})$$
    where $\Delta^{op}= t \circ \Delta$ for $t(x \otimes y)= (-1)^{|x||y|} y \otimes x$. The right hand side of the above equation is being considered as a map $C \to R \otimes C \cong C.$ The map $h$ is called the \textbf{curvature} of $C$. The above equation may be written as
    \[d^2(x) = \sum_{(x)} h(x')x'' + x'h(x'').\]

\end{itemize}
\end{Def}

\begin{Rem} Note that any categorical coalgebra $C$ has a natural $C_0$-bicomodule structure given by the maps
$$\rho_l: C \xrightarrow{\Delta} C \otimes C \xrightarrow{\epsilon \otimes \text{id}_C} C_0 \otimes C$$
and
$$\rho_r: C \xrightarrow{\Delta} C \otimes C \xrightarrow{\text{id}_C \otimes \epsilon} C \otimes C_0.$$
Furthermore, the coassociativity of $\Delta: C \to C\otimes C$ implies that $\Delta(C) \subseteq C \underset{C_0}{\square} C,$
where 
$$C \underset{C_0}{\square} C := \text{ker} (\rho_r \otimes \text{id}_C - \text{id}_C \otimes \rho_l) \subseteq C \otimes C.$$ 
More generally, for any dg $R$-coalgebra $C$, any right dg $C$-comodule $M$, and any left dg $C$-comodule $N$, define $$M \underset{C}{\square} N = \text{ker} (\rho_r \otimes \text{id}_N - \text{id}_M \otimes \rho_l) \subseteq M \otimes N,$$ where $\rho_r: M \to M \otimes C$ and $\rho_l: N \to C \otimes N$ are the respective coactions. This is called the \textbf{cotensor product of $M$ and $N$ over $C$}. When there is no risk of confusion we write $M \square N.$
\end{Rem}

\begin{Rem}
When $R$ is a field, categorical coalgebras are special types of \textit{pointed curved coalgebras} as defined in \cite{holstein-lazarev}. A pointed curved coalgebra is a curved coalgebra whose coradical is a direct sum of copies of $R$ together with the structure of a splitting of the coradical satisfying certain properties. In the case of categorical coalgebras, we have a unique splitting given by the projection map $\epsilon: C \to C_0.$ 

\end{Rem}

Morphisms of categorical coalgebras are defined as follows.

\begin{Def} A \textbf{morphism between categorical coalgebras} $C=(C, \Delta, \partial, h)$ and  $C'=(C', \Delta', \partial', h')$  consists of a pair $(f_0, f_1)$ where

\begin{itemize}
\item $f_0: (C, \Delta) \to (C', \Delta')$ is a morphism of graded $R$-coalgebras, 
\item $f_1: C \to C'_0$ a $C_0'$-bicomodule map such that the composition $\bar{f_1}= \varepsilon' \circ f_1$, where $\varepsilon'$ is the counit of $C'$, is a degree $-1$ map satisfying
$$f_0 \circ \partial = \partial' \circ f_0 + (\bar{f_1} \otimes f_0) \circ (\Delta - \Delta^{op})$$
and
$$h' \circ f_0= h + \bar{f_1} \circ \partial + (\bar{f_1} \otimes \bar{f_1}) \circ \Delta.$$ 
\end{itemize}
The composition of two morphisms of categorical coalgebras is defined by  $$(g_0, g_1) \circ (f_0, f_1)= (g_0 \circ f_0 , g_1 \circ f_0 + g_0 \circ f_1).$$
Denote by $\cat{cCoalg}_R$ the category of categorical coalgebras. 
\end{Def}

\begin{Def} Denote by $\cat{dgCat}_R^{\geq 0}$ the category of non-negatively graded dg categories, i.e. categories enriched over the monoidal category $(\cat{Ch}_R^{\geq 0}, \otimes)$ of non-negatively graded dg $R$-modules with tensor product. For short, we will call the objects in $\cat{dgCat}_R^{\geq 0}$ \textbf{dg categories}. Given any dg category $\cat{C}$ denote by $H_0(\cat{C})$ the category enriched over $R$-modules obtained by applying the $0$-th homology functor at the level of morphisms. More precisely, $H_0(\cat{C})$ has the same objects as $\cat{C}$ and for any two objects $x, y \in \cat{C}$, $H_0(\cat{C})(x,y)$ is the $0$-th homology of the dg $R$-module $\cat{C}(x,y).$ The composition in $H_0(\cat{C})$ is induced by the composition in $\cat{C}.$
A morphism of dg categories $f: \cat{C} \to \cat{D}$ is called a \textbf{quasi-equivalence} if it induces an equivalence of categories $H_0(f): H_0(\cat{C}) \xrightarrow{\cong} H_0(\cat{D})$ and for any $x,y \in \cat{C}$ the induced map $f: \cat{C}(x,y) \to \cat{D}(f(x),f(y))$ is a quasi-isomorphism. 
\end{Def}

Any categorical coalgebra gives rise to a dg category through the following version of the cobar construction, which is a many object generalization of the classical cobar construction for coaugmented coalgebras. 

\begin{Def}
Define the \textbf{cobar functor} $$\Omega: \cat{cCoalg}_R \to \cat{dgCat}_R^{\geq 0}$$ as follows. Given any $C= (C, \Delta, \partial, h) \in \cat{cCoalg}_R$, the objects of $\Omega(C)$ are the elements of the set $P_C$.

Any element $x \in P_C$ determines a map $i_x: R\to C_0=R[P_C]$ given by $i_x(1_R)= x$. The map $i_x$ gives rise to a $C_0$-bicomodule structure on $R$ with $C_0$-coaction maps $$R \cong R \otimes R \xrightarrow{i_x \otimes \text{id}_R} C_0 \otimes R$$ and $$R \cong R \otimes R \xrightarrow{\text{id}_R \otimes i_x} R \otimes C_0.$$ We denote this $C_0$-bicomodule by $R[x].$

Write $C= \bar{C} \oplus C_0$ and denote by $s^{-1}\bar{C}$ the graded $R$-module obtained by applying the shift by $-1$ functor to the graded $R$-module $\bar{C}$. We have degree $-1$ induced maps $\bar{\partial}: s^{-1}\bar{C} \to s^{-1}\bar{C}$, $\bar{\Delta}: s^{-1}\bar{C} \to s^{-1}\bar{C} \otimes s^{-1}\bar{C}$, and $\bar{h}: s^{-1}\bar{C} \to C_0$, where $\bar{h}$ is defined as 
the composition $$s^{-1}\bar{C} \xrightarrow{s^{+1}} \bar{C} \xrightarrow{\rho_r} C \otimes C_0 \xrightarrow{h \otimes \textit{id}} R \otimes C_0 \cong C_0.$$ 

For any two $x,y \in P_C$ define a non-negatively graded $R$-module by
$$\Omega(C)(x,y)= \bigoplus_{n=0}^{\infty} R[x] \square (s^{-1}\bar{C})^{\square n} \square R[y],$$
where $(s^{-1}\bar{C})^{\square n} $ denotes the $n$-fold cotensor product of the $C_0$-bicomodule $s^{-1}\bar{C}$ with itself. We define $(s^{-1}\bar{C})^{\square 0}=C_0$. 
The differential $$D: \Omega(C)(x,y)_n \to \Omega(C)(x,y)_{n-1}$$ is determined by 
$$D = \bar{h} + \bar{\partial} + \bar{\Delta}.$$

An straightforward computation yields $D \circ D=0,$ see Lemma 3.18 in \cite{holstein-lazarev}. The composition in $\Omega(C)$ is given by concatenation of monomials. This construction is functorial with respect to morphisms of categorical coalgebras, see Lemma 3.19 in \cite{holstein-lazarev}.
\end{Def}

\begin{Rem} The cobar functor defined above is part of a more general construction connecting pointed curved coalgebras and dg categories \cite{holstein-lazarev}. When $R$ is a field, this construction gives rise to a Quillen equivalence of model categories extending the corresponding result for conilpotent curved coalgebras and dg algebras. 
\end{Rem}

\subsection{Normalized chains as a categorical coalgebra}

Denote by $(\cat{dgCoalg}_R^{\geq 0}, \otimes)$ the monoidal category of non-negatively graded dg coassociative counital $R$-coalgebras. For short, we will call the objects of $\cat{dgCoalg}_R^{\geq 0}$ dg coalgebras.

Given any simplicial set $X$, denote by  $(N^{\Delta}_*(X), \partial)$ the dg $R$-module of normalized simplicial chains. The Alexander-Whitney coproduct, given on any simplex $\sigma \in X_n$ by
$$\Delta(\sigma) = \sum_{i=0}^n \sigma(0,...,i) \otimes \sigma(i,...,n),$$
induces a coassociative coproduct $$\Delta: N^{\Delta}_*(X) \to N^{\Delta}_*(X) \otimes N^{\Delta}_*(X)$$ of degree $0$. In the above formula, $\sigma(0,...,i)$ and $\sigma(i,...,n)$ denote the first $i$-th and last $(n-i)$-th faces of $\sigma,$ respectively. 
This construction gives rise to a functor 
$$C^{\Delta}_*: \cat{sSet} \to \cat{dgCoalg}^{\geq 0}_R$$ given by 
$$C^{\Delta}_*(X)=(N^{\Delta}_*(X), \partial, \Delta).$$

For any two simplicial sets $X$ and $Y$, the natural Eilenberg-Zilber shuffle map
$$\text{EZ}_{X,Y}: N^{\Delta}_*(X) \otimes N^{\Delta}_*(Y) \to N^{\Delta}_*(X \times Y)$$
is a map of dg coalgebras and consequently makes $C^{\Delta}_*$ into a lax monoidal functor, see 17.6 in \cite{EilMoo66}.

The projection map $\epsilon: N^{\Delta}_*(X) \to N^{\Delta}_0(X)$ does not satisfy $\epsilon \circ \partial=0.$ However, the differential $\partial$ may be modified to obtain a categorical coalgebra as follows

\begin{Def} \label{chains} Given any simplicial set $X \in \cat{sSet}$ define a categorical coalgebra $\widetilde{C}^{\Delta}_*(X) \in \cat{cCoalg_R}$ as follows.
The underlying graded $R$-module of  $\widetilde{C}^{\Delta}_*(X)$ is just the normalized chains $N^{\Delta}_*(X)$ given by $N^{\Delta}_n(X)= R[X_n]/D(X_n)$, where $D(X_n)\subseteq R[X_n]$ is the sub $R$-module  generated by degenerate $n$-simplices. 

Let $e: R[X_1] \to R$ be the linear map sending degenerate $1$-simplices to $0 \in R$ and non-degenerate $1$-simplices to $1 \in R$. The map $e$ induces a linear map $\widetilde{e}: N^{\Delta}_1(X) \to R.$ Define a new differential
$$\widetilde{\partial}: N^{\Delta}_*(X) \to N^{\Delta}_{*-1}(X)$$
by $$\widetilde{\partial}= \partial - (\text{id}\otimes \widetilde{e} - \widetilde{e}\otimes\text{id}) \circ \Delta.$$

The map $\widetilde{\partial}$ is a coderivation of $\Delta$ and the projection map $\epsilon: N^{\Delta}_*(X) \to N^{\Delta}_0(X)$ now satisfies $\epsilon \circ \widetilde{\partial}=0$. Finally, define $h: N^{\Delta}_2(X) \to R$ by $$h= (\widetilde{e} \otimes \widetilde{e}) \circ \Delta + \widetilde{e} \circ \partial$$

A routine check yields that
$$\widetilde{C}^{\Delta}_*(X)=(N^{\Delta}_*(X), \widetilde{\partial}, \Delta, h)$$
defines an object in $\cat{cCoalg}_R$. Furthermore, this construction gives rise to a functor
$$\widetilde{C}^{\Delta}_*: \cat{sSet} \to \cat{cCoalg}_R.$$

\end{Def}

\subsection{The cobar functor, necklaces, and cubes}
The cobar functor from categorical coalgebras to dg categories formalizes algebraically a combinatorial construction that associates to any simplicial set $X$ a higher category $\Pi(X)$. The objects of $\Pi(X)$ are the vertices of $X$ and the mapping spaces $\Pi(X)(x,y)$ are described in terms of necklaces inside $X$ starting at $x$ and ending at $y$. In this section, we define the functor $\Pi$, which may be regarded as a cubical version of $\mathfrak{C}$, and explain how it relates to the cobar functor. We build upon the notions introduced in section \ref{necklaces}.

\begin{Def} 
A \textbf{necklical set} is a functor $\cat{Nec}^{op} \to \cat{Set}$, i.e. a presheaf of sets over the category of necklaces. 
Let $\cat{nSet}$ be the category whose objects are necklical sets and morphisms are natural transformations. For example, any necklace $N \in \cat{Nec}$ gives rise to a necklical set $\mathcal{Y}(N)=\text{Hom}_{\cat{Nec}}( \_ , N)$, defining a Yoneda embedding functor
$$\mathcal{Y}: \cat{Nec} \to \cat{nSet}.$$ The monoidal structure on $\cat{Nec}$ induces a (non-symmetric) monoidal structure on $\cat{nSet}$ which we also denote by $$\vee: \cat{nSet} \times \cat{nSet} \to \cat{nSet}.$$ A small category enriched over the monoidal category $(\cat{nSet}, \vee)$ is called a \textbf{necklical category}. Denote by $\cat{Cat}_{\cat{nSet}}$ the category of necklical categories.
\end{Def}

\begin{Cons}
We define a functor $$\Pi: \cat{sSet} \to \cat{Cat}_{\cat{nSet}}$$ as follows. Given any $X \in \cat{sSet}$, the set objects of $\Pi(X)$ is defined to be $X_0$, the vertices of $X$. Given any two $x,y \in X_0$  define a necklical set of morphisms
$$\Pi(X)(x,y)= \underset{(f: N \to X) \in (\cat{Nec} \downarrow X)_{x,y}}{\text{colim}} \mathcal{Y}(N).$$ Composition is then induced by the monoidal structure $\vee: \cat{Nec} \times \cat{Nec} \to \cat{Nec}.$ This construction is clearly functorial with respect to maps of simplicial sets. 
\end{Cons}

The idea behind the construction is very natural: the mapping spaces $\Pi(X)(x,y)$ are obtained by gluing ``cells'' $\mathcal{Y}(N) \in \cat{nSet}$ corresponding to neckalces $N$ in $X$ starting at $x$ and ending at $y$. Composition is then given by concatenating necklaces. Each $\mathcal{Y}(N)$ may be interpreted as a cube as we now explain using the framework of cubical sets with connections. 

\begin{Def} Let $[1]=\{0 \to 1\}$ be the category with two objects and a single non-identity morphism between them. Denote by $[1]^n$ the Cartesian product of $n$ copies of $[1].$ We write $[1]^0$ for the category with one object and one morphism. Define a category $\square$ whose objects are the categories $\{ [1]^0, [1]^1, [1]^2,...\}$ and morphisms are generated by the following three types of functors:
\begin{itemize}
\item \textit{cubical co-face functors} $\delta^{\epsilon}_{j,n}: [1]^n \to [1]^{n+1}$, where $j=0,1,...,n+1$, and $\epsilon \in \{0,1\}$, defined by
\begin{eqnarray*}
\delta^{\epsilon}_{j,n}(t_1,...,t_n)=(t_1,...,t_{j-1},\epsilon,t_j,...,t_n),
\end{eqnarray*}
\item \textit{cubical co-degeneracy functors} $\varepsilon_{j,n}: [1]^n \to [1]^{n-1}$, where $j=1,...,n$, defined by
\begin{eqnarray*}
\varepsilon_{j,n}(t_1,...,t_n)=(t_1,...,t_{j-1},t_{j+1},...,t_n), \text{ and }
\end{eqnarray*}
\item \textit{cubical co-connection functors} $\gamma_{j,n}: [1]^n \to [1]^{n-1}$, where $j=1,...,n-1$, $n\geq 2$, defined by
\begin{eqnarray*}
\gamma_{j,n}(t_1,...,t_n)=(t_1,...,t_{j-1},\text{max}(t_j,t_{j+1}),t_{j+2},...,t_n).
\end{eqnarray*}
\end{itemize}
The category $\square$ is called the \textbf{cube category with connections} and a presheaf $K: \square^{op} \to \cat{Set}$ is called a \textbf{cubical set with connections}.
Denote by $\square^n \in \cat{cSet}$ the cubical set with connections corepresented by $[1]^n$, i.e.  $$\square^n=\text{Hom}_{\square}( \_, [1]^n).$$
The cube category becomes a monoidal category when equipped with $[1]^n \times [1]^m = [1]^{n+m}$. This induces a monoidal structure $\boxtimes: \cat{cSet}\times \cat{cSet} \to \cat{cSet}$ given explicitly by
$$K \boxtimes L =  \underset{\sigma: \square^n \to K, \tau: \square^m \to L} {\text{colim}} \square^{n+m}.$$
\end{Def}

We now recall the definition of a strong monoidal functor
$$\mathcal{R}: (\cat{Nec}, \vee) \to (\square, \times)$$
constructed in \cite{rivera2018cubical}. 
First, let $\mathcal{R}(\Delta^0)=[1]^0$. On any other necklace $N \in \cat{Nec}$, define $\mathcal{R}(N) = [1]^{\dim(N)}$.
On morphisms, $\mathcal{R}$ is determined by the following rules.
\begin{enumerate}
	\item For any $\partial_j \colon \Delta^n \hookrightarrow \Delta^{n+1}$ such that $0< j<{n+1}$, define $$\mathcal{R}(\partial_j) \colon [1]^{n-1}\to [1]^{n}$$ to be the cubical coface functor  $\delta^0_{j,n-1}$.

	\item For any $\Delta_{[j], [n+1-j]} \colon \Delta^j \vee \Delta^{n+1-j} \hookrightarrow \Delta^{n+1}$ such that $0<j<n+1$, define
	$$\mathcal{R}(\Delta_{[j], [n+1-j]}) \colon [1]^{n-1}\to [1]^{n}$$
	to be the cubical coface functor $\delta^1_{j,n-1}$.

	\item We now consider morphisms of the form $s_j \colon \Delta^{n+1} \twoheadrightarrow \Delta^n$ for $n>0$.
	If $j=0$ or $j=n$, define $$\mathcal{R}(s_j) \colon [1]^n \to [1]^{n-1}$$ to be the cubical codegeneracy functor $\varepsilon_{j,n}$.
	If $0<j<n$, define $$\mathcal{R}(s_j) \colon [1]^n \to [1]^{n-1}$$ to be the cubical coconnection functor $\gamma_{j,n}$.

	\item For $s_0 \colon \Delta^1 \twoheadrightarrow \Delta^0$ define $$\mathcal{R}(s_0) \colon [1]^0 \to [1]^0$$ to be the identity functor.
\end{enumerate}
The functor $\mathcal{R}: \cat{Nec} \to \square$ induces a strong monoidal functor on categories of presheaves $$\mathcal{R}_!: (\cat{nSet}, \vee) \to (\cat{cSet}, \boxtimes)$$ given by 
$$\mathcal{R}_{!}(K) =
\underset{{\mathcal{Y}(N) \to K}} {\text{colim}} \mathcal{Y}_{\square}(\mathcal{R}(N))\cong
\underset{{\mathcal{Y}(N) \to K}}{\text{\colim}} \square^{\dim(N)},
$$
where $\mathcal{Y}_{\square}: \square \to \cat{cSet}$ denotes the Yoneda embedding.

We also have a monoidal functor
$$ \mathcal{T} : (\cat{cSet}, \boxtimes) \to (\cat{sSet}, \times)$$ given by 
$$\mathcal{T}(K) = \underset{\square^n \to K}{\text{colim}} (\Delta^1)^{\times n}.$$
Denote by $$\mathfrak{R}: \cat{Cat}_{\cat{nSet}} \to \cat{Cat}_{\cat{cSet}}$$
and
$$\mathfrak{T}: \cat{Cat}_{\cat{cSet}} \to \cat{Cat}_{\cat{sSet}}$$ the functors obtained by applying $\mathcal{R}_!$ and $\mathcal{T}$, respectively, at the level of mapping spaces. 

We now establish a relationship between $\Pi$ and $\mathfrak{C}$.

\begin{Prop} \label{factorC}
There is a natural isomorphism of functors $$ \mathfrak{T} \circ \mathfrak{R} \circ \Pi \cong \mathfrak{C}.$$
\end{Prop}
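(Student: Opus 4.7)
The plan is to compute both sides on mapping spaces, match them via the necklace description of $\mathfrak{C}$ recalled in Proposition \ref{dsnecklaces}, and then check compatibility with compositions using the monoidality of all functors involved.

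First I would unwind the definition of the mapping space $(\mathfrak{T}\mathfrak{R}\Pi(X))(x,y)$. By construction, $\Pi(X)(x,y) = \operatorname{colim}_{(f:N\to X)\in(\cat{Nec}\downarrow X)_{x,y}} \mathcal{Y}(N)$ in $\cat{nSet}$. Since $\mathcal{R}_! \colon \cat{nSet} \to \cat{cSet}$ is defined as a left Kan extension of $\mathcal{Y}_{\square}\circ \mathcal{R}$ along the Yoneda embedding, it preserves colimits and sends $\mathcal{Y}(N) \mapsto \square^{\dim(N)}$. Similarly $\mathcal{T}\colon \cat{cSet}\to \cat{sSet}$ is defined as a left Kan extension along the Yoneda embedding $\square\hookrightarrow\cat{cSet}$ of the functor $[1]^n\mapsto (\Delta^1)^{\times n}$, so it preserves colimits and $\mathcal{T}(\square^n) = (\Delta^1)^{\times n}$. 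Chaining these gives a natural isomorphism of simplicial sets
\begin{equation*}
(\mathfrak{T}\mathfrak{R}\Pi(X))(x,y) \;\cong\; \operatorname*{colim}_{(f:N\to X)\in(\cat{Nec}\downarrow X)_{x,y}} (\Delta^1)^{\times \dim(N)}.
\end{equation*}
By Proposition \ref{dsnecklaces}, the right-hand side is canonically isomorphic to $\mathfrak{C}(X)(x,y)$. I would call this isomorphism $\Phi_X\colon \mathfrak{T}\mathfrak{R}\Pi(X)(x,y) \xrightarrow{\cong} \mathfrak{C}(X)(x,y)$, and note that on objects both simplicial categories agree with $X_0$ by construction.

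Next I would verify that $\Phi_X$ respects composition. On the $\Pi$-side, composition is induced by the concatenation $\vee\colon \cat{Nec}\times\cat{Nec}\to\cat{Nec}$, which after extending by colimits turns into the monoidal product $\vee$ on $\cat{nSet}$. The functor $\mathcal{R}_!$ is strong monoidal $(\cat{nSet},\vee)\to(\cat{cSet},\boxtimes)$, so it transports this to concatenation of cubes, and $\mathcal{T}$ is monoidal $(\cat{cSet},\boxtimes)\to(\cat{sSet},\times)$ via the identifications $\mathcal{T}(\square^{n+m}) = (\Delta^1)^{\times(n+m)} \cong (\Delta^1)^{\times n}\times(\Delta^1)^{\times m}$. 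Hence the composition induced on $\mathfrak{T}\mathfrak{R}\Pi(X)$ is the one obtained by concatenating necklaces inside $X$ and multiplying the corresponding cubes. The second assertion of Proposition \ref{dsnecklaces} states precisely that the composition in $\mathfrak{C}(X)$ is the one induced by necklace concatenation as well, so $\Phi_X$ is compatible with compositions and defines an isomorphism of simplicial categories.

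Finally, naturality in $X$ follows because every step is a colimit over the slice categories $(\cat{Nec}\downarrow X)_{x,y}$, which are themselves strictly functorial in $X$, and all intermediate functors ($\mathcal{R}_!$, $\mathcal{T}$, and the nerve realization of $(\Delta^1)^{\times(-)}$) are defined in a way that commutes with pullback of these slices. I expect the main obstacle to be bookkeeping rather than substance: one must carefully check that the identifications $\mathcal{T}(\mathcal{R}_!(\mathcal{Y}(N))) \cong (\Delta^1)^{\times \dim(N)}$ coming from the two Kan-extension definitions match the cubes appearing in the Dugger--Spivak presentation on the nose (not just up to a permutation of factors), since the labeling of cube coordinates is dictated by the ordering of beads in $N$, and this is exactly the same convention used in \cite{DuSp11} and built into the definition of $\mathcal{R}$ recalled above. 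Once this matching is made, the isomorphism $\Phi\colon \mathfrak{T}\mathfrak{R}\Pi \Rightarrow \mathfrak{C}$ is determined.
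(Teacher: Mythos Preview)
Your proposal is correct and follows essentially the same argument as the paper: both compute the mapping spaces by pushing the necklace colimit through the colimit-preserving functors $\mathcal{R}_!$ and $\mathcal{T}$, identify the result with $(\Delta^1)^{\times\dim(N)}$, and invoke Proposition~\ref{dsnecklaces}. Your write-up is simply more detailed about monoidality, naturality, and the cube-coordinate bookkeeping than the paper's terse version.
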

\begin{proof}
For any $X \in \cat{sSet}$, the objects of $\mathfrak{T} \circ \mathfrak{R} \circ \Pi(X)$ are, by definition, the elements of $X_0$. For any two $x,y \in X_0$, we have the following natural isomorphisms of simplicial sets 
\begin{align*}\mathfrak{T} (\mathfrak{R}(\Pi(X)))(x,y) \cong \underset{f: N \to X \in (\cat{Nec} \downarrow X)_{x,y}}{\text{colim}} \mathcal{T}(\mathcal{R}_{!}(\mathcal{Y}(N))) \cong
\\
\underset{f: N \to X \in (\cat{Nec} \downarrow X)_{x,y}}{\text{colim}} \mathcal{T}(\square^{\text{dim}(N)})\cong \underset{f: N \to X \in (\cat{Nec} \downarrow X)_{x,y}}{\text{colim}} (\Delta^1)^{\times\text{dim}(N)}.
\end{align*}
Composition is induced by concatenation of necklaces. This is precisely the description of $\mathfrak{C}(X)$ provided by Proposition \ref{dsnecklaces}.
\end{proof} 

 \begin{Rem}

 The composition of functors $\mathfrak{R} \circ \Pi: \cat{sSet} \to \cat{Cat}_{\cat{cSet}}$ was studied in \cite{rivera2018cubical}, where it was denoted by $\mathfrak{C}_{\square_c}$ and called the \textbf{cubical rigidification functor}. Proposition \ref{factorC} above is exactly Proposition 5.3 of \cite{rivera2018cubical}. In this article, we would like to emphasize the factorization of $\mathfrak{C}_{\square_c}$ via $\Pi$. In fact, the framework of necklaces, necklical sets, and necklical categories uses the minimal amount of data necessary to define such a construction. However, the use of cubes is conceptually convenient. 
 \end{Rem}

We recall the definition of the strong monoidal functor of normalized cubical chains 
$$N^{\square}_*: ( \cat{cSet}, \boxtimes) \to (\cat{Ch}_R^{\geq 0}, \otimes).$$
First define $N^{\square}_k( \square^n )$ to be the quotient of the free $R$-module $R[\text{Hom}_{\square}([1]^k, [1]^n)]$ by the sub-$R$-module generated by $D_{k,n}^{\text{deg}} \cup D_{k,n}^{\text{con}}$, where
$$D_{k,n}^{\text{deg}}=\{ \alpha \in \text{Hom}_{\square}([1]^k, [1]^n)] | \alpha = \varepsilon_{j,n} \circ \alpha', \text{ for some } \alpha' \in \text{Hom}_{\square}([1]^k, [1]^{n+1}), 1\leq j\leq n+1\}$$
and
$$D_{k,n}^{\text{con}}=\{ \alpha \in \text{Hom}_{\square}([1]^k, [1]^n)] | \alpha= \gamma_{j,n} \circ \alpha', \text{ for some } \alpha' \in \text{Hom}_{\square}([1]^k, [1]^{n+1}), 1\leq j\leq n\}.$$
The differential $\delta: N^{\square}_k( \square^n ) \to N^{\square}_{k-1}( \square^n )$ is induced by the formula
$$\delta(\alpha)= \sum_{j=1}^k (-1)^j (\alpha \circ \delta^1_{j, k-1} - \alpha \circ \delta^0_{j, k-1}).$$
 Finally, for an arbitrary $K \in \cat{cSet}$, define $$N^{\square}_*(K) = \underset{\square^n \to K}{\text{colim }} N^{\square}_*(\square^n).$$ There is an induced functor $$\mathcal{N}^{\square}_*: \cat{Cat}_{\cat{cSet}} \to \cat{dgCat}_R^{\geq0}$$ given by applying the strong monoidal functor $N^{\square}_*$ at the level of mapping spaces. Denote  $$\mathcal{N}^{nec}_*=\mathcal{N}^{\square}_* \circ \mathfrak{R}: \cat{Cat}_{\cat{nSet}} \to \cat{dgCat}_R^{\geq 0}.$$

\begin{Prop}\label{cobarandnecklaces} There are natural isomorphisms of functors $$\mathcal{N}_*^{nec} \circ \Pi \cong \Lambda \cong \Omega \circ \widetilde{C}^{\Delta}_* ,$$
where $\Lambda: \cat{sSet} \to \cat{dgCat}_R^{\geq0}$ denotes the left adjoint of the differential graded nerve functor $\mathfrak{N}_{dg}:\cat{dgCat}_R^{\geq0} \to \cat{sSet}$ defined in 1.3.1.6 of \cite{higheralgebra}.
\end{Prop}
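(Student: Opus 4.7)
The plan is to prove both isomorphisms by showing that each of the three functors $\mathcal{N}^{nec}_*\circ\Pi$, $\Lambda$, and $\Omega\circ\widetilde{C}^{\Delta}_*$ preserves colimits in $\cat{sSet}$, so that each is determined by its restriction to the cosimplicial object $\Delta^\bullet$, and then producing identifications of the three dg categories on each $\Delta^n$ that are natural in $[n]\in\Delta$. Colimit preservation is immediate: $\Lambda$ is a left adjoint by definition; the functors $\Pi$, $\mathfrak{R}$, and $\mathcal{N}^{\square}_*$ are built from left Kan extensions, so $\mathcal{N}^{nec}_*\circ\Pi$ preserves colimits; and $\Omega$ is the left adjoint half of the cobar/nerve adjunction of \cite{holstein-lazarev}, while $\widetilde{C}^{\Delta}_*$ is a free-chains type construction that also preserves colimits.

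The central observation is that on $\Delta^n$ all three dg categories admit the same necklace-indexed presentation. A basis of $\Omega(\widetilde{C}^{\Delta}_*(\Delta^n))(i,j)$ consists of monomials $s^{-1}\sigma_1 \otimes \cdots \otimes s^{-1}\sigma_k$ with each $\sigma_\ell$ a non-degenerate simplex of $\Delta^n$ of positive dimension whose endpoints concatenate to run from $i$ to $j$; such data is precisely a necklace $N=\Delta^{n_1}\vee\cdots\vee\Delta^{n_k}\to\Delta^n$ in $(\cat{Nec}\downarrow\Delta^n)_{i,j}$ with non-degenerate beads, and its total shifted degree $\sum_\ell(n_\ell-1)$ equals $\dim N$. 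Under this bijection, the cobar differential $D=\bar h+\bar\partial+\bar\Delta$ becomes one of: taking an inner face of a bead (from $\bar\partial$, i.e.\ the Alexander--Whitney face terms), splitting a bead at an inner vertex (from $\bar\Delta$, i.e.\ the Alexander--Whitney coproduct), or contracting a $2$-simplex bead whose two $1$-faces are non-degenerate (from $\bar h$). On the other side, $\mathcal{N}^{nec}_*(\Pi(\Delta^n))(i,j) = N^{\square}_*(\mathcal{R}_!(\Pi(\Delta^n)(i,j)))$ has, in degree $m$, a basis given by the top cells of the cubes $\square^{\dim N}=\mathcal{R}(N)$ with $\dim N=m$; the codimension-one faces of $[1]^{\dim N}$ are exactly the face-at-$0$ and face-at-$1$ maps, which translate through $\mathcal{R}$ into ``take an inner face of a bead'' and ``split a bead at an inner vertex'' respectively. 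Cubical degeneracies and connections then correspond exactly to the degenerate simplices quotiented in the normalization of $\widetilde{C}^{\Delta}_*$, so the two dg modules agree, with composition on both sides given by concatenation of necklaces.

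For $\Lambda$, I unwind the dg nerve from \cite[1.3.1.6]{higheralgebra}: $\Lambda(\Delta^n)$ is the dg category freely generated on objects $\{0,\dots,n\}$ by morphisms $f_I\colon \min I\to\max I$ of degree $|I|-2$, one for each $I\subseteq[n]$ with $|I|\geq 2$, subject to a differential given by an alternating sum of ``face terms'' $f_{I\setminus\{i_k\}}$ and ``splitting terms'' obtained by cutting $I$ at an interior vertex. Sending $f_I$ to the length-one necklace $\Delta^{|I|-1}\to\Delta^n$ whose bead is the simplex on $I$ identifies generators, and the defining differential of $\Lambda$ becomes exactly the cobar differential on a length-one monomial; extending multiplicatively by concatenation of necklaces yields $\Lambda(\Delta^n)\cong\Omega(\widetilde{C}^{\Delta}_*(\Delta^n))$. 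Naturality in $[n]$ is a routine check, since coface and codegeneracy maps act on necklaces by post-composition, on the $f_I$ by reindexing subsets, and on cubes consistently. The main technical obstacle is sign-matching: one must verify that the alternating cubical boundary on $N^{\square}_*$, Lurie's differential on $\Lambda$, and the cobar differential $\bar h+\bar\partial+\bar\Delta$ (including the modification to $\widetilde{\partial}$ from Definition \ref{chains} and the correct placement of the shift $s^{-1}$) all agree under the necklace bijection; this reduces to a single inductive computation on a generic necklace.
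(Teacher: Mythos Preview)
Your proposal is correct in outline. The paper's own proof consists entirely of two citations: the isomorphism $\mathcal{N}^{nec}_*\circ\Pi\cong\Lambda$ is Theorem~6.1 of \cite{rivera2018cubical}, and $\Lambda\cong\Omega\circ\widetilde{C}^{\Delta}_*$ is Theorem~4.16 of \cite{holstein-lazarev}. What you have sketched is essentially the content of those two external arguments, so you have supplied more than the paper does at this point; your reduction to $\Delta^\bullet$ via colimit preservation and the explicit necklace-indexed matching of bases and differentials is exactly the strategy those references follow.

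One small caveat worth tightening: the claim that $\widetilde{C}^{\Delta}_*\colon\cat{sSet}\to\cat{cCoalg}_R$ preserves colimits is not entirely transparent, since morphisms in $\cat{cCoalg}_R$ carry an extra component $f_1$ and colimits there are not obviously computed on underlying graded modules. This is not a genuine obstacle, however: your necklace description of $\Omega(\widetilde{C}^{\Delta}_*(X))(x,y)$ as spanned by strings of non-degenerate positive-dimensional simplices with matching endpoints already applies to an arbitrary simplicial set $X$, not only to $\Delta^n$, and the same holds for $\mathcal{N}^{nec}_*\circ\Pi$ via the colimit formula for $\Pi(X)(x,y)$. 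So the matching can be carried out directly on $X$ and the reduction step sidestepped. The sign bookkeeping you flag (reconciling the modified differential $\widetilde\partial$, the curvature $h$, the cubical boundary, and Lurie's differential on the generators $f_I$) is indeed the only real labor, and it is what the cited references work out.
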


\begin{proof}
The isomorphism $\mathcal{N}_*^{nec} \circ \Pi \cong \Lambda$ is exactly the statement of Theorem 6.1 in \cite{rivera2018cubical}. The isomorphism $\Lambda \cong \Omega \circ \widetilde{C}^{\Delta}_*$ is shown in Theorem 4.16 of \cite{holstein-lazarev}.
\end{proof}



\subsection{$B_{\infty}$-coalgebras} We introduce the notion of a $B_{\infty}$-coalgebra, i.e. a categorical coalgebra equipped with the extra structure of compatible coassociative coproducts on the chain complexes of morphisms of its cobar construction. This notion is the many object version of the linear dual of a $B_{\infty}$-algebra, as introduced in \cite{GJ94}.
\begin{Def}
A $\mathbf{B}_{\infty}$\textbf{-coalgebra} is a categorical coalgebra $C$ equipped with degree $0$ coassociative coproducts
$$\nabla_{x,y}: \Omega(C)(x,y) \to \Omega(C)(x,y) \otimes \Omega(C)(x,y)$$
for all $x,y \in P_C$ making $\Omega(C)$ into a category enriched over $(\cat{dgCoalg}_R^{\geq 0}, \otimes).$ If $C$ is a differential (non-negatively) graded connected coalgebra, considered as a categorical coalgebra with a single object, then a $B_{\infty}$-coalgebra structure is equivalent to a coassociative coproduct on the ordinary cobar construction $\nabla: \Omega(C) \to \Omega(C) \otimes \Omega(C)$ making the dg algebra $\Omega(C)$ into a dg bialgebra. 
\end{Def}
$B_{\infty}$-coalgebras form a category $\cat{B_{\infty}Coalg}_R$ with morphisms given by morphisms of categorical coalgebras that respect the additional structure. If $C$ is a categorical coalgebra with a single object, i.e. $P_C$ is a singleton, then a $B_{\infty}$-coalgebra structure is equivalent to a coproduct on the dg algebra $\Omega(C)$ making it into a dg bialgebra. The cobar construction may now be considered as a functor
$$\mathbf{\Omega}: \cat{B_{\infty}Coalg}_R \to \cat{Cat}_{\cat{dgCoalg}_{R}^{\geq 0}},$$ where $\cat{Cat}_{\cat{dgCoalg}_{R}^{\geq 0}}$ denotes the category of categories enriched over the monoidal category $(\cat{dgCoalg}_R^{\geq 0}, \otimes).$ A morphism in $\cat{Cat}_{\cat{dgCoalg}_{R}^{\geq 0}}$ will be called a quasi-equivalence if it is a quasi-equivalence of underlying dg categories.

\subsection{Normalized chains as a $B_{\infty}$-coalgebra}

 Let $\widetilde{C}^{\Delta}_*: \cat{sSet} \to \cat{cCoalg}_R$ be the normalized chains functor as defined in Definition \ref{chains}. For any $X \in \cat{sSet}$ and $x,y \in X_0$ we construct a natural coassociative coproduct
$$\nabla_{x,y}: \Omega \widetilde{C}^{\Delta}_*(X)(x,y) \to  \Omega \widetilde{C}^{\Delta}_*(X)(x,y)  \otimes  \Omega \widetilde{C}^{\Delta}_*(X)(x,y).$$ 

Note that we have a canonical isomorphism of dg $R$-modules $$(N^{\square}_*(\square^n), \delta) \cong (N^{\Delta}_*(\Delta^1), \partial)^{\otimes n}.$$
Since $N^{\Delta}_*(\Delta^1)$ is a dg coalgebra when equipped with Alexander-Whitney coproduct, the above isomorphism yields a dg coalgebra structure on $N^{\square}_*(\square^n)$ via the monoidal structure of $\cat{dgCoalg}_R^{\geq 0}$. This gives rise to a coassociative coproduct of degree $0$ denoted by  $$\nabla: N^{\square}_*( \square^n ) \to N^{\square}_*( \square^n ) \otimes N^{\square}_*( \square^n )$$ and known as the \textbf{Serre coproduct}. We write $C^{\square}_*(\square^n)= (N^{\square}_*(\square^n), \delta, \nabla) \in \cat{dgCoalg}_R^{\geq0}.$ Finally, for an arbitrary cubical set with connections $K$, define $$C^{\square}_*(K) = \underset{\square^n \to K}{\text{colim }} C^{\square}_*(\square^n) \in \cat{dgCoalg}_R^{\geq 0}.$$  
Note that the underlying dg $R$-module of $C^{\square}_*(K)$ is $(N^{\square}_*(K), \delta)$ so we write $$C^{\square}_*(K)= (N^{\square}_*(K), \delta, \nabla) \in \cat{dgCoalg}_R^{\geq 0}.$$ This construction gives rise to a strong monoidal functor
$$C^{\square}_*: (\cat{cSet}, \boxtimes) \to (\cat{dgCoalg}_R^{\geq 0}, \otimes).$$
Consequently, we obtain an induced functor
$$\mathcal{C}_*^{\square}: \cat{Cat}_{\cat{cSet}} \to \cat{Cat}_{\cat{dgCoalg}_R^{\geq 0}}$$
by applying $C^{\square}_*$ at the level of mapping spaces. Define
$$\mathcal{C}^{nec}_*: \cat{Cat}_{\cat{nSet}} \to \cat{Cat}_{\cat{dgCoalg}_R^{\geq 0}}$$ to be
 the composition 
$$ \cat{Cat}_{\cat{nSet}}  \xrightarrow{\mathfrak{R}} \cat{Cat}_{\cat{cSet}} \xrightarrow{\mathcal{C}^{\square}_*} \cat{Cat}_{\cat{dgCoalg}_R^{\geq 0}}.$$ The following is now a straightforward consequence of the definitions.
\begin{Prop}\label{coalgebraoncobar}
The composition of functors $\mathcal{C}^{nec}_* \circ \Pi$ provides a lift of  $$\mathcal{N}^{nec}_* \circ \Pi : \cat{sSet} \to \cat{dgCat}_R^{\geq 0}$$ to the category $\cat{Cat}_{\cat{dgCoalg}_R^{\geq 0}}$ of categories enriched over dg coalgebras. 
\end{Prop}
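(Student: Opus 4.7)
The assertion amounts to saying that if $\mathcal{U}: \cat{Cat}_{\cat{dgCoalg}_R^{\geq 0}} \to \cat{dgCat}_R^{\geq 0}$ denotes the functor obtained by applying the forgetful functor $\cat{dgCoalg}_R^{\geq 0} \to \cat{Ch}_R^{\geq 0}$ at the level of morphisms, then there is a natural isomorphism $\mathcal{U}\circ \mathcal{C}^{nec}_* \circ \Pi \cong \mathcal{N}^{nec}_*\circ \Pi$. Since both $\mathcal{C}^{nec}_*$ and $\mathcal{N}^{nec}_*$ are obtained from $\mathfrak{R}$ followed by a chain functor at mapping objects, it suffices to prove that $C^{\square}_*: (\cat{cSet}, \boxtimes)\to (\cat{dgCoalg}_R^{\geq 0}, \otimes)$ is a well-defined strong monoidal functor whose composition with the forgetful functor to $\cat{Ch}_R^{\geq 0}$ equals $N^{\square}_*$.

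The first step is to verify that $C^{\square}_*(\square^n) = (N^{\square}_*(\square^n), \delta, \nabla)$ is a dg coalgebra. Since $\nabla$ is defined through the canonical identification $N^{\square}_*(\square^n) \cong N^{\Delta}_*(\Delta^1)^{\otimes n}$ using the Alexander--Whitney coproduct on each tensor factor, coassociativity and compatibility with $\delta$ follow directly from the corresponding properties of $N^{\Delta}_*(\Delta^1)$ and the monoidality of $\otimes$ on $\cat{dgCoalg}_R^{\geq 0}$. The counit is induced by the augmentation.

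Next, I would check naturality of $\nabla$ with respect to morphisms in $\square$, i.e.\ that every cubical coface, codegeneracy, and coconnection induces a morphism of dg coalgebras under $N^{\square}_*$. For cofaces $\delta^{\epsilon}_{j,n}$ and codegeneracies $\varepsilon_{j,n}$ this is immediate from the fact that they correspond to face and degeneracy maps between simplicial sets of the form $(\Delta^1)^{\times n}$ under the Alexander--Whitney diagonal, since AW is natural for simplicial maps. The nontrivial step is the compatibility of the coconnection functors $\gamma_{j,n}$ with $\nabla$; this reduces, after unwinding the isomorphism with $N^{\Delta}_*(\Delta^1)^{\otimes n}$, to verifying that the chain map on $N^{\Delta}_*(\Delta^1)\otimes N^{\Delta}_*(\Delta^1)\to N^{\Delta}_*(\Delta^1)$ induced by the $\max$ operation on $[1]\times [1]\to [1]$ is a morphism of dg coalgebras. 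This can be checked by a finite direct computation on the basis of nondegenerate simplices of $(\Delta^1)^{\times 2}$. Having done this, $C^{\square}_*$ extends by colimits to a functor out of all of $\cat{cSet}$, and at the same time the strong monoidal isomorphism $C^{\square}_*(\square^{n+m})\cong C^{\square}_*(\square^n)\otimes C^{\square}_*(\square^m)$ is tautological from the very definition of $\nabla$ and extends by colimits to a strong monoidality constraint $C^{\square}_*(K)\otimes C^{\square}_*(L)\xrightarrow{\cong} C^{\square}_*(K\boxtimes L)$.

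Given strong monoidality, the standard change-of-enrichment procedure yields a well-defined functor $\mathcal{C}^{\square}_*: \cat{Cat}_{\cat{cSet}}\to \cat{Cat}_{\cat{dgCoalg}_R^{\geq 0}}$, and hence $\mathcal{C}^{nec}_* = \mathcal{C}^{\square}_*\circ \mathfrak{R}$ is well-defined. Finally, since by construction the underlying chain complex of $C^{\square}_*(K)$ is $(N^{\square}_*(K), \delta)$, the forgetful functor $\mathcal{U}$ sends $\mathcal{C}^{nec}_*\circ \Pi(X)$ to $\mathcal{N}^{nec}_*\circ \Pi(X)$, producing the required natural isomorphism. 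I expect the verification of naturality of the Serre coproduct under the coconnection maps to be the main technical obstacle; all other steps are either direct consequences of the tensor-product definition or colimit extensions.
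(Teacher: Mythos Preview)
Your proposal is correct and aligns with the paper's approach. The paper actually states this proposition as ``a straightforward consequence of the definitions'' and gives no proof; all the work is absorbed into the preceding construction of $C^{\square}_*$ as a strong monoidal functor $(\cat{cSet},\boxtimes)\to(\cat{dgCoalg}_R^{\geq 0},\otimes)$. What you have done is unpack exactly that construction, correctly identifying that the only nontrivial verification is compatibility of the Serre coproduct with the coconnection generators, which reduces to a finite check on $N^{\Delta}_*(\Delta^1)^{\otimes 2}\to N^{\Delta}_*(\Delta^1)$.

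One small imprecision: your justification for the coface and codegeneracy case appeals to ``naturality of AW for simplicial maps between $(\Delta^1)^{\times n}$'', but the Serre coproduct lives on $N^{\Delta}_*(\Delta^1)^{\otimes n}$, not on $N^{\Delta}_*((\Delta^1)^{\times n})$, and these are different complexes. The correct reason these cases are easy is that under the identification $N^{\square}_*(\square^n)\cong N^{\Delta}_*(\Delta^1)^{\otimes n}$ the cofaces and codegeneracies act \emph{factor-wise} (inserting a set-like element, respectively applying the counit, in a single tensor slot), and such maps are manifestly morphisms of tensor-product coalgebras. This does not affect the validity of your argument, only its phrasing.
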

By Proposition \ref{cobarandnecklaces}, we have a natural  isomorphisms of functors $$\mathcal{N}^{nec}_* \circ \Pi \cong \Omega \circ \widetilde{C}_*^{\Delta}.$$ Hence, Proposition \ref{coalgebraoncobar} provides a natural $B_{\infty}$-coalgebra structure on $\widetilde{C}_*^{\Delta}(X)$ for any simplicial set $X$. This gives rise to a functor
$$\widetilde{\mathbf{C}}_*: \cat{sSet} \to \cat{B_{\infty}Coalg}_R.$$

The following commutative diagram summarizes the main constructions of this section
\begin{equation}\label{diagram1}
    \begin{tikzcd}
    {\cat{Quiv}} \arrow[d, "j"] \arrow[r, "F"]  & {\cat{Cat}} \arrow[d, "\tau"]
   \\
{\cat{sSet}} \arrow[d, "\widetilde{\mathbf{C}}_*"] \arrow[r, "\Pi"]  & {\cat{Cat}_{\cat{nSet}}} \arrow[d, "\mathcal{C}^{nec}_*"] 
\\
{\cat{B_{\infty}Coalg}_R} \arrow[r, "\mathbf{\Omega}"]  & {\cat{Cat}_{\cat{dgCoalg}_{R}^{\geq 0}}}.
    \end{tikzcd}
\end{equation} 
In the above diagram, $\tau$ is the functor that associates to any category the induced necklical category with discrete mapping spaces, $j$ sends any quiver to its associated $1$-skeletal simplicial set, and $F$ sends any quiver to the free category that it generates. Consequently, the functor $\Pi: \cat{sSet} \to \cat{Cat}_{\cat{nSet}}$ may be regarded as a generalization of the free category generated by a quiver but now taking into account higher structure. 

\subsection{The extended cobar construction} We may use the coalgebra structure on the morphisms of the cobar construction of a $B_{\infty}$-coalgebra to define an extended version of $\mathbf{\Omega}$ as follows. Let $$S: (\cat{dgCoalg}_R^{\geq 0}, \otimes) \to (\cat{Set}, \times)$$ be the strong monoidal functor defined by $$S(C,\partial, \Delta) = \{ x \in C | \Delta(x) = x \otimes x\ \text{and } \varepsilon(x)=1\},$$ where $\varepsilon: C \to R$ denotes the counit. The elements of $S(C,\partial, \Delta)$ are called the \textbf{set-like elements}. Note that all set-like elements in a dg coalgebra $C \in \cat{dgCoalg}_R^{\geq 0}$ are of degree $0$. There is an induced functor
$$\mathcal{S}: \cat{Cat}_{\cat{dgCoalg}_R^{\geq 0}} \to \cat{Cat}$$ given by applying $S$ on mapping spaces. Given any $\cat{C} \in \cat{Cat}_{\cat{dgCoalg}_R^{\geq 0}}$ define $$\cat{C}[\mathcal{S}(\cat{C})^{-1}] \in \cat{Cat}_{\cat{dgCoalg}_R^{\geq 0}}$$
by formally (strictly) inverting all set-like elements in all the dg coalgebras of morphisms in $\cat{C}$ and declaring the newly added inverses to be set-like. 
\begin{Rem} We will only apply the above construction when $\cat{C}$ is a cofibrant dg category, in which case the strict localization is a homotopy invariant with respect to quasi-equivalences. 
\end{Rem}
\begin{Def}
The \textbf{extended cobar construction} is the functor
$$\widehat{\mathbf{\Omega}}: \cat{B_{\infty}Coalg}_R \to \cat{Cat}_{\cat{dgCoalg}_R^{\geq 0}}$$
given on any $C \in \cat{B_{\infty}Coalg}_R$ by  
$$\widehat{\mathbf{\Omega}}(C)= \mathbf{\Omega}(C)[ \mathcal{S}(\mathbf{\Omega}(C))^{-1}].$$
\end{Def}
We now have the following localized version of diagram \ref{diagram1}. 
\begin{equation}\label{diagram1}
    \begin{tikzcd}
    {\cat{Quiv}} \arrow[d, "j"] \arrow[r, "i \circ L \circ F"]  & {\cat{Cat}} \arrow[d, "\tau"]
   \\
{\cat{sSet}} \arrow[d, "\widetilde{\mathbf{C}}_*"] \arrow[r, "\widehat{\Pi}"]  & {\cat{Cat}_{\cat{nSet}}} \arrow[d, "\mathcal{C}^{nec}_*"] 
\\
{\cat{B_{\infty}Coalg}_R} \arrow[r, "\widehat{\mathbf{\Omega}}"]  & {\cat{Cat}_{\cat{dgCoalg}_{R}^{\geq 0}}}.
    \end{tikzcd}
\end{equation} 
Here $\widehat{\Pi}(X)$ is defined by formally inverting all $0$-dimensional necklaces in all the mapping spaces of $\Pi(X).$ The functors $L$ and $i$ are the localization and inclusion functors defined in Definition \ref{localizationofcategories}.

\subsection{Proof of Theorem \ref{theorem2}}
Part $(1)$ of Theorem \ref{theorem2} follows immediately from the above construction. Part $(2)$ follows since , for any simplicial set $X$ and vertices $x,y \in X_0$, we have a natural isomorphism of dg coalgebras 
$$\mathbf{\Omega}( \widetilde{\mathbf{C}}_*(X))(x,y) \cong C^{\square}_*( \mathcal{R}_! ( \Pi (X)(x,y)) ).$$
Note that the set-like elements of the dg coalgebra $C^{\square}_*(K)$ for any $K \in \cat{cSet}$ are exactly the $0$-cubes in $K$. The $0$-cubes in $\mathcal{R}_!(\Pi(X)(x,y)) \in \cat{cSet}$ are all necklaces of $1$-simplices in $X$ connecting $x$ and $y$; these are precisely the elements of the set $F (\mathcal{Q}(X))(x,y) $ of morphisms from $x$ to $y$ in the free category generated by the underlying quiver of $X$. 

We prove part $(3)$. It follows from Proposition \ref{L1andL} and Corollary \ref{corthm1} that the maps 
$$\widehat{\mathfrak{C}}(X) \xrightarrow{\mu_X} \iota \mathcal{L} \mathfrak{C}(X) \xrightarrow{\iota(\mathcal{L}(Sz_X))} \mathbb{G}(X)$$
are weak equivalences of simplicial categories. Since the simplicial chains functor sends weak homotopy equivalences to quasi-isomorphisms, the induced map
$$\mathcal{C}^{\Delta}_*(\iota(\mathcal{L}(Sz_X)) \circ \mu_X): \mathcal{C}^{\Delta}_*(\widehat{\mathfrak{C}}(X)) \to \mathcal{C}^{\Delta}_*(\mathbb{G}(X))$$
is a quasi-equivalence in $\cat{Cat}_{\cat{dgCoalg}_R^{\geq 0}}.$

We now construct a natural quasi-equivalence
$$\mathbb{T}_X: \widehat{\mathbf{\Omega}}(\widetilde{\mathbf{C}}_*(X)) \to \mathcal{C}^{\Delta}_*(\widehat{\mathfrak{C}}(X)).$$ First recall that for any $K \in \cat{cSet}$ we have a natural quasi-isomorphism of dg coalgebras 
$$EZ^{\square}_K: C^{\square}_*(K) \to C^{\Delta}_*(\mathcal{T}(K))$$
where $\mathcal{T}: \cat{cSet} \to \cat{sSet}$ is the triangulation functor. The map $EZ^{\square}_K$ is induced by the classical Eilenberg-Zilber shuffle map of dg coalgebras
$$EZ: C^{\square}_*(\square^{n})\cong C^{\Delta}_*(\Delta^1)^{\otimes n} \to C^{\Delta}_*((\Delta^{1})^{\times n})$$
after using the identifications
$C_*^{\square}(K)\cong \underset{\square^n \to K}{\text{colim}} C_*^{\square}(\square^n)$ and $C^{\Delta}_*(\mathcal{T}(K)) \cong \underset{\square^n \to K}{\text{colim}} C_*^{\Delta}((\Delta^1)^{\times n})$. The fact that $EZ^{\square}_K$ is a quasi-isomorphism follows from a standard acyclic models argument. The fact that $EZ^{\square}_K$ is a map of dg coalgebras follows from 17.6 in \cite{EilMoo66}, which says that the classical Eilenberg-Zilber shuffle map preserves coalgebra structures. Note that $EZ^{\square}_K$ sends an $n$-cube in $C_n^{\square}(K)$ to a signed sum of $n!$ $n$-simplices in $C_*^{\Delta}(\mathcal{T}(K))$ each labeled by a permutation of the set $\{1,2,...,n\}$.  

By applying this construction at the level of morphisms, we obtain an induced natural quasi-equivalence
$$\mathcal{EZ}^{\square}_{\cat{C}}: \mathcal{C}^{\square}_*(\cat{C}) \to \mathcal{C}^{\Delta}_* (\mathfrak{T} (\cat{C}))$$ for any $\cat{C} \in \cat{Cat}_{\cat{cSet}}.$

Applying the above discussion to $\cat{C}= \mathfrak{R} (\Pi ( X))$ together with Proposition \ref{factorC} and the commutativity of the bottom square of \ref{diagram1}, we obtain a natural quasi-equivalence
\begin{equation}\label{EZcategorical}
\mathbf{\Omega}(\widetilde{\mathbf{C}}_*(X))\cong \mathcal{C}^{nec}_* (\Pi(X)) \xrightarrow{\mathcal{EZ}^{\square}_{\mathfrak{R} (\Pi ( X))}} \mathcal{C}^{\Delta}_*(\mathfrak{T} (\mathfrak{R} (\Pi ( X))))\cong \mathcal{C}^{\Delta}_*(\mathfrak{C}(X)).
\end{equation}

Finally, the quasi-equivalence \ref{EZcategorical} induces a quasi-equivalence $\mathbb{T}_X: \widehat{\mathbf{\Omega}}(\widetilde{\mathbf{C}}_*(X)) \to \mathcal{C}^{\Delta}_*(\widehat{\mathfrak{C}}(X))$
after localizing at the set-like elements. This follows since
$$\widehat{\mathbf{\Omega}}(\widetilde{\mathbf{C}}_*(X))= \mathbf{\Omega}(\widetilde{\mathbf{C}}_*(X))[ \mathcal{S}(\mathbf{\Omega}(\widetilde{\mathbf{C}}_*(X)))^{-1}],$$

$$\mathcal{C}^{\Delta}_*(\widehat{\mathfrak{C}}(X))\cong \mathcal{C}^{\Delta}_*(\mathfrak{C}(X))[\mathcal{S}(\mathcal{C}^{\Delta}_*(\mathfrak{C}(X))^{-1}],$$ and
$\mathcal{EZ}^{\square}_{\mathfrak{R} (\Pi ( X))}$ induces an isomorphism of categories

$$\mathcal{S}(\mathbf{\Omega}(\widetilde{\mathbf{C}}_*(X))) \xrightarrow{\cong} \mathcal{S}(\mathcal{C}^{\Delta}_*(\mathfrak{C}(X)).$$
\subsection{The extended cobar construction as a model for the path category}
We now state some consequences of Theorems \ref{theorem1} and \ref{theorem2}. The first is that the extended cobar construction $$\widehat{\mathbf{\Omega}}: \cat{B_{\infty}Coalg}_R \to \cat{Cat}_{\cat{dgCoalg}_R^{\geq 0}}$$ yields a model for the path category when applied to the $B_{\infty}$-coalgebra of normalized chains. This is Theorem \ref{algmodelforpathcat}, which we restate in the following.
\begin{Cor}
For any simplicial set $X \in \cat{sSet}$, the dg coalgebra enriched categories $\widehat{\mathbf{\Omega}}(\widetilde{\mathbf{C}}_*(X))$ and $\mathcal{C}_*^{\Delta}(\mathbb{P}(X))$ are naturally quasi-equivalent.
\end{Cor}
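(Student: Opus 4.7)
The plan is to splice together Theorem \ref{theorem2}(3), which supplies a natural zigzag of quasi-equivalences from $\widehat{\mathbf{\Omega}}(\widetilde{\mathbf{C}}_*(X))$ to $\mathcal{C}^{\Delta}_*(\mathbb{G}(X))$, with the zigzag of natural Bergner weak equivalences $\mathbb{G}(X) \simeq \mathbb{P}(X)$ recorded in Theorem \ref{equivalencesofsimplicialcategories}, and then to transport the second zigzag to the chain level by applying the induced functor $\mathcal{C}^{\Delta}_*: \cat{Cat}_{\cat{sSet}} \to \cat{Cat}_{\cat{dgCoalg}_R^{\geq 0}}$. Once these two pieces are connected, concatenation gives the desired natural quasi-equivalence.

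The one substantive ingredient that is not already established in the excerpt is the compatibility lemma: $\mathcal{C}^{\Delta}_*$ sends Bergner weak equivalences of simplicial categories to quasi-equivalences in $\cat{Cat}_{\cat{dgCoalg}_R^{\geq 0}}$. I would prove this by unpacking the two conditions separately. On each mapping space, a Bergner weak equivalence $f: \cat{C} \to \cat{D}$ restricts to a weak homotopy equivalence of simplicial sets, so the normalized chains functor $C^{\Delta}_*$ produces a quasi-isomorphism of dg $R$-modules $\mathcal{C}^{\Delta}_*(\cat{C})(x,y) \to \mathcal{C}^{\Delta}_*(\cat{D})(f(x),f(y))$. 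For essential surjectivity of the induced functor $H_0(\mathcal{C}^{\Delta}_*(\cat{C})) \to H_0(\mathcal{C}^{\Delta}_*(\cat{D}))$, use the natural identification $H_0(C^{\Delta}_*(Y)) \cong R[\pi_0(Y)]$ for any simplicial set $Y$; this shows that essential surjectivity of $f$ at the level of homotopy categories of simplicial categories (i.e., on $\pi_0$ of the mapping spaces) translates directly into essential surjectivity at the $H_0$ level, so $\mathcal{C}^{\Delta}_*(f)$ is a quasi-equivalence as defined earlier.

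With this lemma in hand, the proof reduces to three lines: (i) Theorem \ref{theorem2}(3) yields $\widehat{\mathbf{\Omega}}(\widetilde{\mathbf{C}}_*(X)) \xrightarrow{\mathbb{T}_X} \mathcal{C}^{\Delta}_*(\widehat{\mathfrak{C}}(X)) \to \mathcal{C}^{\Delta}_*(\mathbb{G}(X))$; (ii) applying the lemma to each arrow of the zigzag in Theorem \ref{equivalencesofsimplicialcategories} linking $\mathbb{G}(X)$ and $\mathbb{P}(X)$ produces a zigzag of quasi-equivalences $\mathcal{C}^{\Delta}_*(\mathbb{G}(X)) \simeq \mathcal{C}^{\Delta}_*(\mathbb{P}(X))$; (iii) concatenate the two zigzags and invoke naturality. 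The main obstacle is purely conceptual rather than technical: one must be precise about what ``natural quasi-equivalence'' means as a relation on $\cat{Cat}_{\cat{dgCoalg}_R^{\geq 0}}$ and check that the resulting zigzag is natural in $X$, which is immediate since both Theorem \ref{theorem2}(3) and Theorem \ref{equivalencesofsimplicialcategories} produce their equivalences functorially in $X$.
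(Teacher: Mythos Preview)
Your proposal is correct and follows essentially the same approach as the paper: invoke Theorem \ref{theorem2}(3) for the first leg, then apply $\mathcal{C}^{\Delta}_*$ to the zigzag of Bergner weak equivalences from Theorem \ref{equivalencesofsimplicialcategories} to obtain the second leg. The paper's proof is terser---it simply asserts that the weak equivalence $\mathbb{G}(X)\simeq\mathbb{P}(X)$ ``implies'' the quasi-equivalence after applying $\mathcal{C}^{\Delta}_*$---whereas you spell out the compatibility lemma and its justification, which is a welcome clarification but not a different argument.
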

\begin{proof}
As we have seen in the previous section, we have quasi-equivalence
$$\widehat{\mathbf{\Omega}}(\widetilde{\mathbf{C}}_*(X)) \xrightarrow{\simeq} \mathcal{C}^{\Delta}_*(\mathbb{G}(X))$$
in $\cat{Cat}_{\cat{dgCoalg}_R^{\geq 0}}$. By Theorem \ref{equivalencesofsimplicialcategories}, the simplicial categories $\mathbb{G}(X)$ and $\mathbb{P}(X)$ are naturally weakly equivalent. This implies that $\mathcal{C}^{\Delta}_*(\mathbb{G}(X))$ and $\mathcal{C}^{\Delta}_*(\mathbb{P}(X))$ are naturally quasi-equivalent as objects in $\cat{Cat}_{\cat{dgCoalg}_R^{\geq 0}}$.
\end{proof}
When $X$ is a reduced simplicial set then $\mathbf{\Omega}(\widetilde{\mathbf{C}}_*(X))$ has a single object, so it may be considered as a dg bialgebra. In fact, there is an isomorphism of underlying dg algebras
$$\varphi: \Omega(C^{\Delta}_*(X)) \xrightarrow{\cong} \mathbf{\Omega}(\widetilde{\mathbf{C}}_*(X)),$$
where the left hand side is now the classical cobar construction of the connected dg coalgebra of normalized chains on $X$. The map $\varphi$ is determined by defining $\varphi(s^{-1}\sigma)= s^{-1}\sigma - 1_R$ if $\sigma \in X_1$ and $\varphi(s^{-1}\sigma)= s^{-1}\sigma$ if $\sigma \in X_n$ for $n>1.$ Then $\varphi$ is extended to monomials of arbitrary length in $\Omega(C^{\Delta}_*(X))$ as an algebra map. This induces a natural isomorphism of dg algebras after localizing
$$\varphi: \widehat{\Omega}(C^{\Delta}_*(X)) \xrightarrow{\cong} \widehat{\mathbf{\Omega}}(\widetilde{\mathbf{C}}_*(X)),$$
where the left hand side is now the extended cobar construction as defined in section 1.2 in \cite{hess2010loop}. Since  $\widehat{\mathbf{\Omega}}(\widetilde{\mathbf{C}}_*(X))$ has a dg bialgebra structure when equipped with the Serre coproduct, one obtains a natural dg bialgebra structure on $\widehat{\Omega}(C^{\Delta}_*(X))$ via the isomorphism $\varphi.$
The dg coalgebra structure on $\Omega(C^{\Delta}_*(X))$ is induced by the homotopy Gerstenhaber coalgebra structure on $C^{\Delta}_*(X)$, as explained in Appendix A of \cite{franz2021szczarba}. This coproduct on the cobar construction was also studied by Baues in \cite{Bau81}.

Hess and Tonks also define a quasi-isomorphism of dg algebras
$$\phi: \widehat{\Omega}(C^{\Delta}_*(X)) \to C^{\Delta}_*(G^{Kan}(X)),$$
in terms of the Szczarba operators, see Theorem 7 of \cite{hess2010loop}. The map $\phi$ is induced by Szczarba's twisting cochain $t: C^{\Delta}_*(X) \to C^{\Delta}_*(G^{Kan}(X))$ as recalled in 5.3 of \cite{franz2021szczarba}. As a consequence of Theorem \ref{theorem2} we obtain that Hess and Tonk's map $\phi$ is comultiplicative, which is the main theorem in \cite{franz2021szczarba}. 

\begin{Cor} For any $0$-reduced simplicial set $X$, the composition  $$\widehat{\Omega}(C^{\Delta}_*(X)) \xrightarrow{\varphi} \widehat{\mathbf{\Omega}}(\widetilde{\mathbf{C}}_*(X)))  \xrightarrow{\mathbb{T}_X} C_*(\widehat{\mathfrak{C}}(X)) \xrightarrow{C_*(\iota (\mathcal{L} (Sz_X)) \circ \mu_X)}C^{\Delta}_*(G^{Kan}(X))$$ is a quasi-isomorphism of dg bialgebras. Furthermore, we have
$$C_*(\iota (\mathcal{L} (Sz_X)) \circ \mu_X) \circ \mathbb{T}_X \circ \varphi = \phi.$$
\end{Cor}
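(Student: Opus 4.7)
The plan splits naturally into two parts. For the first assertion, that the composition is a quasi-isomorphism of dg bialgebras, I would argue factor by factor. The map $\varphi$ is an isomorphism of dg algebras by construction, and the coproduct on $\widehat{\Omega}(C^{\Delta}_*(X))$ is defined precisely by transporting the Serre coproduct through $\varphi$, so $\varphi$ is tautologically an isomorphism of dg bialgebras. The map $\mathbb{T}_X$ is a quasi-equivalence in $\cat{Cat}_{\cat{dgCoalg}_R^{\geq 0}}$ by Theorem \ref{theorem2}(3); since $X$ is $0$-reduced, both source and target have a single object, so the quasi-equivalence specializes to a quasi-isomorphism of dg bialgebras. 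Finally, the simplicial functor $\iota(\mathcal{L}(Sz_X))\circ\mu_X\colon \widehat{\mathfrak{C}}(X)\to\mathbb{G}(X)$ is a weak equivalence of simplicial categories by Proposition \ref{L1andL} and Corollary \ref{corthm1}, and $\mathcal{C}^{\Delta}_*$ carries it to a quasi-equivalence in $\cat{Cat}_{\cat{dgCoalg}_R^{\geq 0}}$; restricting to the one-object case again yields a quasi-isomorphism of dg bialgebras.

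For the second assertion, both $\phi$ and the composition are morphisms of dg algebras, so it suffices to verify agreement on algebra generators $s^{-1}\sigma$ for each non-degenerate simplex $\sigma$ of $X$. I would trace the generator through the three factors. By definition of $\varphi$, one reduces to computing the image of $s^{-1}\sigma$ in $\mathbf{\Omega}(\widetilde{\mathbf{C}}_*(X))$, which via the identification $\mathbf{\Omega}(\widetilde{\mathbf{C}}_*(X))\cong \mathcal{C}^{nec}_*(\Pi(X))$ of Proposition \ref{cobarandnecklaces} corresponds to the top-dimensional cube associated to the single-bead necklace $\sigma\colon\Delta^n\to X$. The map $\mathbb{T}_X$ was built from the monoidal Eilenberg--Zilber shuffle $\mathcal{EZ}^{\square}$, which sends this $n$-cube to the signed sum of all $n!$ non-degenerate top-dimensional simplices of $\mathfrak{C}(\Delta^n)(0,n)$, indexed by the bijection $\alpha$ of Section \ref{necklaces} with the symmetric group $S_{0,n}$. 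Applying $C^{\Delta}_*(\iota(\mathcal{L}(Sz_X))\circ\mu_X)$ then uses Construction \ref{Sz construction} to replace each such simplex by the corresponding word $D^n_{n-1,\gamma(\sigma')}g_n\cdots D^n_{0,\gamma(\sigma')}g_1$ in $G^{Kan}(X)$, after passing through $\mathcal{L}$ which inverts all $1$-simplices.

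Comparing the resulting signed sum of Szczarba-operator expressions to the explicit formula for Szczarba's twisting cochain $t$ recalled in Section 5.3 of \cite{franz2021szczarba} (and used to define $\phi$ in Section 1.4 of \cite{hess2010loop}), the two expressions are identical. This combinatorial matching was essentially established already in the proof of Proposition \ref{prop Sz map of cosimplicial simplicial categories}, which states precisely that the natural transformation $Sz$ encodes Szczarba's operators on top-dimensional non-degenerate simplices of $\mathfrak{C}(\Delta^n)(0,n)$. One thus obtains the identity $C_*(\iota(\mathcal{L}(Sz_X))\circ\mu_X)\circ\mathbb{T}_X\circ\varphi=\phi$ on each generator $s^{-1}\sigma$, and hence by multiplicativity on all of $\widehat{\Omega}(C^{\Delta}_*(X))$.

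The main obstacle, as I see it, is careful bookkeeping of signs in the Eilenberg--Zilber permutation expansion and matching them term-by-term against Szczarba's sign convention. Everything else is functoriality together with a translation of already-proved results; the combinatorial heart is the permutation-indexed shuffle in $\mathcal{EZ}^{\square}$ together with the observation that Szczarba's operators $D^n_{k,(i_1,\ldots,i_{n-1})}$ are indexed by exactly the data $\gamma=\beta\circ\alpha$ parametrizing non-degenerate top-dimensional simplices of $\mathfrak{C}(\Delta^n)(0,n)$.
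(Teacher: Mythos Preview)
Your proposal is correct and follows essentially the same route as the paper: invoke Theorem \ref{theorem2} for the quasi-isomorphism claim (the paper does this in one line, you spell out each factor), and verify the identity with $\phi$ by tracing algebra generators through the Eilenberg--Zilber shuffle and the Szczarba operators. The only detail you glide over that the paper makes explicit is the degree-$1$ correction $\varphi(s^{-1}\sigma)=s^{-1}\sigma-1_R$ for $\sigma\in X_1$, which is needed to match $\phi$ in the bottom dimension; be sure to account for it when you carry out the check.
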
 
\begin{proof}
It follows from Theorem \ref{theorem2} that the composition of maps is a quasi-isomorphism of dg bialgebras. The fact that this composition coincides with $\phi$ follows by unraveling the formula explicitly. More precisely, on an element $\sigma \in X_{n+1}$, considered as a necklace of length $1$ and dimension $n$, $(C_*(\iota (\mathcal{L} (Sz_X)) \circ \mu_X) \circ \mathbb{T}_X \circ \varphi)(\sigma)$ coincides with Szczarba's twisting cochain applied to $\sigma$: it is given by applying the Szczarba map to the signed sum of $n!$ $n$-simplices each labeled by the permutation corresponding to an $n$-simplex in $(\Delta^1)^{\times n}$ using $\varphi(\sigma)=\sigma -1_R$ if $n=0.$
\end{proof}
\appendix
\section{Model Structures}\label{appendixmodel}

In this appendix we collect the various model structures used throughout this paper. If $\cat{C}$ is a model category, we will denote the underlying hom set between two objects $X,Y \in \cat{C}$ by $\cat{C}(X,Y)$, and the set of homotopy classes of maps by $\cat{C}[X,Y]$.

\subsection{The Kan-Quillen model structure on simplicial sets}
A simplicial set $K$ is a \textit{Kan complex} if any horn $f: \Lambda^n_k \to K$ can be extended to $\tilde{f}: \Delta^n \to K$ for $0\leq k \leq n$. Given any Kan complex $K$ denote by $\cat{sSet}_{KQ}[X,K]$ the set of simplicial homotopy classes of maps from $X$ to $K$. 

\begin{Thm} There exists a proper and cofibrantly generated model structure on the category of simplicial sets in which
\begin{enumerate}
    \item a map $f: X \to Y$ is a weak equivalence if for every Kan complex $K$ the induced map of sets of simplicial homotopy classes of maps $\cat{sSet}_{KQ}[Y,K] \to \cat{sSet}_{KQ}[X,K]$ is an isomorphism, 
    \item the cofibrations are the monomorphisms, and
    \item the fibration are the Kan fibrations i.e. maps $f: X \to Y$ with the right lifting property with respect to horn inclusions $\Lambda^{n}_k \to \Delta^n$ for all $n \geq 1$ and $0 \leq k \leq n.$
\end{enumerate}
\end{Thm}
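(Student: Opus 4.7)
The plan is to follow Quillen's original argument, organized around cofibrant generation. First I would identify the generating sets $I = \{\partial\Delta^n \hookrightarrow \Delta^n\}_{n \geq 0}$ and $J = \{\Lambda^n_k \hookrightarrow \Delta^n\}_{n \geq 1,\, 0 \leq k \leq n}$. Both consist of maps between finite (hence small) simplicial sets, so Quillen's small object argument yields functorial factorizations as $I$-cell followed by $I$-injective, and as $J$-cell followed by $J$-injective. An elementary argument via the skeletal filtration shows that the retract closure of relative $I$-cell complexes is exactly the class of monomorphisms, so cofibrations coincide with monomorphisms as required. Fibrations are $J$-injective by definition, and trivial fibrations are $I$-injective.

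Next I would verify that the weak equivalences of the statement satisfy the 2-out-of-3 property and are closed under retracts; both are immediate from the definition via bijections on $\cat{sSet}_{KQ}[-, K]$ for Kan complexes $K$. By the small object argument, the nontrivial axioms then reduce to two claims: (a) every $I$-injective is a weak equivalence, and (b) every cofibration which is a weak equivalence is a retract of a relative $J$-cell complex (equivalently, has the left lifting property against every Kan fibration).

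Claim (a) follows from the fact that any Kan fibration with contractible fibers is an $I$-injective, applied to the path-space fibration. Claim (b) is the main obstacle and is the deepest part of the proof. The standard route, due to Kan and Quillen, proceeds via minimal fibration theory: every Kan fibration is fiber-homotopy equivalent over its base to a minimal Kan fibration, and a minimal Kan fibration which is a weak equivalence is necessarily an isomorphism. From this one extracts that any Kan fibration which is a weak equivalence is an $I$-injective, and by adjointness one deduces (b). A parallel ingredient is Kan's theorem on anodyne extensions: the saturation of $J$ under pushouts and transfinite composition is closed under the pushout-product with arbitrary monomorphisms, so anodyne extensions are trivial cofibrations and compose well with cofibrations. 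Combining these yields the lifting axiom for cofibrations against fibrations that are weak equivalences.

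Finally, cofibrant generation is automatic from the choice of $I$ and $J$. Left properness is immediate since every object is cofibrant and pushouts of anodyne extensions along monomorphisms are anodyne extensions. Right properness—the statement that pullbacks of weak equivalences along Kan fibrations remain weak equivalences—is again a classical consequence of minimal fibration theory: replacing the Kan fibration by a minimal model and using that minimal fibrations are determined by their fibers reduces the claim to the fact that weak equivalences of fibers induce weak equivalences of total spaces. The hardest, most technical step in the whole proof is the minimal fibration machinery used for (b) and for right properness; all other steps are formal consequences of the small object argument together with the skeletal filtration of simplicial sets.
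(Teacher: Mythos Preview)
The paper does not prove this theorem at all: it appears in the appendix as a recollection of the classical Kan--Quillen model structure, stated without proof and used only as background. So there is no ``paper's own proof'' to compare against. Your sketch is the standard Quillen argument via cofibrant generation and minimal fibrations, and its overall architecture is correct.

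One point deserves tightening. Your justification of claim (a) --- that every $I$-injective map is a weak equivalence --- is muddled as written. Saying it ``follows from the fact that any Kan fibration with contractible fibers is an $I$-injective, applied to the path-space fibration'' is backwards: you are trying to deduce a property \emph{of} $I$-injectives, not to recognize one. The clean argument is direct: an $I$-injective $p: X \to Y$ has the right lifting property against $\emptyset \hookrightarrow Y$, giving a section $s$, and lifting against the inclusion $(X \times \partial\Delta^1) \cup (X \times \{0\}) \hookrightarrow X \times \Delta^1$ produces a simplicial homotopy from $sp$ to $\mathrm{id}_X$. Thus $p$ is a simplicial homotopy equivalence, and such maps induce bijections on $\mathsf{sSet}_{KQ}[-,K]$ for every Kan complex $K$. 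With that correction, the rest of your outline (anodyne extensions, minimal fibrations for claim (b) and for right properness, left properness from universal cofibrancy) is the textbook route and is fine.
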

We call the above model category structure the \textit{Kan-Quillen model structure} on simplicial sets. We denote it by $\cat{sSet}_{KQ}$. All objects in $\cat{sSet}_{KQ}$ are cofibrant. The fibrant objects in $\cat{sSet}_{KQ}$ are precisely the Kan complexes.  We will call the weak equivalences in $\cat{sSet}_{KQ}$ simply  \textit{weak homotopy equivalences of simplicial sets}. 

The Kan-Quillen model category is a model for the homotopy theory of homotopy types.

\begin{Thm} The adjunction $$| - |: \cat{sSet}_{KQ} \rightleftarrows \cat{Top}_Q: \text{Sing}$$
defines a Quillen equivalence of model categories, where $\cat{Top}_Q$ denotes the classical model structure on topological spaces.
\end{Thm}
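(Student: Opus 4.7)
The plan is to verify the two standard criteria: first that $(|-|,\mathrm{Sing})$ is a Quillen adjunction, and second that it induces an equivalence on homotopy categories by checking the derived unit and counit. Since every object of $\cat{sSet}_{KQ}$ is cofibrant and every object of $\cat{Top}_Q$ is fibrant (for the classical Serre model structure), the derived unit and counit agree with the ordinary unit and counit, so the second step reduces to showing these two natural maps are weak equivalences pointwise.

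For the Quillen adjunction, I would show that $|-|$ sends the generating cofibrations $\partial\Delta^n \hookrightarrow \Delta^n$ to CW inclusions in $\cat{Top}_Q$, which are cofibrations, and sends the generating acyclic cofibrations $\Lambda^n_k \hookrightarrow \Delta^n$ to maps whose realizations are deformation retracts of $|\Delta^n|$, hence acyclic cofibrations. Equivalently, one can work on the right and verify that $\mathrm{Sing}$ preserves Serre fibrations and acyclic Serre fibrations; this again follows by adjunction from the fact that $|\Lambda^n_k|\hookrightarrow|\Delta^n|$ and $|\partial\Delta^n|\hookrightarrow|\Delta^n|$ have the respective lifting properties. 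A small check shows $\mathrm{Sing}\,Y$ is always a Kan complex by the same deformation retract argument.

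For the Quillen equivalence, the heart of the matter is to show that the counit $\epsilon_Y\colon |\mathrm{Sing}\,Y|\to Y$ is a weak homotopy equivalence for every topological space $Y$. The strategy I would follow is Quillen's: first reduce to the case of a CW complex by functorially approximating $Y$ by $|\mathrm{Sing}\,Y|$ and using that $\mathrm{Sing}$ detects weak equivalences; then for CW complexes, exhibit $\epsilon_Y$ as inducing an isomorphism on all homotopy groups by the classical identification of $\pi_n(\mathrm{Sing}\,Y,y) = \pi_n(Y,y)$ as Kan-complex simplicial homotopy classes of maps of pairs $(\Delta^n,\partial\Delta^n)\to(\mathrm{Sing}\,Y,y)$, which by adjunction match continuous homotopy classes of $(|\Delta^n|,|\partial\Delta^n|)\to(Y,y)$. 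The unit $\eta_X\colon X\to\mathrm{Sing}\,|X|$ then follows from the triangle identity $\mathrm{Sing}(\epsilon_{|X|})\circ\eta_{\mathrm{Sing}\,|X|}=\mathrm{id}$ together with $\mathrm{Sing}$ preserving and detecting weak equivalences: the former forces $\mathrm{Sing}(\epsilon_{|X|})$ to be a weak equivalence, hence $\eta_{\mathrm{Sing}\,|X|}$ is, and a separate two-out-of-three (applied to $\eta$ after realizing and comparing to $\epsilon$) yields the result for arbitrary $X$.

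The main obstacle is the counit claim $|\mathrm{Sing}\,Y|\xrightarrow{\simeq}Y$, which is the deep classical input. The cleanest implementation would cite Milnor's theorem together with the Giever--Hu identification of singular homotopy groups with combinatorial homotopy groups of the Kan complex $\mathrm{Sing}\,Y$; alternatively one proves it directly by observing that $|\mathrm{Sing}\,Y|$ is a CW complex mapping to $Y$ inducing a bijection on path components and, basepointwise, isomorphisms on all $\pi_n$, from which Whitehead-style arguments give the weak equivalence. All other verifications (preservation of (acyclic) cofibrations, Kan-ness of $\mathrm{Sing}\,Y$, the two-out-of-three deduction for the unit) are essentially formal once this input is in hand.
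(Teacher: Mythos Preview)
The paper does not actually prove this theorem: it is stated in the appendix as a classical background result with no accompanying proof or even an explicit citation. There is therefore no ``paper's own proof'' to compare your proposal against.

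That said, your outline is a reasonable sketch of the standard argument (essentially Quillen's, with Milnor's theorem on the counit as the main analytic input). One small wobble: your reduction for the unit is slightly tangled. The triangle identity you quote, $\mathrm{Sing}(\epsilon_{Y})\circ\eta_{\mathrm{Sing}\,Y}=\mathrm{id}$, together with the counit being a weak equivalence, gives that $\eta_{\mathrm{Sing}\,Y}$ is a weak equivalence for every $Y$; to get $\eta_X$ for arbitrary $X$ you should instead use the other triangle identity $\epsilon_{|X|}\circ |\eta_X|=\mathrm{id}_{|X|}$, which shows $|\eta_X|$ is a weak equivalence, and then invoke the fact that $|-|$ reflects weak equivalences (equivalently, that the Kan--Quillen weak equivalences are defined so that $f$ is one iff $|f|$ is). This is the clean route and avoids the somewhat circular-sounding ``$\mathrm{Sing}$ detects weak equivalences'' step.
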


\subsection{The Joyal model structure on simplicial sets}

A simplicial set $Q$ is a \textit{quasi-category} if any horn $f: \Lambda^n_k \to Q$ can be extended to $\tilde{f}: \Delta^n \to Q$ for $0<k<n$.

Consider the groupoid $\mathbb{J}^n = L[n]$, where $[n]$ is the category determined by the poset $\{ 0 \to 1 \to ... \to n \}$ and $L$ is the localization functor from categories to groupoids. We will abuse notation and write $\mathbb{J} = \mathbb{J}^1$. Let $N(\mathbb{J}^n)$ be the Kan complex obtained by applying the nerve functor $N$. Given a quasi-category $Q$ and a simplicial set $X$, two maps $f, g: X \to Q$ are said to be $\mathbb{J}$-\textit{homotopic} if there is a map $H: X \times N(\mathbb{J}) \to Q$ such that $H \circ i_0 =f$ and $H \circ i_1=g$, where $i_0, i_1: X \to X \times N(\mathbb{J})$ are the natural inclusions. The notion of $\mathbb{J}$-homotopy defines an equivalence relation on the set $\cat{sSet}(X,Q)$. Denote the set of equivalence classes by $\cat{sSet}_J[X,Q]$. 

 \begin{Thm} There exists a left proper and cofibrantly generated model structure on the category of simplicial sets in which
 \begin{enumerate}
     \item the weak equivalences are maps $f: X \to Y$ such that for every quasi-category $Q$, the induced map $\cat{sSet}_J[Y,Q] \to \cat{sSet}_J[X,Q]$ is an isomorphism of sets,
     \item the cofibrations are the monomorphisms, and
     \item the fibrant objects are the quasi-categories.
 \end{enumerate}
\end{Thm}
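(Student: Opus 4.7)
My plan is to deduce the existence of this model structure from Jeff Smith's recognition theorem for combinatorial model categories (e.g.\ Proposition A.2.6.13 in Lurie's \emph{Higher Topos Theory}). First I would take $I = \{\partial\Delta^n \hookrightarrow \Delta^n : n \geq 0\}$ as the generating cofibrations, so that the saturation is exactly the class of monomorphisms, giving (2) for free. I would then define $\mathcal{W}$ precisely as in clause (1). The $2$-out-of-$3$ and retract closure of $\mathcal{W}$ are immediate, since the defining condition asks for bijections on hom-sets. Accessibility of $\mathcal{W}$ follows because quasi-categories form an accessibly embedded full subcategory of $\cat{sSet}$ (they are cut out by lifting against the set of inner horn inclusions $\{\Lambda^n_k \hookrightarrow \Delta^n : 0<k<n\}$), and the functor $Q \mapsto \cat{sSet}_J[-,Q]$ on a fixed small collection of test objects commutes with filtered colimits.

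The main work is to exhibit a set $J$ of trivial cofibrations whose weak saturation coincides with $\mathrm{cof}(I) \cap \mathcal{W}$. I would take $J$ to consist of (i) the inner horn inclusions $\Lambda^n_k \hookrightarrow \Delta^n$ for $n \geq 2$ and $0 < k < n$, together with (ii) the pushout-products of boundary inclusions $\partial\Delta^n \hookrightarrow \Delta^n$ with the endpoint inclusion $\{0\} \hookrightarrow N(\mathbb{J})$, where $\mathbb{J} = L[1]$ is the free-living isomorphism. Membership of (i) in $\mathcal{W}$ is verified by lifting $\mathbb{J}$-homotopies through inner anodyne maps into any quasi-category; membership of (ii) is reduced to showing that $\{0\} \hookrightarrow N(\mathbb{J})$ is itself a $\mathbb{J}$-equivalence (which one checks by exhibiting an explicit deformation into any quasi-category $Q$, using that $N(\mathbb{J})$ represents isomorphisms in the homotopy category), combined with closure of $\mathcal{W} \cap \mathrm{cof}$ under pushout-products with cofibrations.

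The hard part is the converse identification: proving that any cofibration in $\mathcal{W}$ lies in the weak saturation of $J$, equivalently that the maps with the right lifting property against $J$ are precisely the fibrations of the would-be model structure. The technical heart of this step is to characterize the $J$-injective maps between quasi-categories as the isofibrations, and then to upgrade this to a characterization of fibrations with arbitrary (not necessarily fibrant) source by comparison with fibrant replacements built from inner anodyne extensions. Once this identification is in place, Smith's theorem furnishes the model structure; the fibrant objects are exactly the simplicial sets $Q$ for which $Q \to \Delta^0$ is a fibration, and unwinding the lifting property (the (ii)-lifts against $\Delta^0$ become automatic, being an isomorphism inversion statement in an already-quasi-category) yields that they are precisely the quasi-categories, establishing (3). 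Left properness is then a formal consequence of all objects being cofibrant and of the stability of $\mathcal{W}$ under pushout along monomorphisms, which one verifies directly by gluing $\mathbb{J}$-homotopies into an arbitrary quasi-category along the monomorphism.
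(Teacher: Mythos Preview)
The paper does not prove this statement: it appears in the appendix as a background result stated without proof or attribution, so there is no ``paper's proof'' to compare against. Your proposal is therefore being evaluated on its own merits.

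Your high-level strategy is the right one, and the choice of $I$, the description of $\mathcal{W}$, and the remarks on $2$-out-of-$3$ and on left properness (which is indeed automatic once all objects are cofibrant) are all fine. There are, however, two genuine gaps.

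First, the accessibility argument is too thin. That quasi-categories form an accessibly embedded subcategory does not by itself imply that the class of maps inducing bijections on $\mathbb{J}$-homotopy classes into every quasi-category is accessible; you would need to bound the test objects by a \emph{set} of quasi-categories and control how $\mathbb{J}$-homotopy classes interact with filtered colimits in the source variable. In practice one appeals to Cisinski's machinery or to an alternative characterization of $\mathcal{W}$ (e.g.\ via $\mathfrak{C}$ and Bergner equivalences, or via categorical equivalences of fibrant replacements), not to the raw definition you wrote down.

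Second, and more seriously, your proposed $J$ (inner horns together with the pushout-products of $\partial\Delta^n \hookrightarrow \Delta^n$ with $\{0\}\hookrightarrow N(\mathbb{J})$) is only a \emph{pseudo}-generating set: it is a classical fact that these maps detect the fibrations \emph{between quasi-categories}, but they do not detect all Joyal fibrations when the source is not fibrant. Your ``upgrade'' step---passing from fibrant to arbitrary source by comparing with inner-anodyne fibrant replacements---is exactly where this breaks down, and no explicit small generating set of Joyal trivial cofibrations is known. This is why Joyal, Lurie, and Cisinski each construct the model structure by routes that avoid exhibiting $J$: Cisinski's localizer formalism, or Lurie's transfer along the $(\mathfrak{C},\mathfrak{N})$ adjunction. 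If you want to stay with Smith's theorem, you should drop the attempt to name $J$ and instead verify directly that $\mathcal{W}\cap\mathrm{cof}(I)$ is closed under pushout and transfinite composition; your closing ``gluing $\mathbb{J}$-homotopies'' remark points in the right direction, but carrying it out is the actual content of the proof and is not as routine as you suggest.
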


We call the above model category structure the \textit{Joyal model structure} on simplicial sets. We denote it by $\cat{sSet}_{J}$. All objects in $\cat{sSet}_{J}$ are cofibrant.  We will call a weak equivalence in $\cat{sSet}_{J}$ a \textit{Joyal equivalence}. 

For any quasi-category $Q$ and simplicial set $X$, the \textit{derived mapping space} $\mathbf{R}\cat{sSet}_J(X,Q)$ may be defined as the simplicial set whose $n$-simplices are given by maps $X \times N(\mathbb{J}^n) \to Q$. 

A fundamental fact in the theory of quasi-categories, used repeatedly in this article, is the following.

\begin{Thm} [{\cite[Corollary 1.4]{joyal02}}]  \label{Kaniff} A quasi-category $Q$ is a Kan complex if and only if its homotopy category $\pi_0(\mathfrak{C}(Q))$ is a grupoid.
\end{Thm}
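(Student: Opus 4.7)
The forward direction is immediate: if $Q$ is a Kan complex, then for any $1$-simplex $f: x \to y$, the horn $\Lambda^2_0 \to Q$ whose restriction to $\Delta^{\{0,1\}}$ is $s_0(x)$ and whose restriction to $\Delta^{\{0,2\}}$ is $f$ admits a filler, and its missing face produces a left inverse to $f$ in $\pi_0(\mathfrak{C}(Q))$; the dual argument using $\Lambda^2_2$ yields a right inverse, so every morphism of $\pi_0(\mathfrak{C}(Q))$ is invertible.

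For the converse, assume $\pi_0(\mathfrak{C}(Q))$ is a groupoid. Since $Q$ is a quasi-category, inner horns $\Lambda^n_k \to Q$ with $0<k<n$ already lift to $\Delta^n$, so the problem reduces to lifting the outer horns $\Lambda^n_0 \to Q$ and $\Lambda^n_n \to Q$ for $n \geq 2$ (the case $n=1$ being handled by a degenerate edge). The pivot of the argument is the following lemma: in any quasi-category $Q$, an outer horn $\sigma : \Lambda^n_0 \to Q$ admits an extension to $\Delta^n$ provided the edge $\sigma|_{\Delta^{\{0,1\}}}$ represents an isomorphism in $\pi_0(\mathfrak{C}(Q))$ (and dually for $\Lambda^n_n$). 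Granting this lemma, the converse follows: under the groupoid hypothesis every $1$-simplex of $Q$ represents an isomorphism, so every outer horn lifts.

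I would prove the lemma by induction on $n$. For the base case $n=2$, the horn consists of an equivalence $e : x \to y$ together with an arbitrary edge $g : x \to z$; since $[e]$ is invertible in $\pi_0(\mathfrak{C}(Q))$, one picks a quasi-inverse $\bar e : y \to x$ and a $2$-simplex $\alpha$ with $d_2 \alpha = e$, $d_0 \alpha = \bar e$, $d_1 \alpha = s_0(x)$. An inner $\Lambda^2_1$-filling on $\bar e$ and $g$ produces a $2$-simplex $\beta$ whose $d_1$ face is some edge $h : y \to z$. Assembling $\alpha$, $\beta$, and the degenerate $2$-simplex $s_0(g)$ into an inner horn $\Lambda^3_2 \to Q$ on the vertex sequence $(x,y,x,z)$ and filling gives a $3$-simplex $\gamma$ whose face $d_2 \gamma$ is a $2$-simplex with edges $e$, $h$, $g$, i.e.\ the desired filler of the original horn. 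For $n>2$, an analogous bookkeeping promotes the outer horn to an inner horn in dimension $n+1$ (built from the given faces, degenerate simplices, and a simplex witnessing the quasi-inverse of $\sigma|_{\Delta^{\{0,1\}}}$) and then restricts to produce the missing face.

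The main obstacle is the base case $n=2$, which hinges on the identification of $\pi_0(\mathfrak{C}(Q))$ with Joyal's homotopy category of $Q$ (whose morphisms are edges of $Q$ modulo $2$-simplex homotopies): this ensures that the invertibility of $[e]$ is actually witnessed by an edge $\bar e$ together with $2$-simplices exhibiting $\bar e \circ e \simeq \mathrm{id}_x$ and $e \circ \bar e \simeq \mathrm{id}_y$, which is the concrete simplicial input needed for the horn-manipulation above. This identification can be extracted from the necklace description of $\mathfrak{C}(Q)$ given in Proposition \ref{dsnecklaces}, since the $0$-skeleton of $\mathfrak{C}(Q)(x,y)$ is freely generated by necklaces in $Q$ from $x$ to $y$, and homotopies between single-bead necklaces correspond exactly to $2$-simplices.
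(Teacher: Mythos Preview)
The paper does not supply a proof of this statement at all; it is quoted as an external result of Joyal and simply cited. So there is no in-paper argument against which to compare your proposal.

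That said, your overall strategy is the standard one and matches Joyal's: reduce to the \emph{special outer horn lemma} (an outer horn $\Lambda^n_0 \to Q$ fills whenever the edge on $\Delta^{\{0,1\}}$ is an equivalence, and dually for $\Lambda^n_n$). Your forward direction and your treatment of the base case $n=2$ are essentially correct.

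The gap is in the inductive step for $n>2$, which you describe as ``analogous bookkeeping'' that ``promotes the outer horn to an inner horn in dimension $n+1$''. This is precisely the nontrivial content of the theorem, and it does not go through by a single horn-promotion as you suggest: to assemble an inner horn $\Lambda^{n+1}_k \to Q$ (for some $0<k<n+1$) whose filling would yield the missing $d_0$-face of your original $\Lambda^n_0$, you would already need that $d_0$-face as one of the boundary pieces of the $(n+1)$-horn. The known arguments (Joyal's original, or Lurie's in HTT~1.2.4.3 and 2.1.2) instead proceed either by showing that special outer horn inclusions are inner anodyne after a suitable pushout-product analysis, or by passing through the theory of left/right fibrations and the contractibility of certain mapping spaces. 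None of these reduce to a one-step dimension shift; some global combinatorial input (a filtration of $\Delta^1 \times \Delta^n$ or an anodyne-extension lemma) is required. Your sketch would need to be replaced by one of these arguments to be complete.
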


We also record that $\cat{sSet}_{KQ}$ is a left Bousfield localization of $\cat{sSet}_J$. 

\begin{Prop}[{\cite[Propositions 5.9, 5.10]{riehl2008model}}\label{Bousfieldloc}] The Kan-Quillen model category $\cat{sSet}_{KQ}$ is a left Bousfield localization of the Joyal model category $\cat{sSet}_{J}$. In particular, a Joyal equivalence is a weak homotopy equivalence and every weak homotopy equivalence between Kan complexes is a Joyal equivalence. 
\end{Prop}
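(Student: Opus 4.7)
The plan is to verify the three structural conditions needed to exhibit $\cat{sSet}_{KQ}$ as a left Bousfield localization of $\cat{sSet}_{J}$: (i) the two model structures share the same underlying category and the same class of cofibrations, (ii) every Joyal equivalence is a weak homotopy equivalence, and (iii) the fibrant objects of $\cat{sSet}_{KQ}$ coincide with the fibrant objects of $\cat{sSet}_{J}$ that are local with respect to an explicit set of morphisms. Item (i) is automatic, since cofibrations in both model structures are the monomorphisms. The explicit set of morphisms in (iii) will be the singleton $S = \{ i\colon \Delta^1 \hookrightarrow N(\mathbb{J}) \}$ (or, for a cleaner inductive bookkeeping, the family of spine inclusions $\Delta^1 \vee \cdots \vee \Delta^1 \hookrightarrow N(\mathbb{J}^n)$); the existence of the Bousfield localization of $\cat{sSet}_{J}$ at $S$ will then follow from Smith's theorem, since $\cat{sSet}_{J}$ is combinatorial and left proper.

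For step (ii), the key observation is that on a Kan complex $K$ the notion of $\mathbb{J}$-homotopy and the notion of $\Delta^1$-homotopy define the same equivalence relation on $\cat{sSet}(X,K)$. Indeed, $\Delta^1 \to N(\mathbb{J})$ is a weak homotopy equivalence between cofibrant objects (both simplicial sets are contractible), so for any simplicial set $X$ the induced map $X\times \Delta^1 \to X\times N(\mathbb{J})$ is a weak homotopy equivalence between cofibrant objects, and hence induces a bijection
\[
\cat{sSet}_{KQ}[X\times N(\mathbb{J}), K] \xrightarrow{\cong} \cat{sSet}_{KQ}[X\times \Delta^1, K]
\]
when $K$ is fibrant in $\cat{sSet}_{KQ}$. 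Consequently, for any Kan complex $K$, the sets $\cat{sSet}_{J}[X,K]$ and $\cat{sSet}_{KQ}[X,K]$ coincide. If $f\colon X\to Y$ is a Joyal equivalence, then $\cat{sSet}_{J}[Y,K]\to \cat{sSet}_{J}[X,K]$ is a bijection for every quasi-category $K$, and in particular for every Kan complex, so $f$ is a weak homotopy equivalence. Granted (ii), the second sentence of the proposition follows: for the converse direction, any weak homotopy equivalence $f\colon X\to Y$ between Kan complexes is a Kan-Quillen weak equivalence between objects fibrant in both structures, and by the same bijection $\cat{sSet}_{J}[\,\cdot\, ,K] = \cat{sSet}_{KQ}[\,\cdot\, ,K]$ on Kan complexes, $f$ induces isomorphisms on $\cat{sSet}_{J}[-,K]$ for every Kan complex $K$; invoking Joyal's theorem, every quasi-category whose homotopy category is a groupoid is a Kan complex, which together with a standard argument (replacing a general quasi-category by one of its maximal Kan subcomplex or passage through $\mathfrak{N}$-style localization) yields the required bijection on all quasi-categories.

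For step (iii), we must show that a quasi-category $Q$ is $S$-local if and only if $Q$ is a Kan complex. The ``only if'' direction is immediate from the definition of $S$-local together with Joyal's theorem: if $\mathbf{R}\cat{sSet}_{J}(N(\mathbb{J}),Q) \to \mathbf{R}\cat{sSet}_{J}(\Delta^1,Q)$ is a Joyal equivalence, then every $1$-simplex of $Q$ extends to a map out of $N(\mathbb{J})$, i.e.\ it is invertible in the homotopy category $\pi_0(\mathfrak{C}(Q))$; this homotopy category is then a groupoid, so $Q$ is a Kan complex. For the ``if'' direction, when $Q$ is a Kan complex the inclusion $\Delta^1 \to N(\mathbb{J})$ induces a weak homotopy equivalence on the derived mapping spaces, because both derived mapping spaces are Kan complexes, and contractibility of $\Delta^1 \to N(\mathbb{J})$ together with the fact that $Q$ is fibrant in $\cat{sSet}_{KQ}$ gives the required equivalence. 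This identifies Bousfield-local fibrant objects with Kan complexes, and combined with (i) and (ii) pins down the localized model structure as $\cat{sSet}_{KQ}$.

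The principal technical obstacle is the precise matching of the two notions of ``local equivalence'' at the level of derived mapping spaces: one needs to know that $\mathbf{R}\cat{sSet}_{J}(-,Q)$ is a Kan complex for $Q$ a quasi-category, and that the canonical enrichment used in the Bousfield localization machinery is homotopically correct with respect to $\cat{sSet}_{J}$. Once that bookkeeping is in place, Joyal's characterization of Kan complexes among quasi-categories (cited in the excerpt) provides the final bridge. The remaining work is a formal check that the resulting localized model structure has cofibrations the monomorphisms and weak equivalences exactly the weak homotopy equivalences, which is a standard consequence of the Hirschhorn--Smith framework once (ii) and (iii) are established.
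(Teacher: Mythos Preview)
The paper does not actually prove this proposition; it is stated with a citation to \cite{riehl2008model} and no argument is given. The only additional content is Remark~\ref{locatp}, which records that the localization can be taken at the single map $p\colon \Delta^1\to\Delta^0$ rather than at your $i\colon\Delta^1\hookrightarrow N(\mathbb{J})$; these produce the same local objects (quasi-categories in which every $1$-simplex is an equivalence), so your choice is equally valid.

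Your plan is essentially correct, but the argument for the converse direction (a weak homotopy equivalence between Kan complexes is a Joyal equivalence) is muddled. The passage through ``maximal Kan subcomplex or $\mathfrak{N}$-style localization'' to extend bijectivity of $\cat{sSet}_J[-,K]$ from Kan targets $K$ to arbitrary quasi-categorical targets $Q$ is vague and unnecessary. You already have the right tool in hand: your coincidence of $\Delta^1$-homotopy and $\mathbb{J}$-homotopy when the \emph{target} is Kan. Given a weak homotopy equivalence $f\colon X\to Y$ between Kan complexes, Whitehead's theorem in $\cat{sSet}_{KQ}$ produces a $\Delta^1$-homotopy inverse $g$, so $gf$ and $\mathrm{id}_X$ are $\Delta^1$-homotopic as maps into the Kan complex $X$, hence $\mathbb{J}$-homotopic, and likewise for $fg$. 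Thus $f$ is a $\mathbb{J}$-homotopy equivalence, and therefore induces bijections on $\cat{sSet}_J[-,Q]$ for \emph{every} quasi-category $Q$ directly. Rewriting that paragraph along these lines removes the only genuine gap in your proposal.
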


\begin{Rem}\label{locatp} In fact, the model category $\cat{sSet}_{KQ}$ is the left Bousfield localization of $\cat{sSet}_J$ at the single morphism $p: \Delta^1 \to \Delta^0$. For any quasi-category $Q$, the map $p^*: \mathbf{R}\cat{sSet}_J(\Delta^0,Q) \to \mathbf{R}\cat{sSet}_J(\Delta^1,Q)$  between derived mapping spaces is a weak homotopy equivalence if and only if the homotopy category of $Q$ is a groupoid. This implies that the fibrant objects in the left Bousfield localization are precisely quasi-categories and the weak equivalences are the Joyal equivalences.
\end{Rem}

\subsection{The Dwyer-Kan model structure on simplicial groupoids}

\begin{Thm} [{\cite{dwyer_kan_1984}, \cite[Theorem 2.2]{bergner2008adding}}] \label{sgpdDK}
 There exists a cofibrantly generated model category structure on the category of simplicial groupoids such that
 \begin{enumerate}
    \item the weak equivalences are maps $F: C \to D$ such that
    \begin{enumerate}
        \item the induced functor $$\pi_0 C \xrightarrow{\pi_0 F} \pi_0 D$$ is an equivalence of categories, and
        \item for all objects $x,y \in C$, the induced morphism $$C(x,y) \xrightarrow{F} D(Fx, Fy)$$
        is a weak homotopy equivalence of simplicial sets,
    \end{enumerate}
    \item the fibrations are maps $f: C \to D$ such that
    \begin{enumerate}
        \item the induced functor $$\pi_0 C \xrightarrow{\pi_0 F} \pi_0 D$$ is an isofibration of categories, and 
        \item for all objects $x,y \in C$, the induced morphism $$C(x,y) \xrightarrow{F} D(Fx,Fy)$$ is a Kan fibration.
    \end{enumerate}
\end{enumerate}
\end{Thm}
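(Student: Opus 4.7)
The plan is to derive this classical result of Dwyer--Kan, in the refined form due to Bergner, by transferring a known model structure along an adjunction. The most natural choice is the adjunction $\mathcal{L}: \cat{Cat}_{\cat{sSet}} \rightleftarrows \cat{Gpd}_{\cat{sSet}}: \iota$ from Definition \ref{localizationofcategories}, starting from the Bergner model structure on $\cat{Cat}_{\cat{sSet}}$ (established separately, and used freely elsewhere in the paper). Since $\iota$ is fully faithful and preserves all limits and filtered colimits, this is an adjunction of the right shape for Kan's transfer principle.

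First, I would declare the candidate weak equivalences and fibrations to be precisely those maps $F$ of simplicial groupoids such that $\iota(F)$ is a Bergner weak equivalence, respectively a Bergner fibration; unwinding the explicit form of Bergner's definitions inside $\cat{Gpd}_{\cat{sSet}}$ recovers conditions (1)(a)--(b) and (2)(a)--(b) of the statement exactly. The generating (acyclic) cofibrations are then taken to be the images under $\mathcal{L}$ of the generating (acyclic) cofibrations of the Bergner structure, and the class of cofibrations is determined by left lifting against acyclic fibrations.

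Next, I would apply Kan's recognition theorem for cofibrantly generated model categories. Two hypotheses must be verified: smallness of the domains of the transferred generators (routine, since $\iota$ preserves filtered colimits, so smallness in $\cat{Cat}_{\cat{sSet}}$ transports to $\cat{Gpd}_{\cat{sSet}}$), and the pushout-stability condition that every relative cell complex built from transferred generating acyclic cofibrations is a candidate weak equivalence. The key inputs here are the gluing lemma for the Bergner model structure, the observation that pushouts in $\cat{Gpd}_{\cat{sSet}}$ are computed by first forming the pushout in $\cat{Cat}_{\cat{sSet}}$ and then applying $\mathcal{L}$, and Proposition~9.5 of \cite{dwyer1980simplicial} asserting that the unit $\mathcal{C} \to \iota\mathcal{L}(\mathcal{C})$ is a Bergner weak equivalence whenever the homotopy category of $\mathcal{C}$ is already a groupoid.

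The main obstacle is the pushout-stability step: one must control how iterated pushouts of transferred acyclic cofibrations affect both the homotopy category (keeping it an equivalence) and the mapping simplicial sets (keeping them weakly equivalent). The groupoid context makes this tractable because mapping simplicial sets in a simplicial groupoid between any two objects in the same component are, by right translation along any chosen morphism, isomorphic to a single simplicial group, so the problem reduces to a controlled analysis of pushouts in $\cat{sGrp}$ where the Kan model structure on simplicial groups is already at our disposal. Once the recognition theorem applies, cofibrant generation is immediate, and both left and right properness follow from the corresponding properties of the Bergner model structure together with the observation that $\mathcal{L}$ preserves pushouts while $\iota$ preserves pullbacks.
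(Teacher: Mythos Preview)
The paper does not give a proof of this theorem at all; it merely records the statement with citations to \cite{dwyer_kan_1984} and \cite[Theorem 2.2]{bergner2013adding} and moves on. So there is no argument in the paper to compare your proposal against. Your sketch is therefore not a reconstruction of the paper's reasoning but an independent attempt, and it is worth noting that your route is historically inverted: the Bergner model structure on $\cat{Cat}_{\cat{sSet}}$ postdates the Dwyer--Kan structure on $\cat{Gpd}_{\cat{sSet}}$ by two decades, and Bergner's own proof draws on Dwyer--Kan techniques rather than the other way around. That said, nothing prevents one from running the logic in the direction you propose once both structures are on the table.

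On the substance of your sketch, the one place that is not yet a proof is the acyclicity condition. Your reduction ``mapping simplicial sets in a connected simplicial groupoid are all isomorphic to a single simplicial group, so it suffices to analyse pushouts in $\cat{sGrp}$'' is too coarse: a pushout in $\cat{Gpd}_{\cat{sSet}}$ along a generating acyclic cofibration is not computed componentwise on mapping spaces, and the object set can change. The cleanest way to discharge the acyclicity hypothesis here is not the route you indicate but Quillen's path-object criterion: every simplicial groupoid is fibrant in the candidate structure (the isofibration and Kan-fibration conditions are automatic for the terminal map), and a functorial path object is supplied by cotensoring with $N\mathbb{J}$. This immediately yields that transfinite composites of pushouts of $\mathcal{L}(J)$ are weak equivalences, with no need to invoke \cite[Prop.~9.5]{dwyer1980simplicial} or to unpack pushouts by hand. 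With that replacement your transfer argument goes through; without it, the step you flag as ``the main obstacle'' remains an obstacle.
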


We call the above model category structure the \textit{Dwyer-Kan model structure} on simplicial groupoids. We denote it by $\cat{Gpd}_{\cat{sSet}, DK}$. Every object in $\cat{Gpd}_{\cat{sSet}, DK}$ is fibrant. This model category structure models the homotopy theory of homotopy types.

\begin{Thm}[{\cite[Theorem 3.3]{dwyer_kan_1984}}] \label{kan Qequivalence}
The adjunction \begin{equation}
G^{Kan}: \cat{sSet}_{KQ} \rightleftarrows \cat{Gpd}_{\cat{sSet},DK}: \conj{W}^{Kan}
\end{equation}
defines a Quillen equivalence. 
\end{Thm}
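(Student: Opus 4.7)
The plan is to verify the two conditions defining a Quillen equivalence in turn: that $(G^{Kan}, \overline{W}^{Kan})$ forms a Quillen adjunction, and that the derived unit is a weak equivalence on every object of $\cat{sSet}_{KQ}$. Since every object of $\cat{sSet}_{KQ}$ is cofibrant and every object of $\cat{Gpd}_{\cat{sSet}, DK}$ is fibrant, no fibrant/cofibrant replacements will need to intervene, which simplifies the bookkeeping considerably.

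For the Quillen adjunction, I would show the right adjoint $\overline{W}^{Kan}$ preserves fibrations and acyclic fibrations. Inspecting the explicit formula for $\overline{W}^{Kan}$, an $n$-simplex is a descending sequence of morphisms $(h_{n-1},\dots,h_0)$; a horn-filling problem $\Lambda^n_k \to \overline{W}^{Kan}(\mathcal{G})$ unwinds into a lifting problem for the mapping simplicial sets of $\mathcal{G}$ together with a lift against the underlying category of $\pi_0\mathcal{G}$ encoded by $h_0$. Thus a fibration in the sense of Theorem \ref{sgpdDK} (levelwise Kan fibrations on mapping spaces plus the isofibration condition on $\pi_0$) maps precisely to a Kan fibration of simplicial sets, and the acyclic case is analogous. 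This verifies that $\overline{W}^{Kan}$ is right Quillen.

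For the derived unit, I need to prove that $\eta_X : X \to \overline{W}^{Kan}(G^{Kan}(X))$ is a weak homotopy equivalence for every simplicial set $X$. I would first reduce to the case where $X$ is connected by observing that both $G^{Kan}$ and $\overline{W}^{Kan}$ commute, up to natural weak equivalence, with the decomposition into connected components (since $G^{Kan}(X \sqcup Y) \cong G^{Kan}(X) \sqcup G^{Kan}(Y)$ as simplicial groupoids and $\overline{W}^{Kan}$ respects coproducts of groupoids on connected input). For connected $X$, I would reduce further to a reduced model: choose a vertex $x \in X_0$, pass to a weakly equivalent $0$-reduced simplicial set $X'$ (e.g. via a functorial cofibrant model concentrated at $x$), and use that $G^{Kan}(X')$ is then Kan's simplicial loop group. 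The classical theorem of Kan, proved by comparing with the geometric based loop space $\Omega|X'|$ and its classifying space $B$, then gives $\overline{W}^{Kan}(G^{Kan}(X')) \simeq X'$ via $\eta_{X'}$. Alternatively, one may invoke May's classifying space functor $\mathcal{B}$ and the weak equivalence $\mathcal{B}\Omega|X| \simeq |X|$ already cited in the excerpt, applied to each path component.

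Finally, I would assemble: since $\overline{W}^{Kan}$ preserves all weak equivalences (which one checks from its explicit description, using that a Dwyer–Kan equivalence induces levelwise equivalences on mapping spaces and an equivalence on $\pi_0$), and the unit $\eta_X$ is a weak equivalence on all (cofibrant) $X$, the standard criterion for Quillen equivalences when every object is cofibrant on one side and fibrant on the other yields the conclusion. The main obstacle is the second step: the proof that $\eta_X$ is a weak equivalence is the genuine geometric content of the theorem, and going beyond the reduced case requires careful handling of basepoints and components. Everything else is either adjunction formalism or a direct inspection of $\overline{W}^{Kan}$.
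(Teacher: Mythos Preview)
The paper does not give its own proof of this statement: it is recorded in the appendix purely as a citation to \cite[Theorem 3.3]{dwyer_kan_1984}, with the following sentence merely noting that it generalizes Kan's one-object result. So there is nothing to compare against at the level of argument.

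That said, your outline is the standard shape of the Dwyer--Kan argument and is broadly sound as a plan. A few caveats worth flagging if you intend to flesh it out. First, the reduction from connected $X$ to a $0$-reduced model is not as innocent as you make it sound: you need a zig-zag of weak equivalences compatible with the unit, and the cleanest route is usually not ``choose a cofibrant reduced model'' but rather to work with the full simplicial groupoid directly and compare $\overline{W}^{Kan}G^{Kan}(X)$ with the diagonal of a bisimplicial set built from $X$, as Dwyer and Kan do. Second, your claim that $\overline{W}^{Kan}$ preserves \emph{all} weak equivalences (not just those between fibrant objects) is true but requires an argument; it is not immediate from the explicit formula, since the mapping spaces of a non-fibrant simplicial groupoid need not be Kan. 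Third, the verification that $\overline{W}^{Kan}$ is right Quillen via horn-filling is correct in spirit but the combinatorics of unwinding a $\Lambda^n_k$-filling problem into the claimed lifting problems for mapping spaces plus an isofibration lift is more delicate than a one-sentence inspection suggests. None of these is a fatal gap, but each would need real work to become a proof rather than a plan.
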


The above result is a generalization of a "one object" version originally due to Kan. More precisely, there are model structures on the category $\cat{sSet}^0$ and on the category of simplicial groups $\cat{sGrp}$ such that the adjunction $G^{Kan}: \cat{sSet}^0 \rightleftarrows \cat{sGrp}: \conj{W}^{Kan}$ becomes a Quillen equivalence. A complete proof of this result may be found in \cite{goerss2009simplicial}. We denote these model structures by $\cat{sSet}^0_{KQ}$ and $\cat{sGrp}_{KQ}.$

\subsection{The Bergner model structure on simplicial categories}

\begin{Thm}[{\cite{bergner2004model}, \cite[Theorem A.3.2.4]{lurie2006higher}}] \label{scatB} There exists a proper and cofibrantly generated model category structure on the category of simplicial categories such that 

\begin{enumerate}
    \item the weak equivalences are maps $F: C \to D$ such that
    \begin{enumerate}
        \item the induced functor $$\pi_0 C \xrightarrow{\pi_0 F} \pi_0 D$$ is an equivalence of categories
        \item for all objects $x,y \in C$, the induced morphism $$C(x,y) \xrightarrow{F} D(Fx, Fy)$$
        is a weak homotopy equivalence, and
    \end{enumerate}
    \item the fibrations are maps $F:C\to D$ such that
    \begin{enumerate}
        \item the induced functor $$\pi_0 C \xrightarrow{\pi_0 F} \pi_0 D$$
        is an isofibration of categories, and
         \item for all objects $x,y \in C$, the induced morphism
         $$C(x,y) \xrightarrow{F} D(Fx,Fy)$$ is a Kan fibration.
    \end{enumerate}
\end{enumerate}
\end{Thm}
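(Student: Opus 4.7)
The plan is to establish the existence of the model structure via Kan's recognition theorem for cofibrantly generated model categories, following Bergner's original argument (and as later recast by Lurie). Since the weak equivalences and fibrations are both specified, the cofibrations are forced to be the maps with the left lifting property against trivial fibrations; the work is in producing generating sets and verifying the lifting properties.

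First, I would identify the generating cofibrations and generating trivial cofibrations. The natural candidates for generating cofibrations are the maps $\mathcal{F}(j(\partial \Delta^n \hookrightarrow \Delta^n))$ obtained by freely adjoining morphisms, together with the inclusion $\emptyset \hookrightarrow \{*\}$ of the empty simplicial category into the terminal one (this is needed to account for the ``object'' part of the data). For generating trivial cofibrations one takes the maps $\mathcal{F}(j(\Lambda^n_k \hookrightarrow \Delta^n))$ for $0 \leq k \leq n$, $n\geq 1$, which handle the mapping-space direction, plus an additional family of ``interval'' maps $\{*\} \hookrightarrow H$, where $H$ is a contractible simplicial category with two objects and a specified ``invertible up to homotopy'' $0$-morphism between them. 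This latter family is what forces the $\pi_0$-part of the weak equivalence condition to be detected by lifting.

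Next, I would verify the two core hypotheses of Kan's recognition theorem:
\begin{enumerate}
    \item The class of weak equivalences satisfies the two-out-of-three property and is closed under retracts. This is essentially formal from the definition: weak homotopy equivalences of simplicial sets and equivalences of categories each satisfy it, and the conditions combine cleanly since the mapping-space condition is closed under composition with equivalences.
    \item The generating trivial cofibrations have the left lifting property against the specified fibrations, and conversely, maps with the right lifting property against the generating cofibrations are exactly the trivial fibrations (which coincide with the maps that are both fibrations and weak equivalences). The first of these is a direct unwinding of the definition of fibration; the second uses the fact that a Kan fibration of simplicial sets is a trivial fibration iff it has the right lifting against boundary inclusions.
\end{enumerate}
These together with cocompleteness of $\cat{Cat}_{\cat{sSet}}$ (both limits and colimits, the latter being the nontrivial one, computed as in the standard reference) and the smallness of the domains of the generating maps yield the model structure.

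The main obstacle, and the technical heart of Bergner's paper, is showing that pushouts and transfinite compositions of the generating trivial cofibrations remain weak equivalences. The pushouts along $\mathcal{F}(j(\Lambda^n_k \hookrightarrow \Delta^n))$ only change a single mapping space and so are controlled directly by the analogous statement in $\cat{sSet}_{KQ}$, but the pushouts along the ``invertibility'' maps $\{*\} \hookrightarrow H$ are more delicate: one must show that freely adjoining a homotopy-invertible morphism between two objects induces a Dwyer--Kan equivalence. This requires a careful combinatorial/categorical analysis of the resulting mapping spaces, which Bergner carries out by filtering the construction and reducing to a version of the statement that the hammock localization inverts weak equivalences up to homotopy. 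Once this step is in place, properness and the generation of the model structure follow from the fact that the generating cofibrations have cofibrant domains and the weak equivalences are preserved by pushout along the generating cofibrations, which one checks directly on mapping spaces.
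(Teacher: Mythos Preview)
The paper does not give its own proof of this theorem: it is stated in the appendix with citations to \cite{bergner2004model} and \cite[Theorem A.3.2.4]{lurie2006higher} and then used as a black box. So there is nothing to compare your proposal against in the paper itself.

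That said, your sketch is a faithful outline of Bergner's original argument via Kan's recognition theorem, with the correct identification of the generating sets and the correct emphasis on the hard step (pushouts along the ``interval'' inclusions $\{*\}\hookrightarrow H$). Two small points worth tightening if you want this to stand on its own: first, the family of $H$'s must range over a \emph{set} of isomorphism classes of such contractible two-object simplicial categories with cofibrant mapping spaces, not a single one, in order for the right lifting property to detect the isofibration condition on $\pi_0$; second, properness is not quite as immediate as you suggest---right properness follows from the fact that fibrations are levelwise Kan fibrations, but left properness is a separate argument (this is where Lurie's treatment in \cite{lurie2006higher} adds to Bergner's).
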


We call the above model category structure the \textit{Bergner model structure} on simplicial categories. We denote it by $\cat{Cat}_{\cat{sSet},B}$. This model category structure models the homotopy theory of infinity categories. The following result has been proven by Joyal and Lurie independently. 

\begin{Thm} \label{rigidification} The adjunction
$$\mathfrak{C}: \cat{sSet}_{J} \rightleftarrows \cat{Cat}_{\cat{sSet},B}: \mathfrak{N}$$ defines a Quillen equivalence. 
\end{Thm}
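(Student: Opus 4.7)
The plan is to follow the strategy of Lurie's proof in \cite{lurie2006higher}, which proceeds in two stages: first establishing that $(\mathfrak{C}, \mathfrak{N})$ is a Quillen adjunction, and then upgrading this to a Quillen equivalence by verifying that the derived unit and counit are weak equivalences.

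First I would verify that $(\mathfrak{C}, \mathfrak{N})$ is a Quillen adjunction. Since both model structures are cofibrantly generated, it suffices to check that $\mathfrak{C}$ sends the generating cofibrations (the boundary inclusions $\partial \Delta^n \hookrightarrow \Delta^n$) to Bergner cofibrations, and sends a suitable set of generating trivial cofibrations to Bergner weak equivalences. The former follows from the explicit description of $\mathfrak{C}(\Delta^n)$ in terms of the posets $P^n_{i,j}$ from Definition \ref{C def}: this presentation exhibits $\mathfrak{C}(\partial \Delta^n) \to \mathfrak{C}(\Delta^n)$ as an iterated pushout of generating cofibrations in $\cat{Cat}_{\cat{sSet},B}$. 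For the trivial cofibration side, the cleanest route is to verify instead that the right adjoint $\mathfrak{N}$ preserves fibrations and trivial fibrations; by the characterization of Bergner fibrations as locally Kan and locally isofibrant maps, this reduces to showing that for such a simplicial category $C$, the simplicial set $\mathfrak{N}(C)$ is a quasi-category and the map $\mathfrak{N}(C)\to \mathfrak{N}(D)$ has the appropriate horn-lifting property, which is a direct unwinding using the cosimplicial structure on $\mathfrak{C}(\Delta^\bullet)$.

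Next, to obtain a Quillen equivalence, I would establish that for every fibrant simplicial category $C$ the counit $\mathfrak{C}\mathfrak{N}(C) \to C$ is a Bergner equivalence, and that for every simplicial set $X$ the unit $X \to \mathfrak{N}\mathfrak{C}(X)$ is a Joyal equivalence. The key input is the comparison of mapping spaces: by Proposition \ref{dsnecklaces}, the mapping space $\mathfrak{C}(X)(x,y)$ is the colimit over the necklace category $(\cat{Nec} \downarrow X)_{x,y}$ of cubes $(\Delta^1)^{\times \text{dim}(N)}$. Following \cite{DuSp11}, this colimit is naturally weakly equivalent to a standard model for the homotopy mapping space $\Map^h_X(x,y)$ in any quasi-category $X$. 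Applied with $X = \mathfrak{N}(C)$, this identifies $\mathfrak{C}\mathfrak{N}(C)(x,y)$ with $C(x,y)$ up to weak equivalence, giving the counit statement. The unit statement then follows from a two-out-of-three argument: both $X$ and $\mathfrak{N}\mathfrak{C}(X)$ are quasi-categories with the same homotopy category and weakly equivalent mapping spaces by the same necklace comparison, hence the unit is a Joyal equivalence.

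The main obstacle is the necklace-based mapping space comparison of \cite{DuSp11}. Showing that the colimit of cubes indexed by necklaces in a quasi-category computes the correct homotopy mapping space requires a careful homotopical argument, carried out by filtering the category of necklaces by bead number and dimension and establishing contractibility of certain overcategories of necklace refinements. Once this identification is in place, the Quillen equivalence assertion follows from standard two-out-of-three manipulations applied at the level of mapping spaces and homotopy categories.
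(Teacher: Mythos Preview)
The paper does not prove this theorem; it is recorded in the appendix as a known result, attributed to Joyal and Lurie, with no argument supplied. Your sketch therefore goes well beyond what the paper offers, and it follows the standard route via \cite{lurie2006higher} and \cite{DuSp11}.

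The outline is largely sound, but the final step has a gap. For an arbitrary simplicial set $X$, neither $X$ nor $\mathfrak{N}\mathfrak{C}(X)$ need be a quasi-category: $\mathfrak{C}(X)$ is cofibrant but typically not Bergner-fibrant (its mapping spaces are cubes and not Kan complexes), so $\mathfrak{N}\mathfrak{C}(X)$ is not obviously a quasi-category, and $X$ is arbitrary. Your two-out-of-three argument for the unit therefore does not apply as written. The correct fix, as in \cite{DuSp11}, is to use the necklace comparison to show directly that $\mathfrak{C}$ takes Joyal equivalences to Bergner equivalences; once that is known, the unit statement reduces to the fibrant case (where $X$ is a quasi-category and $\mathfrak{C}(X)$ can be compared to a Bergner-fibrant replacement), or follows from the counit statement together with the fact that $\mathfrak{C}$ reflects weak equivalences.
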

In particular, a map of simplicial sets $f: X \to Y$ is a Joyal equivalence if and only if $\mathfrak{C}(f): \mathfrak{C}(X) \to \mathfrak{C}(Y)$ is a weak equivalence of simplicial categories. Proposition \ref{Bousfieldloc} implies that if $X$ and $Y$ are Kan complexes, then a map $f: X \to Y$ is a weak homotopy equivalence if and only if $\mathfrak{C}(f): \mathfrak{C}(X) \to \mathfrak{C}(Y)$ is a weak equivalence of simplicial categories. 

\subsection{A model structure on simplicial categories modelling homotopy types}

Although we do not use it explicitly in this article, some of our constructions may be explained in the following model category structure on $\cat{Cat}_{\cat{sSet}}$ modelling homotopy types.

\begin{Thm} The adjunction $\mathfrak{C}: \cat{sSet} \rightleftarrows \cat{Cat}_{\cat{sSet}}: \mathfrak{N}$ induces a Quillen equivalence between $$\mathfrak{C}: \cat{sSet}_{KQ} \rightleftarrows L_S\cat{Cat}_{\cat{sSet},B}: \mathfrak{N}$$ where $L_S\cat{Cat}_{\cat{sSet}, B}$ denotes the left Bousfield localization of the Bergner model structure at the singleton $S=\{ \mathfrak{C}(p): \mathfrak{C}(\Delta^1) \to \mathfrak{C}(\Delta^0)\}$. 
\end{Thm}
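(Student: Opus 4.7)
My plan is to deduce the result by transferring the known Quillen equivalence $\mathfrak{C} \dashv \mathfrak{N}$ between $\cat{sSet}_J$ and $\cat{Cat}_{\cat{sSet},B}$ along left Bousfield localizations. The input is Theorem \ref{rigidification} together with the description of the Kan--Quillen model structure as a localization given in Remark \ref{locatp}: namely, $\cat{sSet}_{KQ} = L_{\{p\}} \cat{sSet}_J$, where $p\colon \Delta^1 \to \Delta^0$. The strategy is then to apply the general principle that a Quillen equivalence $(F,G)$ descends to a Quillen equivalence between left Bousfield localizations $L_S \mathcal{M} \rightleftarrows L_{\mathbf{L}F(S)} \mathcal{N}$, provided both localizations exist.

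First, I would verify the hypotheses needed to form the localization $L_S\cat{Cat}_{\cat{sSet},B}$. The Bergner model structure is combinatorial and left proper (this is part of the content of \cite[Theorem A.3.2.4]{lurie2006higher}; left properness follows from the fact that all objects are fibrant along simplicial-set-level weak equivalences between cofibrant objects). Hence the machinery of left Bousfield localization at a set of morphisms with cofibrant domain and codomain applies. Since all objects of $\cat{sSet}_J$ are cofibrant, $\Delta^1$ and $\Delta^0$ are cofibrant, and $\mathfrak{C}$ is left Quillen, so $\mathfrak{C}(\Delta^1)$ and $\mathfrak{C}(\Delta^0)$ are cofibrant in $\cat{Cat}_{\cat{sSet},B}$. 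Consequently $\mathfrak{C}(p)$ represents the derived image $\mathbf{L}\mathfrak{C}(p)$, so localizing at $\mathfrak{C}(p)$ is the same as localizing at $\mathbf{L}\mathfrak{C}(\{p\})$.

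Second, I would invoke the general transfer theorem for Quillen equivalences under Bousfield localization (for instance \cite[Theorem 3.3.20]{hirschhorn2003model} or \cite[Proposition A.3.7.8]{lurie2006higher}): if $F\colon \mathcal{M} \rightleftarrows \mathcal{N}: G$ is a Quillen equivalence between left proper combinatorial model categories and $S$ is a set of morphisms between cofibrant objects in $\mathcal{M}$, then the same adjoint pair gives a Quillen equivalence
\begin{equation*}
F\colon L_S \mathcal{M} \rightleftarrows L_{\mathbf{L}F(S)} \mathcal{N} : G.
\end{equation*}
Applied to $\mathfrak{C} \dashv \mathfrak{N}$ and $S=\{p\}$, this yields exactly a Quillen equivalence
\begin{equation*}
\mathfrak{C}\colon L_{\{p\}} \cat{sSet}_J \rightleftarrows L_{\{\mathfrak{C}(p)\}} \cat{Cat}_{\cat{sSet},B} : \mathfrak{N},
\end{equation*}
and the left-hand side equals $\cat{sSet}_{KQ}$ by Remark \ref{locatp}.

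The main obstacle is to make sure the hypotheses of the localization-transfer theorem are actually met on the simplicial-categories side—that is, to confirm left properness and the combinatorial structure of $\cat{Cat}_{\cat{sSet},B}$ rigorously, and to justify that the derived image of $p$ under $\mathfrak{C}$ is represented by $\mathfrak{C}(p)$ itself. A useful cross-check is to describe the fibrant objects of $L_{\{\mathfrak{C}(p)\}}\cat{Cat}_{\cat{sSet},B}$: they are the fibrant simplicial categories $\mathcal{C}$ for which the map $\mathfrak{N}(\mathcal{C})^{\Delta^0} \to \mathfrak{N}(\mathcal{C})^{\Delta^1}$ is a weak homotopy equivalence, equivalently those whose homotopy category is a groupoid, mirroring the analogous description on $\cat{sSet}_J$ used in Remark \ref{locatp}. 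This matching of fibrant objects under $\mathfrak{N}$ is what guarantees that the derived unit and counit remain weak equivalences after localization, thereby promoting the Quillen equivalence to the localized setting.
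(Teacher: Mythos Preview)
Your proposal is correct and follows essentially the same route as the paper: verify that $\cat{Cat}_{\cat{sSet},B}$ is left proper and combinatorial so the localization exists, then invoke the transfer of Quillen equivalences along left Bousfield localizations (Hirschhorn's Theorem 3.3.20) together with the identification $\cat{sSet}_{KQ}=L_{\{p\}}\cat{sSet}_J$ from Remark \ref{locatp}. Your additional remarks on cofibrancy of $\mathfrak{C}(\Delta^1)$, $\mathfrak{C}(\Delta^0)$ and the description of the fibrant objects are welcome elaborations but not new ingredients.
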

\begin{proof}
The left Bousfield localization $L_S\cat{Cat}_{\cat{sSet}, B}$ exists by Theorem 4.7 in \cite{barwick2010left} since $\cat{Cat}_{\cat{sSet}, B}$ is left proper and combinatorial. The result then follows from Proposition \ref{Bousfieldloc} and Remark \ref{locatp} together with the functoriality of left Bousfield localization along Quillen equivalences as proven in Theorem 3.3.20 in \cite{hirschhorn2009model}. 
\end{proof}
\begin{Rem}
The weak equivalences in $L_S\cat{Cat}_{\cat{sSet}, B}$ are maps of simplicial categories that become weak equivalences in the Bergner model structure after applying a cofibrant replacement functor followed by the localization functor. Fibrant objects are Kan enriched simplicial categories whose homotopy category is a groupoid.
\end{Rem}
Since the adjunction $$\iota: \cat{Gpd}_{\cat{sSet},DK} \rightleftarrows L_S\cat{Cat}_{\cat{sSet},B}: \mathcal{L}$$ is clearly a Quillen equivalence, we obtain another localized version of Theorem \ref{rigidification}.
\begin{Cor} \label{Qequi}
The Quillen adjunction
$$\mathcal{L}\mathfrak{C} : \mathsf{sSet}_{KQ} \rightleftarrows \mathsf{Gpd}_{\mathsf{sSet},DK}: \mathfrak{N} \iota$$
defines a Quillen equivalence. 
\end{Cor}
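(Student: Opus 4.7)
The strategy is to factor the desired equivalence as a composition of two previously established ones. The theorem immediately above gives a Quillen equivalence $\mathfrak{C}: \cat{sSet}_{KQ} \rightleftarrows L_S\cat{Cat}_{\cat{sSet},B}: \mathfrak{N}$, so it suffices to produce a Quillen equivalence $\mathcal{L}: L_S\cat{Cat}_{\cat{sSet},B} \rightleftarrows \cat{Gpd}_{\cat{sSet},DK}: \iota$; composing the two pairs of adjoints then yields precisely $\mathcal{L}\mathfrak{C} \dashv \mathfrak{N}\iota$ between the stated model structures.

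The underlying adjunction $\mathcal{L} \dashv \iota$ between $\cat{Cat}_{\cat{sSet},B}$ and $\cat{Gpd}_{\cat{sSet},DK}$ is already Quillen by Proposition 2.9. To check that it descends to the Bousfield-localized source, I would invoke the universal property of left Bousfield localization: $\mathcal{L}$ descends to a left Quillen functor out of $L_S\cat{Cat}_{\cat{sSet},B}$ exactly when it sends every map of $S = \{\mathfrak{C}(p): \mathfrak{C}(\Delta^1) \to \mathfrak{C}(\Delta^0)\}$ to a weak equivalence. Since $\mathcal{L}\mathfrak{C}(\Delta^1)$ is the free simplicial groupoid on a single morphism (contractible) and $\mathcal{L}\mathfrak{C}(\Delta^0)$ is the terminal groupoid, the map between them is a DK-weak equivalence. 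To upgrade this Quillen adjunction to a Quillen equivalence, I would verify that the derived unit and counit are weak equivalences. The counit $\mathcal{L}\iota \Rightarrow \mathrm{id}$ is a natural isomorphism because $\iota$ is fully faithful. For the unit at a cofibrant $C$, the key fact is that the $L_S$-fibrant objects are precisely the Kan-enriched simplicial categories whose homotopy category is a groupoid, as recorded in the remark preceding the corollary. Since $\iota\mathcal{L}(C)$ is of this form, $C \to \iota\mathcal{L}(C)$ maps into an $L_S$-fibrant object, and Proposition 9.5 of \cite{dwyer1980simplicial} identifies it as a Bergner (hence $L_S$) weak equivalence.

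The main step requiring care is this last verification: Dwyer--Kan's Proposition 9.5 applies directly only when the cofibrant source already has groupoid homotopy category, but a general $L_S$-cofibrant object need not satisfy this. I would handle the general case by factoring the unit through a functorial Bergner-cofibrant replacement lying in the essential image of $\iota$ and then invoking two-out-of-three, using that the target is already $L_S$-local. Composing the two Quillen equivalences then yields the stated Quillen equivalence $\mathcal{L}\mathfrak{C} \dashv \mathfrak{N}\iota$.
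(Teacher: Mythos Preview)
Your approach is essentially the same as the paper's: the corollary is deduced by composing the Quillen equivalence $\mathfrak{C}: \cat{sSet}_{KQ} \rightleftarrows L_S\cat{Cat}_{\cat{sSet},B}: \mathfrak{N}$ from the preceding theorem with a Quillen equivalence $\mathcal{L}: L_S\cat{Cat}_{\cat{sSet},B} \rightleftarrows \cat{Gpd}_{\cat{sSet},DK}: \iota$. The paper in fact gives \emph{less} detail than you do, asserting only that the latter adjunction ``is clearly a Quillen equivalence'' before stating the corollary.

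One small wobble: your final paragraph's patch for the case where the $L_S$-cofibrant source $C$ does not have groupoid homotopy category is not quite right as written. ``A functorial Bergner-cofibrant replacement lying in the essential image of $\iota$'' does not make sense, since a cofibrant replacement of $C$ is again $C$, and $C$ need not be a simplicial groupoid. A cleaner route is to use the description of $L_S$-weak equivalences recorded in the Remark: a map is an $L_S$-weak equivalence iff it becomes a Bergner weak equivalence after Bergner-cofibrant replacement followed by $\mathcal{L}$. For cofibrant $C$ the unit $C \to \iota\mathcal{L}C$ becomes, after applying $\mathcal{L}$, the identity on $\mathcal{L}C$; and applying $\mathcal{L}$ to a Bergner-cofibrant replacement $Q(\iota\mathcal{L}C) \to \iota\mathcal{L}C$ is a DK-equivalence by Dwyer--Kan's Proposition~9.5 (both source and target now have groupoid homotopy category). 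Two-out-of-three then finishes. This is a minor repair; the overall strategy is correct and matches the paper.
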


\printbibliography

\end{document}